\numberwithin{figure}{section}
\numberwithin{equation}{section}
\title{From dimer models to generalized lattice paths}
\author[K.~Shigechi]{Keiichi~Shigechi}
\email{k1.shigechi AT gmail.com}
\date{\today}
\newcommand\tikzpic[2]{
\raisebox{#1\totalheight}{
\begin{tikzpicture}
#2
\end{tikzpicture}
}}
\newtheorem{theorem}[figure]{Theorem}
\newtheorem{example}[figure]{Example}
\newtheorem{lemma}[figure]{Lemma}
\newtheorem{defn}[figure]{Definition}
\newtheorem{prop}[figure]{Proposition}
\newtheorem{cor}[figure]{Corollary}
\newtheorem{remark}[figure]{Remark}
\begin{document}
\begin{abstract}
A recurrence relation of the generating function of the dimer model
of Fibonacci type gives a functional relation for formal power series 
associated to lattice paths such as a Dyck, Motzkin and Schr\"oder path.
In this paper, we generalize the correspondence to the case of 
generalized lattice paths, $k$-Dyck, $k$-Motzkin and $k$-Schr\"oder paths, by modifying 
the recurrence relation of the dimer model.
We introduce five types of generalizations of the dimer model by keeping 
its combinatorial structures.
This allows us to express the generating functions in terms of generalized lattice paths.
The weight given to a generalized lattice path involves 
several statistics such as size, area, peaks and valleys, and 
heights of horizontal steps.
We enumerate the generalized lattice paths by use of the 
recurrence relations and the Lagrange inversion theorem. 
\end{abstract}

\maketitle

\section{Introduction}
The generating functions $G_{n}(r):=G_{n}(m,r,s,u)$ of 
one-dimensional multi-colored dimer models of length $n$ satisfy 
the three-term recurrence relation of Fibonacci type:
\begin{align}
\label{eq:gffib}
G_{n}(r)=f(rs)G_{n-1}(rs)+g(rs)G_{n-2}(rs^2),
\end{align}
where $f(x)$ and $g(x)$ are functions which reflect the combinatorial structures of the models.
In \cite{S21b}, the author studied the relation between the recurrence relation (\ref{eq:gffib})
and combinatorial lattice paths such as Dyck, Motzkin and Schr\"oder paths.
These lattice paths are well-studied combinatorial objects.
For example, the total number of Dyck path of length $n$ is given by the Catalan number 
(see e.g. \cite{Sta97,Sta972,Sta15} and references therein). 
Similarly, Motzkin paths are counted by the Motzkin numbers \cite{Aig98,BirGilWei14,DonSha77,OstderJeu15}, 
and Schr\"oder paths are by the large Schr\"oder numbers \cite{Sch70}.
The generating functions of multi-colored dimer models in one-dimension are expressed 
as formal power series of lattice paths with some weights.
The key observation for this correspondence is to introduce a new formal power series 
$\chi(r)$ defined by 
\begin{align*}
\chi(r):=\lim_{n\rightarrow\infty}\genfrac{}{}{}{}{G_{n-1}(rs)}{G_{n}(r)}.
\end{align*}
The relation (\ref{eq:gffib}) in the large $n$ limit gives the functional relation for $\chi(r)$.
The generating function $G_{n}(r)$ possesses a nice behavior in the large $n$ limit 
such that the identities of Rogers--Ramanujan type \cite{Mac1918,Rog1894,Sch1917} naturally appear.
Even when $n$ is finite, $G_{n}(r)$ is shown to have a similar behavior as $n$ infinity
through the formal power series $\chi(r)$ of the lattice paths with the correction 
coming from the finiteness of the system \cite{S21b}.

In this paper, we generalize the correspondence between the recurrence relation (\ref{eq:gffib}) and the 
combinatorial lattice paths to the case of generalized dimer models and generalized lattice paths.
For this purpose, we first introduce several recurrence relations which arise from 
generalized dimer models explained below. Then, by extending the correspondence mentioned above, 
we study the formal power series associated to generalized lattice paths.
Combinatorial lattice paths appearing in this context are $k$-Dyck paths, 
generalized Motzkin paths, and two types of generalized Schr\"oder paths (type A and type B).
Here, $k$-Dyck paths are well-studied generalized lattice paths, whose total 
number of length $n$ is given by the Fuss--Catalan number \cite{Fus1791}.
The generalized Motzkin and Schr\"oder paths are the paths obtained from 
Motzkin and Schr\"oder paths by a similar generalization as in the 
case of Dyck paths.
 
The correspondences between recurrence relations for dimer models and formal power 
series for  lattice paths  are summarized as below.
We introduce five types of the recurrence relations for the dimer models.
For the dimer model of each type, we have a formal power series of lattice 
paths.
Each formal power series is expressed as a sum of lattice paths with certain weights.
The weights given to a lattice path depends on each type. 
Relevant statistics on a lattice path are size, area, peaks and valleys, heights of horizontal 
steps (if it has), and some patterns in the path. 
The five types are as follows:
\begin{enumerate}
\item 
Type I.
This model is the simplest generalization of the recurrence relation (\ref{eq:gffib}).
The corresponding lattice paths are $k$-Dyck paths, $k$-Schr\"oder paths of type A
and $k$-Motzkin paths.
At the specialization of a variable, the generating function gives 
the numbers of $k$-Dyck paths, i.e., Fuss--Catalan numbers, and generalized large Schr\"oder numbers.
\item
Type II.
In the original dimer model, two connected components of dimers 
have distance at least one site. The model of type II is characterized 
by the following condition: two connected components of dimers 
are separated at least two sites.
The corresponding lattice paths are $k$-Dyck paths.
\item 
Type III.
In this model, the dimer has length more than one. 
The lattice paths are $k$-Dyck paths, and generalized lattice paths 
bijective to $k$-Schr\"oder paths of type $B$.
The weight of a path comes from the statistics area and 
peaks of the path.
A specialization of formal variables gives the cardinality 
of $k$-Schr\"oder paths of type $B$.
\item 
Empty dimers.
The recurrence relation of this type focuses on the ``empty" dimers,
namely, we exchange the roles of dimers and empty sites.
The corresponding lattice paths are $k$-Dyck paths and $k$-Schr\"oder path 
of type B.
The formal power series for $k$-Dyck paths has a pole at $m=1$, 
where the variable $m$ counts the number of colors of dimers. 
The degree of the pole depends on both two parameters $k$ and the system 
size $n$.
The weight of a $k$-Dyck path reflects a certain pattern structure 
in the path.
\item 
The dimer model with distance two.
In this model, dimers of the same color are separated at least two sites.
The generating function for the dimer model satisfies a recurrence relation of degree four.
At the specialization of a variable $m$, the generating function of the lattice 
paths satisfies the recurrence relation for $k$-Dyck paths with simple weights.
For general $m$, to determine the correct weight given to a $k$-Dyck path still remains 
as an open problem (see Section \ref{sec:OP}).
\end{enumerate}
In all five cases, the recurrence relations for the dimer models are generalized 
to preserve their combinatorial structures.
The recurrence relations contain only the terms of the form $G_{n-p}(rs^{p})$ 
with a non-negative integer $p$. 
This allows us to express formal power series, which come from the recurrence 
relations for the dimer models, in terms of $k$-Dyck paths in the large system 
size.
Note that the other two generalized lattice paths, $k$-Motzkin and $k$-Schr\"oder 
paths, contain $k$-Dyck paths as a subset.
Thus, by modifying the weight given to $k$-Dyck paths, the formal power 
series have expressions in terms of $k$-Motzkin and/or $k$-Schr\"oder paths.

Since a formal power series  satisfy a functional relation coming from 
Eq. (\ref{eq:gffib}), we enumerate the generalized lattice paths associated 
to the formal power series by applying the Lagrange inversion theorem.
We give the cardinalities of $k$-Schr\"oder paths of type $A$, that of $k$-Motzkin paths, 
and $k$-Schr\"oder paths of type B respectively in Proposition \ref{prop:kSchA}, 
Proposition \ref{prop:kMot} and Proposition \ref{prop:SchB}.

The paper is organized as follows.
In Section \ref{sec:LP}, we introduce generalized Dyck, Motzkin and 
Schr\"oder paths. To enumerate these combinatorial paths, we also 
recall the Lagrange inversion formula.
In Section \ref{sec:Dimer}, we generalize the recurrence relations (\ref{eq:gffib})
to those of several dimer models.
Then, in Section \ref{sec:gfGLP}, we study the generating functions 
of the generalized lattice paths introduced in Section \ref{sec:LP}, 
which come from the recurrence relations studied in Section \ref{sec:Dimer}. 
Section \ref{sec:OP} is devoted to open problems regarding 
to generating functions of the lattice paths.

\section{Lattice paths}
\label{sec:LP}
In this section, we introduce three types of generalized lattice paths: 
$k$-Dyck, $k$-Motzkin and $k$-Schr\"oder paths.
When $k=1$, these generalized paths coincide with the standard notions 
of Dyck, Motzkin and Schr\"oder paths.

We also recall the Lagrange inversion theorem. This theorem gives the Taylor series of the inverse function 
of an analytic function.

\subsection{Generalized Dyck paths}
\subsubsection{Generalized Dyck paths}
Let $k\ge1$ be a positive integer.
A {\it generalized Dyck path}, or a $k$-Dyck path, of size $n$ is a lattice path 
from $(0,0)$ to $(kn,n)$ such that 
\begin{enumerate}
\item it consists of up steps $(0,1)$ and down steps $(1,0)$, 
\item it never goes below the line $y=x/k$.
\end{enumerate}
When $k=1$, the notion of $k$-Dyck paths coincides with the standard notion 
of Dyck paths.
By replacing an each step in a $k$-Dyck path $\mu$  by $U$ or $D$, we write 
$\mu$ as a word of $U$ and $D$ in one-line notation. 

\begin{defn}
We denote the set of $k$-Dyck paths of size $n$ by $\mathtt{Dyck}(n;k)$.
\end{defn}
For example, $\mathtt{Dyck}(2;2)$ is the set of $2$-Dyck paths:
\begin{align*}
UDDUDD, \quad UDUDDD, \quad UUDDDD.
\end{align*}

It is well-known that the generating function for Dyck paths ($k=1$) is given by
\begin{align}
\label{eq:gfDyck2}
\begin{split}
\sum_{0\le n}|\mathtt{Dyck}(n;1)|x^{n}&=\genfrac{}{}{}{}{1-\sqrt{1-4x}}{2x}, \\
&=1+x+2x^2+5x^3+14x^4+42x^{5}+132x^{6}+\cdots.
\end{split}
\end{align}
This integer sequence appears as A000108 in OEIS \cite{Slo}.
The generating function of $|\mathtt{Dyck}(n;1)|$ can be simply derived by 
writing down the recurrence relation, which is degree two. 
Actually, if we write the left hand side of Eq. (\ref{eq:gfDyck2}) as $y$,
$y$ satisfies $y=1+xy^2$.

Similarly, the number of the $k$-Dyck paths of size $n$ is given by 
the well-known Fuss--Catalan number \cite{Fus1791}:
\begin{align*}
|\mathtt{Dyck}(n;k)|=\genfrac{}{}{}{}{1}{(k+1)n+1}\genfrac{(}{)}{0pt}{}{(k+1)n+1}{n}
=\genfrac{}{}{}{}{1}{kn+1}\genfrac{(}{)}{0pt}{}{(k+1)n}{n}.
\end{align*} 
We denote by $\mu_{0}$ (resp. $\mu_{1}$) the lowest (resp. highest) path 
in $\mathtt{Dyck}(n;k)$.
If we write an up (resp. down) step as $U$ (resp. $D$), 
$\mu_{0}$ and $\mu_{1}$ are expressed as 
\begin{align*}
\mu_{0}:=(UD^{k})^{n}, \qquad \mu_{1}:=U^{n}D^{kn}.
\end{align*}
Since a $k$-Dyck path $\mu$ is above $\mu_{0}$ and below $\mu_{1}$, 
one can consider a region surrounded by $\mu$ and $\mu_{0}$.
Given a $k$-Dyck path $\mu$, we denote by $\mathrm{Y}(\mu_{0}/\mu)$ 
the skew Young diagram of shape $\mu_{0}/\mu$.	
We denote by $|Y(\mu_{0}/\mu)|$ the number of unit boxes above 
$\mu_{0}$ and below $\mu$.

\begin{example}
Let $(n,k)=(4,2)$. We have seven $2$-Dyck paths with $|Y(\mu_{0}/\mu)|=3$.
The seven paths are 
\begin{gather*}
U^{2}D^{3}UD^{3}UD^{2}, \qquad U^{2}D^{4}UDUD^{3}, \qquad UDUDUD^{4}UD^{2}, 
\qquad UDUD^{2}UD^{2}UD^{3}, \\
UDUD^{3}U^{2}D^{4}, \qquad UD^{2}U^{2}D^{3}UD^{3}, \qquad UD^{2}UDUDUD^{4}.
\end{gather*}
The total number $|\mathtt{Dyck}(4;2)|$ of $2$-Dyck paths of size $4$ 
is $55$.
\end{example}

\begin{defn}
A {\it prime} $k$-Dyck path is a $k$-Dyck path such that it never touches 
the line $y=x/k$ except the starting and ending points.
\end{defn}
For example, the $2$-Dyck path $UDDUDD$ is not prime, 
and the $2$-Dyck path $UDUDDD$ is prime.

Note that any $k$-Dyck path $\mu$ can be expressed as a concatenation $\mu=\mu^{p}\circ\mu'$ of
two $k$-Dyck paths $\mu^{p}$ and $\mu'$ where $\mu^{p}$ is a prime $k$-Dyck path.
In this decomposition, $\mu'$ may be an empty path.
One can obtain a functional relation for the generating function of the total number of 
$k$-Dyck paths of size $n$ by use of this decomposition.
Further, by applying Lagrange inversion theorem \ref{thrm:Lag}, we obtain 
the Fuss--Catalan numbers.

\subsubsection{$k+1$-ary trees}
A {$(k+1)$-ary tree} is a rooted tree such that each node has exactly $k+1$ children.
A special cases is $k=1$, and we obtain binary trees.
Similarly, we have ternary trees in the case of $k=2$.
The size of a $(k+1)$-ary is the number of internal nodes (including the root).

We construct a bijection between $k$-Dyck paths size $n$ and $(k+1)$-ary 
trees of size $n$.
Given a $k$-Dyck path $\mu$, we assign a $(k+1)$-ary tree $\mathcal{T}(\mu)$
recursively as follows.

A $k$-Dyck paths of size $0$, which is the empty path, corresponds to 
the empty tree.
For $n\ge1$, a $k$-Dyck path $\mu$ has the following form:
\begin{align}
\label{eq:mudecomp}
\mu=U\circ \mu_1\circ D \circ \mu_2\circ D\circ \cdots \circ \mu_{k} D\circ \mu_{k+1}, 
\end{align}
where $\mu_{i}$, $1\le i\le k+1$, are $k$-Dyck paths.
Recall that a $(k+1)$-ary tree has $(k+1)$ children.
Let $\mathcal{T}_{i}:=\mathcal{T}(\mu_{i})$ be a $(k+1)$-ary tree 
corresponding to the path $\mu_{i}$.
Then, the $(k+1)$-ary tree $\mathcal{T}(\mu)$ is obtained from the $(k+1)$-ary 
tree $T$ of size one by attaching $\mathcal{T}_{i}$, $1\le i\le k+1$, to 
the $i$-th (from left) children of $T$.

In the decomposition (\ref{eq:mudecomp}), we have a prime $k$-Dyck path
if and only if $\mu_{k+1}=\emptyset$.

\begin{remark}
\label{rmk:DyckTree}
Since the decomposition (\ref{eq:mudecomp}) of $\mu$ is unique, the map from a $k$-Dyck 
path to a $(k+1)$-ary tree has an obvious inverse and it is unique.
Therefore, we have a bijection between them.
\end{remark}

An example of ternary tree corresponding to a $2$-Dyck path is shown 
in Figure \ref{fig:extree}.
The corresponding $2$-Dyck path is $UDUD^2UD^3UD^2$, and 
its decomposition of the form (\ref{eq:mudecomp}) is given by $UD(UD^2UD^2)D(UD^2)$.
\begin{figure}[ht]
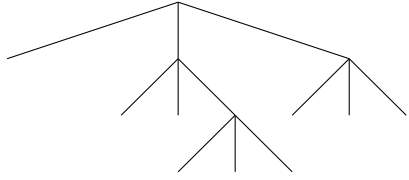

\tikzpic{-0.5}{[scale=0.5]
\coordinate
	child{coordinate}
	child[missing]
		child[missing]
	child{coordinate
		child{coordinate}
		child{coordinate}
		child{coordinate
			child{coordinate(c5)}
			child{coordinate(c6)}
			child{coordinate(c7)}	
		}
	}
	child[missing]
	child[missing]
	child{coordinate(c4)
		child{coordinate(c9)}
		child{coordinate(c10)}
		child{coordinate(c11)}
	};
}
\caption{A ternary tree corresponding to the $2$-Dyck path $UDUD^2UD^3UD^2$}
\label{fig:extree}
\end{figure}
Therefore, the $2$-Dyck path is not prime.

\begin{defn}
We denote by $\mathtt{Tree}(n;k)$ the set of $(k+1)$-ary trees of size $n$.
\end{defn}
From Remark \ref{rmk:DyckTree}, it is clear that 
the number $|\mathtt{Tree}(n;k)|$ is equal to the number of $k$-Dyck paths 
of size $n$, i.e., the Fuss--Catalan number.

\subsection{Generalized Motzkin paths}
A {\it Motzkin path} is a lattice path from $(0,0)$ to $(n,0)$ in 
the first quadrant such that each step is either an up step $U=(1,1)$, 
a down step $D=(1,-1)$ or a horizontal step $(1,0)$.
As in the case of Dyck paths, Motzkin paths never go below the horizontal 
line $y=0$.
For example, we have four Motzkin paths of size $3$:
\begin{gather*}
HHH, \qquad UDH, \qquad UHD, \qquad HUD.
\end{gather*}

A {\it generalized Motzkin path}, or simply $k$-Motzkin path, of 
size $n$ is a lattice path from $(0,0)$ to $(n,0)$ 
in the first quadrant such that 
each step is either an up step $U=(1,k)$, a down step $(1,-1)$,
or a horizontal path $(1,0)$.
\begin{example}
We have eleven $2$-Motzkin paths of size $5$:
\begin{gather*}
H^{5}, \qquad UDDHH, \qquad UDHDH, \qquad UHDDH, \qquad HUDDH, \\
UDHHD, \qquad UHDHD, \qquad HUDHD, \qquad UHHDD, \\ 
HUHDD, \qquad HHUDD.
\end{gather*}
\end{example}

\begin{defn}
The set of $k$-Motzkin paths of size $n$ is denoted by 
$\mathtt{Mot}(n;k)$.
\end{defn}

The generating function for Motzkin paths is given by  
\begin{align*}
\sum_{0\le n}|\mathtt{Mot}(n,1)|x^{n}
&=\genfrac{}{}{}{}{1-x-\sqrt{1-2x-3x^2}}{2x^2}, \\
&=1+x+2x^2+4x^3+9x^4+21x^5+51x^6+127x^{7}+\cdots.
\end{align*}
This integer sequence appears as A001006 in OEIS \cite{Slo}.
The generating function $y(x)$ of Motzkin paths satisfies $x^2y^2+(x-1)y+1=0$ where 
$y(x):=\sum_{0\le n}|\mathtt{Mot}(n,1)|x^{n}$.
The explicit formula for $|\mathtt{Mot}(n,1)|$ is given by 
\begin{align*}
|\mathtt{Mot}(n,1)|
=\genfrac{}{}{}{}{1}{n+1}
\sum_{j=0}^{n}\genfrac{(}{)}{0pt}{}{n+1}{n-j}\genfrac{(}{)}{0pt}{}{n-j}{j}.
\end{align*}

The number $|\mathtt{Mot}(n;k)|$ of $k$-Motzkin paths of 
size $n$ will be given in Proposition \ref{prop:kMot} by applying 
the Lagrange inversion theorem to the functional relation for 
$k$-Motzkin paths.

\subsection{Generalized Schr\"oder paths}
We first recall the definition of Schr\"oder paths, and generalize 
the notion of Schr\"oder paths to $k$-Schr\"oder paths.

We will introduce two types of generalizations of Schr\"oder path.
We call these two types type A and type B.

\subsubsection{Schr\"oder paths}
A {\it Schr\"oder path} of size $n$ is a lattice path from 
(0,0) to $(n,n)$ such that 
\begin{enumerate}
\item it consists of up step $U=(0,1)$, down step $D=(1,0)$, and 
horizontal step $H=(1,1)$,
\item it never goes below $y=x$.
\end{enumerate}
For simplicity, we write $U$, $D$ and $H$ for an up, down and horizontal
step respectively. 
We write a Schr\"oder path as a word in $\{U,D,H\}$.
\begin{defn}
We define $\mathtt{Sch}(n)$ as the set of Schr\"oder paths of size $n$.	
\end{defn}
The generating function of the numbers $|\mathtt{Sch}(n)|$ 
is given by 
\begin{align*}
\sum_{0\le n}|\mathtt{Sch}(n)|x^{n}&=\genfrac{}{}{}{}{1-x-\sqrt{x^2-6x+1}}{2x}, \\
&=1+2x+6x^2+22x^3+90x^4+394x^5+1806x^{6}+\cdots. 
\end{align*}
The number $|\mathtt{Sch}(n)|$ is well-known as large Schr\"oder numbers.
The generating function $y(x)=\sum_{0\le n}|\mathtt{Sch}(n)|x^{n}$ satisfies 
$xy^2-(1-x)y+1=0$.
This integer sequence appears as A006318 in OEIS \cite{Slo}.
The explicit expression of $|\mathtt{Sch}(n)|$ is given by 
\begin{align}
\label{eq:Sch}
|\mathtt{Sch}(n)|
=\genfrac{}{}{}{}{1}{n}\sum_{j=0}^{n}\genfrac{(}{)}{0pt}{}{n}{j}\genfrac{(}{)}{0pt}{}{n+j}{n-1}.
\end{align}

For example, we have six Schr\"oder paths of size $2$: 
\begin{gather*}
HH, \qquad HUD, \qquad UHD, \qquad UDH, \\
UUDD, \qquad UDUD.
\end{gather*}

\subsubsection{Generalized Schr\"oder paths of type $A$ and $B$}
Below, we generalized the notion of Schr\"oder paths by combining the notion
of $k$-Dyck paths with it.
We first introduce a $k$-Schr\"oder path of type $A$.
 
\begin{defn}[$k$-Schr\"oder paths of type $A$]
A {\it Fuss--Schr\"oder path of type A}, or simply {\it $k$-Schr\"oder path of type $A$} of size $n$ is 
a lattice path from $(0,0)$ to $(kn,n)$ such that 
\begin{enumerate}
\item it consists of an up step $U=(0,1)$, a down step $D=(1,0)$, and 
a horizontal step $H=(k,1)$, 
\item it never goes below $y=x/k$.
\end{enumerate}
We denote by $\mathtt{Sch}^{A}(n;k)$ the set of $k$-Schr\"oder paths of type A of size $n$.
\end{defn}

\begin{example}
\label{ex:SchA}
Set $(n,k)=(2,2)$. Then, we have eight $2$-Schr\"oder paths of type A of size $2$:
\begin{gather*}
HH, \qquad HUDD, \qquad UHDD, \qquad UDHD, \qquad UDDH, \\
UDDUDD, \qquad UDUDDD, \qquad UUDDDD.  
\end{gather*}
\end{example}

The lowest path $\mu_{0}$ and the highest path $\mu_1$ are given by
\begin{gather*}
\mu_{0}=H^{n}, \qquad \mu_{1}=U^{n}D^{kn}.
\end{gather*}

The number $|\mathtt{Sch}^{A}(n;k)|$ of $k$-Schr\"oder paths of type A 
of size $n$ is given in Proposition \ref{prop:kSchA} by use of the functional 
relation for $\mathtt{Sch}^{A}(n;k)$ and the Lagrange inversion theorem.

We give another generalization of Schr\"oder paths which we call type $B$.

\begin{defn}[$k$-Schr\"oder paths of type B]
A {\it Fuss--Schr\"oder path of type B}, or simply {\it $k$-Schr\"oder path 
of type B} of size $n$ is a lattice path from $(0,0)$ to $(kn,n)$ such that 
\begin{enumerate}
\item it consists of an up step $U=(0,1)$, a down step $D=(1,0)$ and 
a horizontal step $H=(1,1)$.
\item the path never goes below $y=x/k$.
\end{enumerate}

We denote by $\mathtt{Sch}^{B}(n;k)$ the set of $k$-Schr\"oder paths of type B of 
size $n$.
\end{defn}

The $k$-Schr\"oder paths of type B is studied in \cite{YanJia21} as $k$-Schr\"oder
paths.

\begin{example}
\label{ex:SchB}
Set $(n,k)=(2,2)$. We have ten $2$-Schr\"oder paths of type B:
\begin{gather*}
UDDUDD, \qquad HDUDD, \qquad UDDHD, \qquad HDHD, \\
UDUDDD, \qquad HUDDD, \qquad UDHDD, \qquad HHDD, \\
UUDDDD, \qquad UHDDD.
\end{gather*}
\end{example}
The lowest path $\mu_0$ and the highest path $\mu_1$ of type B are given by 
\begin{gather*}
\mu_{0}=(HD^{k-1})^{n}, \qquad \mu_{1}=U^{n}D^{kn}.
\end{gather*}
The cardinality of the set $\mathtt{Sch}^{B}(n;k)$ is given in 
Proposition \ref{prop:SchB} by use of the Lagrange inversion theorem.

\begin{remark}
Both types of $k$-Schr\"oder paths of size $n$ contain the set $\mathtt{Dyck}(n;k)$ 
as a subset. It is a subset of $k$-Schr\"oder paths without horizontal steps.
\end{remark}

Given a $k$-Schr\"oder path of type $B$, we denote by $Y(\mu_{0}/\mu)$ the 
skew shape surrounded by two paths $\mu_{0}$ and $\mu$.
Then, the area of $Y(\mu_{0}/\mu)$ is denoted by $|Y(\mu_{0}/\mu)|$.
Since we have a diagonal step, namely, a horizontal step, 
the value $|Y(\mu_{0}/\mu)|$ may be a half-integer.

For example, when $\mu=UHDDD$, we have 
$|Y(\mu_{0}/\mu)|=5/2$.

\begin{remark}
In the case of $k=1$, the both $k$-Schr\"oder paths of type A and type B 
give the same set of Schr\"oder paths. 
For a general $k\ge2$, Schr\"oder paths of type A and type B give 
different sets (see Example \ref{ex:SchA} and Example \ref{ex:SchB}).
\end{remark}

\subsection{Lagrange inversion theorem}
The Lagrange inversion theorem gives the inverse function of an analytic function.
In this paper, we use two forms of Lagrange inversion theorem.
\begin{theorem}[Lagrange inversion theorem, Theorem 1.2.4 in \cite{GouJac1983}]
\label{thrm:Lag}
Suppose that a function $f(x)$ satisfies the functional equation 
\begin{align*}
f(x)=1+xH(f(x)),
\end{align*}
where $H(x)$ is a polynomial of $x$. Then,
\begin{align*}
[x^{n}]f(x)=\genfrac{}{}{}{}{1}{n}[y^{n-1}]\left(H(1+y)\right)^{n}.
\end{align*}
Similarly, if $f(x)$ satisfies 
\begin{align*}
x=\genfrac{}{}{}{}{f(x)}{\phi(f(x))},
\end{align*}
where $\phi(x)$ is analytic with $\phi(0)\neq0$.
Then, we have 
\begin{align*}
[x^{n}]f(x)=\genfrac{}{}{}{}{1}{n}[y^{n-1}]\phi(y)^{n}.
\end{align*}
\end{theorem}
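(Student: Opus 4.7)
The plan is to reduce the first statement to the second, and then prove the second statement by residue calculus (contour integral change of variables), which is the standard route in \cite{GouJac1983}.

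For the reduction, I set $g(x):=f(x)-1$. The functional equation $f=1+xH(f)$ becomes $g=xH(1+g)$, which I rewrite as $x=g/\phi(g)$ with $\phi(y):=H(1+y)$. Since $H$ is a polynomial, $\phi$ is analytic; moreover $\phi(0)=H(1)\ne 0$ is needed, which I would either state as an additional hypothesis or verify on the convention that the dimer-model generating functions on which this theorem is later applied always satisfy $H(1)\ne 0$. Then $[x^n]f(x)=[x^n]g(x)$ for $n\ge 1$, so the first identity follows directly from the second applied to $g$ with $\phi(y)=H(1+y)$.

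For the second statement, I work with formal Laurent series (or, equivalently, contour integrals around $0$). Because $\phi(0)\ne 0$, the relation $x=y/\phi(y)$ has a compositional inverse $y=f(x)$ with $f(0)=0$ and $f'(0)=1/\phi(0)$, so the change of variables $y\mapsto x=y/\phi(y)$ is a local biholomorphism near $0$. The coefficient is extracted by
\begin{align*}
[x^n]f(x)=\frac{1}{2\pi i}\oint \frac{f(x)}{x^{n+1}}\,dx.
\end{align*}
Substituting $x=y/\phi(y)$, $f(x)=y$, and $dx=\bigl(\phi(y)-y\phi'(y)\bigr)\phi(y)^{-2}\,dy$, I rewrite the integrand as $\phi(y)^{n-1}\bigl(\phi(y)-y\phi'(y)\bigr)/y^n$, which expands to
\begin{align*}
[x^n]f(x)=[y^{n-1}]\phi(y)^n-[y^{n-2}]\bigl(y\,\phi(y)^{n-1}\phi'(y)\bigr)\cdot\tfrac{1}{1}.
\end{align*}

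The main technical step, and the only nontrivial point, is eliminating the second coefficient. I use the identity $\phi(y)^{n-1}\phi'(y)=\tfrac{1}{n}\tfrac{d}{dy}\phi(y)^n$ together with integration by parts (equivalently, the formal-residue fact $\mathrm{Res}_y\,F'(y)y^{-m}=m\,\mathrm{Res}_y F(y)y^{-m-1}$ applied to $F=\phi^n$ and $m=n-1$). This converts the second term into $\tfrac{n-1}{n}[y^{n-1}]\phi(y)^n$, so the two pieces combine to give $\tfrac{1}{n}[y^{n-1}]\phi(y)^n$, which is the claim. The one small obstacle is making this integration-by-parts step fully rigorous in the formal setting used throughout the paper; I would phrase it as a formal residue computation so that no convergence hypotheses on $\phi$ beyond $\phi(0)\ne 0$ are needed, matching the level of generality in \cite{GouJac1983}.
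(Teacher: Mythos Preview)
The paper does not supply a proof of this theorem; it is quoted as Theorem~1.2.4 of \cite{GouJac1983} and stated without argument. So there is no ``paper's own proof'' to compare against.

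Your argument is the standard residue-calculus derivation of Lagrange inversion and is correct. The reduction of the first form to the second via $g=f-1$ and $\phi(y)=H(1+y)$ is the right move, and your observation that one needs $H(1)\ne0$ to have $\phi(0)\ne0$ is a genuine omission in the paper's statement of the theorem (though harmless in practice, since every application later in the paper has $H(1)\ne0$). The contour-integral computation and the integration-by-parts step $\phi^{n-1}\phi'=\tfrac{1}{n}(\phi^n)'$ are handled correctly; the formal-residue phrasing you suggest is exactly the clean way to make this rigorous without analytic hypotheses. The only cosmetic slip is the stray ``$\cdot\tfrac{1}{1}$'' in the displayed line for the second coefficient, which should simply be dropped.
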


\section{Dimer models}
\label{sec:Dimer}
\subsection{Dimer model of Fibonacci type}
We first recall the definition of the multi-colored dimer model of Fibonacci type
following \cite{S21b}.
Let $v_{0}, v_{1},\ldots,v_{n}$ be vertices of a segment $[0,n]$.
A {\it dimer at position $i$} is an edge connecting two 
vertices $v_{i-1}$ and $v_{i}$.
A dimer configuration on a segment satisfies
\begin{enumerate}[(C1)]
\item there exists at most one dimer at position $i$,
\item if there exists a dimer $d$ with a color $c$ at position $i$, 
the dimers adjacent to $d$ have color $c'$ with $c'\neq c$.
\end{enumerate}
If dimers are adjacent to each other, we say that the dimers form 
a connected component. 
By definition, the distance between two connected components is at least 
one.
By definition of a dimer configuration, the maximum connected component
has $n$ dimers if we have $m\ge2$ colors, and has a single dimer if we have $m=1$ colors.

We denote by $\mathcal{D}_{n}$ the set of multi-colored dimer configurations 
on a segment $[0,n]$.

We introduce four formal variables associated to a dimer configuration.
\begin{defn}
The variable $m$ is the number of colors. 
The exponents of variables $r$, $s$, and $u$ represents the number of dimers, 
the sum of the positions of dimers, and the number of connected components, 
respectively.
\end{defn}

For the maximum connected component, we may have $n$ dimers with $n\ge1$.
We have $m(m-1)^{n-1}$ ways to put colors $n$ on dimers, and the weight 
of the component is given by $r^{n}$, $s^{n(n+1)/2}$ and $u^{1}$.
Thus in total, the maximum connected component gives the factor 
$m(m-1)^{n-1}r^{n}s^{n(n+1)/2}u^{1}$.

More in general, we define the generating function for dimer configurations 
$d\in\mathcal{D}_{n}$ on a segment.
\begin{defn}
Let $a$ be the number of connected components, $b$ be the number of dimers, 
and $c$ is the sum of the positions of dimers.
Then, we define the weight $\mathrm{wt}(d)$ of a dimer configuration $d$ as
\begin{align*}
\mathrm{wt}(d):=m^{a}(m-1)^{b-a}r^{b}s^{c}u^{a}.
\end{align*}
The generating function is defined as 
\begin{align*}
G_{n}(m,r,s,u):=\sum_{d\in\mathcal{D}_{n}}\mathrm{wt}(d).
\end{align*}
\end{defn}

\begin{example}
First few generating functions are  
\begin{align*}
G_{1}(m,r,s,u)&=1+mrsu, \\
G_{2}(m,r,s,u)&=1+mru(s+s^2)+m(m-1)r^{2}us^{3}, \\
G_{3}(m,r,s,u)&=1+rmu(s+s^2+s^3)+r^{2}m(m-1)u(s^{3}+s^{5})+r^2m^2u^2s^{4}.
\end{align*}
\end{example}

\begin{defn}
\label{defn:fg}
We define two functions $f(x)$ and $g(x)$ by
\begin{align*}
f(x):=1+x(m-1), \qquad g(x):=x(m(u-1)+1).
\end{align*}
\end{defn}

\begin{theorem}
\label{thrm:gfDyck}
The generating function $G_{n}(m,r,s)$ satisfies the 
recurrence relation of Fibonacci type 
\begin{align}
\label{eq:gfDyck}
G_{n}(m,r,s,u)=f(rs)G_{n-1}(m,rs,s,u)+g(rs)G_{n-2}(m,rs^2,s,u),
\end{align}
where $f(x)$ and $g(x)$ are defined in Definition \ref{defn:fg}.
The initial conditions are given by $G_{0}(m,r,s,u)=G_{-1}(m,r,s,u)=1$.	
\end{theorem}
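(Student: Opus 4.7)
The plan is to prove the recurrence combinatorially by decomposing $\mathcal{D}_n$ according to the occupancy of positions $1$ and $2$, after first identifying the shifted generating functions on the right-hand side with natural subsets of $\mathcal{D}_n$.

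First I would establish the shift interpretation. Since the exponent of $s$ in $\mathrm{wt}(d)$ is the sum of dimer positions, the substitution $r\mapsto rs^p$ in $G_{n-p}(m,r,s,u)$ has the effect of shifting every dimer position by $p$. Consequently, $E_n:=G_{n-1}(m,rs,s,u)$ is the generating function of configurations in $\mathcal{D}_n$ with no dimer at position $1$, and $F_n:=G_{n-2}(m,rs^2,s,u)$ is the generating function of configurations in $\mathcal{D}_n$ with no dimer at positions $1$ or $2$.

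Next I would partition $\mathcal{D}_n$ into three blocks: (a) no dimer at position $1$; (b) a dimer at position $1$ but not at position $2$; (c) dimers at both positions $1$ and $2$. Block (a) contributes exactly $E_n$. In block (b) the dimer at position $1$ is forced to form an isolated connected component (nothing lies to its left beyond $v_0$ and position $2$ is empty), contributing a factor $mrsu$ from the $m$ color choices, the weights $r^1,s^1$ and the new-component factor $u$, while the remainder of the configuration is supported on positions $3,\ldots,n$; this block therefore contributes $mrsu\cdot F_n$.

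The technically delicate step is block (c). Here I would use the map $d'\mapsto d$ that deletes the dimer at position $1$. The image $d$ has no dimer at position $1$ and has a dimer at position $2$, so $d\in E_n\setminus F_n$; conversely, every such $d$ has exactly $m-1$ preimages because, by condition (C2), the color of the reinstated dimer at position $1$ must differ from the color of the dimer at position $2$. To compute the weight ratio I observe that the connected component containing position $2$ in $d$ simply acquires one new leftmost dimer in $d'$, so the component count $a$ is unchanged while $b$ and $c$ each increase by $1$; in the weight $m^{a}(m-1)^{b-a}r^{b}s^{c}u^{a}$ summed over color assignments, this produces a net factor $(m-1)rs$ (one $(m-1)$ absorbing both the new color choice and the update of the exponent of $m-1$). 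The main obstacle is precisely this bookkeeping: one must verify that the color factor $m^a(m-1)^{b-a}$, which already sums over all admissible colorings, transforms correctly under the insertion of the leftmost dimer. Summing over $d\in E_n\setminus F_n$ yields $(m-1)rs\cdot(E_n-F_n)$. Adding the three blocks gives
\begin{align*}
G_n &= E_n+(m-1)rs(E_n-F_n)+mrsu\,F_n \\
    &= \bigl(1+rs(m-1)\bigr)E_n+rs\bigl(m(u-1)+1\bigr)F_n \\
    &= f(rs)G_{n-1}(m,rs,s,u)+g(rs)G_{n-2}(m,rs^2,s,u).
\end{align*}
The initial conditions $G_0=1$ (empty configuration) and $G_{-1}=1$ (by convention) would then be recorded, and as a sanity check the recurrence at $n=1$ reproduces $G_1=1+mrsu$ from the example.
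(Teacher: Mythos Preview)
Your proof is correct and uses essentially the same decomposition as the paper: split $\mathcal{D}_n$ according to the occupancy of positions $1$ and $2$, identify the pieces with shifted generating functions via $r\mapsto rs^p$, and collect terms. The paper packages the same idea through auxiliary generating functions $z^{\emptyset}_n$ (no dimer at position $1$) and $z^{c}_n$ (dimer of a fixed color at position $1$), stated as a separate lemma and then eliminated algebraically; your blocks (a), (b), (c) correspond exactly to $z^{\emptyset}_n$, $rsu\,z^{\emptyset}_{n-1}(rs)$, and $(m-1)rs\,z^{c}_{n-1}(rs)$, and the paper's extra abstraction is there only because these auxiliary functions are reused for the other dimer models later in the paper.
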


To prove Theorem \ref{thrm:gfDyck}, we introduce two types of 
generating functions $z^{X}_{n}(m,r,s,u)$ with $X\in\{\emptyset,c\}$ 
which form $G_{n}(m,r,s,u)$.

We start from the definitions of $z^{\emptyset}$ and $z^{c}$.	
\begin{defn}
\label{defn:z}
Let $z^{\emptyset}_{n}(m,r,s,u)$ be the generating function of dimer 
configurations such that there exists no dimers at position one.
On the contrary, let $z^{c}_{n}(m,r,s,u)$ be the generating 
function of dimer configurations such that there exits a dimer of color $c$
at position one. 
\end{defn}

\begin{lemma}
\label{lemma:Gz}
The generating functions $G_{n}(m,r,s,u)$, $z^{\emptyset}_{n}(m,r,s,u)$ 
and $z^{c}_{n}(m,r,s,u)$ satisfy the following system of functional equations:
\begin{align}
\label{eq:Gninz}
&G_{n}(m,r,s,u)=z^{\emptyset}_{n}(m,r,s,u)+m z^{c}_{n}(m,r,s,u), \\
\label{eq:zemp}
&z^{\emptyset}_{n}(m,r,s,u)=G_{n-1}(m,rs,s,u), \\
\label{eq:zc}
&z^{c}_{n}(m,r,s,u)=rsu z^{\emptyset}_{n-1}(m,rs,s,u)+(m-1)rs z^{c}_{n-1}(m,rs,s,u).
\end{align}
\end{lemma}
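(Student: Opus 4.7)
The plan is to establish the three equations by combinatorial analysis of dimer configurations at the leftmost one or two positions. For each equation, I would partition the relevant set of configurations, set up a bijection with configurations on a shorter segment, and carefully track how each of the four statistics encoded by the weight $\mathrm{wt}(d)$ transforms under the bijection.

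Equation \eqref{eq:Gninz} is immediate from Definition \ref{defn:z}: every $d\in\mathcal{D}_n$ either has no dimer at position one (contributing to $z^{\emptyset}_n$) or has a dimer of one of $m$ possible colors at position one. By the symmetry of the weight under permutations of the color set, fixing any particular color $c$ yields the same generating function $z^c_n$, so the second class contributes $m\cdot z^c_n$.

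For equation \eqref{eq:zemp}, I would exhibit the obvious bijection between configurations on $[0,n]$ with no dimer at position one and configurations on the subsegment $[1,n]$, reindexed as a segment of length $n-1$ by shifting every position down by one. The shift preserves the number of dimers $b$ and the number of connected components $a$, so the factors $m^{a}(m-1)^{b-a}r^{b}u^{a}$ carry over unchanged. It affects only the $s$-weight: each dimer originally at position $i$ is now at position $i-1$, which, summed over all $b$ dimers, multiplies the $s$-weight by $s^{-b}$; equivalently, it replaces $r$ by $rs$ in the weight of the shifted configuration. This identifies $z^{\emptyset}_n$ with $G_{n-1}(m,rs,s,u)$.

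For equation \eqref{eq:zc}, I would further condition on the occupancy at position two, given that position one carries a dimer of color $c$. In the first case (no dimer at position two), the dimer at position one is an isolated component contributing the factor $rsu$ directly, and the rest of the configuration, viewed on the shifted segment, has no dimer at the new position one and contributes $z^{\emptyset}_{n-1}(m,rs,s,u)$. In the second case (a dimer of color $c'\ne c$ at position two), the component continues, so the dimer at position one contributes only $rs$ (its $u$-factor will be supplied by the shifted configuration). Summing over the $m-1$ admissible colors $c'$ and invoking the color symmetry $z^{c'}_{n-1}=z^{c}_{n-1}$ produces $(m-1)rs\cdot z^{c}_{n-1}(m,rs,s,u)$. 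The main obstacle, and the step needing the most care, is verifying that this reallocation of the $u$- and color-factors is consistent: in the shifted segment the dimer at its new position one is treated as starting a fresh component, thereby supplying the single $u$ of the combined component that spans original positions one and two, while the multiplicity $(m-1)$ in front of $z^{c}_{n-1}$ compensates for the fact that the color of that continuation, free in the original weight, becomes fixed inside $z^{c'}_{n-1}$. Once this bookkeeping is confirmed, summing the two cases yields \eqref{eq:zc}.
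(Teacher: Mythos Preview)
Your proposal is correct and follows essentially the same approach as the paper's proof: partitioning according to the occupancy and color at position one (and, for \eqref{eq:zc}, at position two), then using the shift $r\mapsto rs$ to identify the tail with a configuration on a shorter segment. The paper's own proof is considerably terser---it simply asserts that \eqref{eq:Gninz} and \eqref{eq:zemp} are obvious from the definitions and that \eqref{eq:zc} reflects the minimum-distance condition between components---whereas you spell out the bijections and the bookkeeping of the $u$- and $(m-1)$-factors explicitly, but the underlying argument is the same.
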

\begin{proof}
The equality (\ref{eq:Gninz}) is obvious from the definitions of $z^{\emptyset}$ and $z^{c}$.
The factor $m$ comes from the number of possible colors at position one.

Observe that the shift $r\rightarrow rs$ increases the position of a dimer by one.
Thus, the equality (\ref{eq:zemp}) follows.

Since the distance of two connected components is at least one, $z^{c}_{n}$ 
satisfies Eq. (\ref{eq:zc}). Note that we have no $u$ in the second term of the right hand 
side of Eq. (\ref{eq:zc}) since the second term corresponds to increasing the size of 
the connected component by one.
\end{proof}

\begin{proof}[Proof of Theorem \ref{thrm:gfDyck}]
We will show that $G_{n}(m,r,s,u)$ satisfies the recurrence relation of Fibonacci type 
by use of Lemma \ref{lemma:Gz}.

From Eqs. (\ref{eq:Gninz}) and (\ref{eq:zemp}), we have 
\begin{align*}
m z^{c}_{n}(m,r,s,u)=G_{n}(m,r,s,u)-G_{n-1}(m,rs,s,u).
\end{align*}
From Eqs. from (\ref{eq:Gninz}) to (\ref{eq:zc}), the generating function $G_{n}(m,r,s,u)$ satisfies
\begin{align*}
G_{n}(m,r,s,u)&=z^{\emptyset}_{n}(m,r,s,u)+ m z^{c}_{n}(m,r,s,u), \\
&=G_{n-1}(m,rs,s,u)+mrsu G_{n-2}(m,rs^{2},s,u) \\
&\quad+(m-1)rs\left(G_{n-1}(m,rs,s,u)-G_{n-2}(m,rs^{2},s,u)\right), \\
&=f(rs)G_{n-1}(m,rs,s,u)+g(rs)G_{n-2}(m,rs^2,s,u),
\end{align*}
which completes the proof.
\end{proof}

\subsection{Generating function for empty dimers}
In this subsection, we consider the generating function of empty dimers.
Here, an empty dimer of position $i$ means that there is no dimer at position $i$.
We introduce two formal variables $p$ and $q$, where $p$ counts the total number 
of empty dimers, and the exponent of $q$ is the sum of the positions of 
empty dimers.
We keep the two variables $m$ and $u$.
We define $G_{n}^{\ast}(m,p,q)$ as a generating function of empty dimers 
satisfying the two conditions (C1) and (C2).
Note that we have a single color for empty dimers, however, a dimer 
may have $m$ possible colors.
The generating function $G_{n}^{\ast}(m,p,q)$ focuses on empty sites rather than 
dimers.

\begin{prop}
The generating function $G_{n}^{\ast}(m,p,q)$ satisfies the recurrence relation
\begin{align*}
G_{n}^{\ast}(m,p,q)=(pq+m-1)G_{n-1}^{\ast}(m,pq,q)+pq^{2}(m(u-1)+1)G_{n-2}^{\ast}(m,pq^2,q),
\end{align*}
with the initial conditions $G_{0}^{\ast}(m,p,q)=1$ and $G_{1}^{\ast}(m,p,q)=mu+pq$.
\end{prop}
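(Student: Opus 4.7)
The plan is to mirror the proof of Theorem~\ref{thrm:gfDyck}, replacing the dimer-side auxiliary series by their empty-dimer analogues. I introduce $z_n^{\emptyset,\ast}(m,p,q,u)$, the generating function of configurations in $\mathcal{D}_n$ with no dimer at position one, and $z_n^{c,\ast}(m,p,q,u)$, that of configurations carrying a dimer of a fixed color $c$ at position one, in direct analogy with Definition~\ref{defn:z}. Summing over the $m$ possible colors at position one gives
\[
G_n^{\ast}(m,p,q,u) \;=\; z_n^{\emptyset,\ast}(m,p,q,u) + m\, z_n^{c,\ast}(m,p,q,u).
\]

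The key bookkeeping observation, analogous to the fact used in the proof of Lemma~\ref{lemma:Gz} that a shift of every dimer position by one is encoded by $r \mapsto rs$, is that a shift of every empty-site position by one is encoded by $p \mapsto pq$: since the exponent of $p$ counts empty sites and the exponent of $q$ records the sum of their positions, such a shift multiplies the weight by $q$ raised to the number of empty sites. From this I will derive
\begin{align*}
z_n^{\emptyset,\ast}(m,p,q,u) &= pq \cdot G_{n-1}^{\ast}(m,pq,q,u), \\
z_n^{c,\ast}(m,p,q,u) &= u \cdot z_{n-1}^{\emptyset,\ast}(m,pq,q,u) + (m-1)\, z_{n-1}^{c,\ast}(m,pq,q,u).
\end{align*}
The first identity records the factor $pq$ from the empty site at position one together with the shifted configuration on positions $2,\ldots,n$. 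The second splits on position two: either it is empty, so that the component at position one is isolated and contributes one explicit factor $u$, or it carries a dimer of one of the $m-1$ admissible other colors, extending the component, in which case the factor $u$ for this component is carried recursively inside $z_{n-1}^{c,\ast}$.

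To conclude, I use $m z_n^{c,\ast} = G_n^{\ast} - z_n^{\emptyset,\ast}$ evaluated at $(n,p) \mapsto (n-1, pq)$ to obtain
\[
m\, z_{n-1}^{c,\ast}(m,pq,q,u) \;=\; G_{n-1}^{\ast}(m,pq,q,u) - pq^{2}\, G_{n-2}^{\ast}(m,pq^{2},q,u),
\]
substitute this together with $z_{n-1}^{\emptyset,\ast}(m,pq,q,u) = pq^{2}\, G_{n-2}^{\ast}(m,pq^{2},q,u)$ into $G_n^{\ast} = z_n^{\emptyset,\ast} + m z_n^{c,\ast}$, and collect coefficients. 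The coefficient of $G_{n-1}^{\ast}(m,pq,q,u)$ becomes $pq + m - 1$, and that of $G_{n-2}^{\ast}(m,pq^{2},q,u)$ becomes $pq^{2}(mu - (m-1)) = pq^{2}(m(u-1)+1)$, which is precisely the stated recurrence. The initial values $G_0^{\ast} = 1$ and $G_1^{\ast} = mu + pq$ follow by direct enumeration of $\mathcal{D}_0$ and $\mathcal{D}_1$. I do not anticipate any serious obstacle: the only delicate point is the bookkeeping, namely verifying that the correct shift is $p \mapsto pq$ (rather than some other combination) and that the single factor of $u$ per connected component is counted exactly once across the case split on position two. Once these are in place the algebraic elimination is routine.
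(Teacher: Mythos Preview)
Your proposal is correct and follows essentially the same approach as the paper's own proof: both introduce the auxiliary series $z_n^{\emptyset}$ and $z_n^{c}$, establish the same three-equation system (your identity $z_n^{c,\ast}=u\,z_{n-1}^{\emptyset,\ast}(pq)+(m-1)z_{n-1}^{c,\ast}(pq)$ is just the paper's $z_n^{c}=upq^{2}G_{n-2}^{\ast}(pq^{2})+(m-1)z_{n-1}^{c}(pq)$ with the second equation substituted in), and eliminate $z_n^{c}$ via $m z_n^{c}=G_n^{\ast}-z_n^{\emptyset}$ to reach the stated recurrence.
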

\begin{proof}
Let $z_{n}^{\emptyset}(m,p,q)$ and $z_{n}^{c}(m,p,q)$ be two generating function for dimers as in 
Definition \ref{defn:z}.
Then, we have the following system of functional equations:
\begin{align*}
&G_{n}^{\ast}(m,p,q)=z_{n}^{\emptyset}(m,p,q)+mz_{n}^{c}(m,p,q), \\
&z_{n}^{\emptyset}(m,p,q)=pqG_{n-1}^{\ast}(m,pq,q), \\
&z_{n}^{c}(m,p,q)=upq^{2}G_{n-2}^{\ast}(m,pq^2,q)+(m-1)z_{n-1}^{c}(m,pq,q).
\end{align*}
Then, we have 
\begin{align*}
G_{n}^{\ast}(m,p,q)&=pq G_{n-1}^{\ast}(m,p,q)+mupq^2G_{n-2}(m,pq^2,q) \\
&\qquad+(m-1)\left(G_{n-1}^{\ast}(m,pq,q)-pq^{2}G_{n-2}^{\ast}(m,pq^2,q) \right), \\
&=(pq+m-1)G_{n-1}^{\ast}(m,pq,q)+pq^{2}(m(u-1)+1)G_{n-2}^{\ast}(m,pq^2,q),
\end{align*}
which completes the proof.
\end{proof}

\subsection{Generalized Dimer models with distance two}
Another generalization of the dimer model of Fibonacci type is 
to consider the models satisfying the following condition in addition to (C1) and (C2):
\begin{enumerate}
\item[(C3)] 
Two dimers with the same color have distance at least two.
\end{enumerate}

Let $G^{(2)}_{n}(m,r,s)$ be the generating function of the dimer models satisfying 
the conditions (C1), (C2) and (C3).
Note that the dimers with different colors can be adjacent to each other.
The size of connected components of dimers is one for $m=1$, two for $m=2$,
and arbitrary for $m\ge3$.

\begin{prop}
The generating function $G^{(2)}_{n}(m,r,s)$ satisfies the recurrence relation 
\label{prop:Gcdis2}
\begin{align*}
G^{(2)}_{n}(m,r,s)&=(1+rs(m-2))G^{(2)}_{n-1}(rs)+rs G^{(2)}_{n-2}(rs^2) \\
&\quad+rs(1+rs^2(m-1))G^{(2)}_{n-3}(rs^3)+r^2s^3G^{(2)}_{n-4}(rs^{4}),
\end{align*}
with the initial conditions 
\begin{align*}
G^{(2)}_{i}(m,r,s)=
\begin{cases}
1, & i\le 0, \\
1+rsm, & i=1.
\end{cases}
\end{align*}
\end{prop}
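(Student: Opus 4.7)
The plan is to extend the strategy used for Theorem~\ref{thrm:gfDyck} by enlarging the system of auxiliary generating functions to handle the longer-range constraint (C3). In the original setting, two auxiliaries $z_{n}^{\emptyset}$ and $z_{n}^{c}$ sufficed because a dimer at position $1$ only constrains position $2$; under (C3), however, a dimer of color $c$ at position $1$ now forbids color $c$ at both positions $2$ (via (C2)) and $3$ (via (C3)), so the remainder of the segment must be counted under a two-position color restriction.

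I would therefore introduce, in addition to the analogues of $z_{n}^{\emptyset}$ and $z_{n}^{c}$ from Definition~\ref{defn:z}, auxiliary generating functions such as $H_{n}$, counting configurations in which a specified color is forbidden at position $1$, and $K_{n}$, counting configurations in which a specified color is forbidden at both positions $1$ and $2$. By the obvious color symmetry, each such quantity depends only on the multiset of forbidden colors, not on their identities. One then writes functional equations for each auxiliary by conditioning on whether position $1$ is empty or carries a dimer (summing over the allowed colors), using the shift $r \mapsto rs$ to absorb the position shift. This yields a linear system relating $G_{n}^{(2)}$, $H_{n}$, $K_{n}$ and their shifted versions $G_{n-k}^{(2)}(m, rs^{k}, s)$ for $k=1,2,3,4$.

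The stated recurrence is obtained by eliminating the auxiliaries $H$ and $K$ from the system. The appearance of $G_{n-4}^{(2)}(rs^{4})$ corresponds to the deepest branch of the case analysis, in which positions $1$ and $2$ both carry dimers (of necessarily distinct colors, contributing the factor $r^{2}s^{3}$) and the propagated constraints from (C3) to positions $3$ and $4$ must be resolved before the residual segment can be identified with a shifted copy of $G^{(2)}$.

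The principal difficulty is the color bookkeeping: one must track when two distinct forbidden colors collapse to a single color (this is the origin of the factor $(m-2)$ appearing in the first term of the recurrence) versus when they remain distinct, and how the factors $m$, $m-1$, $m-2$ combine across the branches. Once the system has been written down correctly, the elimination is a purely algebraic computation parallel to that in the proof of Theorem~\ref{thrm:gfDyck}, and the initial conditions $G^{(2)}_{i}$ for $i \le 1$ follow from direct inspection of small cases.
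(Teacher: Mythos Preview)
Your strategy is correct and would yield the recurrence, but it differs from the paper's route. The paper keeps only the two auxiliaries $z_n^{\emptyset}$ and $z_n^{c}$ of Definition~\ref{defn:z} and derives a single deeper recurrence for $z_n^{c}$ (Lemma~\ref{lemma:Gcdis2}): it classifies configurations by the pattern at positions $1,2,3$ and handles the case of a connected component beginning at positions $1,2$ via an alternating infinite sum $\sum_{j\ge1}(-1)^{j-1}r^{j}s^{j(j+1)/2}z_{n-j}^{c}(rs^{j})$, which is then collapsed to finitely many terms by the first-order shift identity $A_n(r)=rs\,z_{n-1}^{c}(rs)-rs\,A_{n-1}(rs)$. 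Your proposal instead enlarges the state space so that each state records the forbidden colors at the leading one or two positions, producing a finite first-order linear system to be eliminated. (One caveat: the pair $H_n,K_n$ you name does not quite close the system; you also need a state with two distinct forbidden colors at position $1$ and one of them forbidden at position $2$, after which the system does close.) Both arguments are valid; your transfer-matrix approach is more mechanical and extends cleanly to larger forbidden distances, whereas the paper's telescoping keeps the auxiliary count at two at the price of the inclusion--exclusion manipulation.
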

Before proceeding to the proof of Proposition \ref{prop:Gcdis2}, we introduce 
a lemma regarding generating functions.
We define two generating functions $z_{n}^{\emptyset}(m,r,s)$ and $z_{n}^{c}(m,r,s)$
satisfying the conditions (C1), (C2) and (C3)
as in Definition \ref{defn:z}.
\begin{lemma}
\label{lemma:Gcdis2}
The generating functions satisfy the following recurrence relations.
\begin{align*}
&G^{(2)}_{n}(m,r,s)=z_{n}^{\emptyset}(m,r,s)+m z_{n}^{c}(m,r,s), \\
&z_{n}^{\emptyset}(m,r,s)=G^{(2)}_{n-1}(m,rs,s), \\
&z_{n}^{c}(m,r,s)=rs\left((m-2)z_{n-1}^{c}(rs)+(m-1)z_{n-2}^{c}(rs^2)\right. \\
&\qquad\left.+rs^2(m-1)z_{n-3}^{c}(rs^3)
+G^{(2)}_{n-3}(rs^3)+rs^2G^{(2)}_{n-4}(rs^4)
\right)
\end{align*}
\end{lemma}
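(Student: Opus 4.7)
\emph{Proof plan.} The plan is to establish the three identities in turn. The first, $G^{(2)}_n = z^\emptyset_n + m z^c_n$, follows by partitioning $\mathcal{D}^{(2)}_n$ according to whether position~$1$ is empty (contributing $z^\emptyset_n$) or carries a dimer of one of $m$ colours (contributing $m z^c_n$ by the colour symmetry of the weight). The second, $z^\emptyset_n = G^{(2)}_{n-1}(m, rs, s)$, follows by the standard position-shift bijection: a configuration with empty position~$1$ restricts to an arbitrary valid configuration on positions $2, \ldots, n$, and the substitution $r \to rs$ encodes the shift, a dimer of weight $r s^i$ becoming one of weight $(rs) \, s^{i-1}$.

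The main work is the recurrence for $z^c_n$, which I would prove by decomposing the configurations it counts according to the occupation pattern at positions~$2$, $3$, and (when relevant)~$4$. The key observation is that (C3), interpreted as requiring two same-coloured dimers to have position difference at least~$3$ (equivalently, at least two positions between them), causes the colour-$c$ constraint from position~$1$ to reach position~$2$ (by (C2)) and position~$3$ (by (C3) with position difference~$2$), but no further: from position~$4$ onward the constraint on colour~$c$ dissipates.

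The five terms of the recurrence correspond to five occupation patterns. With both positions~$2$ and~$3$ empty, the continuation from position~$4$ is an arbitrary valid configuration, giving the term $G^{(2)}_{n-3}(rs^3)$. With position~$2$ empty and position~$3$ carrying a dimer, the colour at position~$3$ must differ from $c$, so $(m-1)$ choices combine with a $z^c$-continuation from position~$3$, giving $(m-1) z^c_{n-2}(rs^2)$. With positions~$2$ and~$3$ both carrying dimers, the colour at position~$3$ must differ from $c$ (by (C3)) and from the colour at position~$2$ (by (C2)), producing the factor $(m-2)$; the $z^c$-continuation then begins at position~$2$, giving $(m-2) z^c_{n-1}(rs)$. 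With position~$2$ a dimer, position~$3$ empty, and position~$4$ a dimer, the colour at position~$4$ must differ from that at position~$2$ (by (C3) with position difference~$2$), so $(m-1)$ choices combine with a $z^c$-continuation from position~$4$, giving $rs^2 (m-1) z^c_{n-3}(rs^3)$. Finally, with position~$2$ a dimer and positions~$3$ and~$4$ both empty, the continuation from position~$5$ is unconstrained, giving $rs^2 G^{(2)}_{n-4}(rs^4)$. Summing these five contributions and factoring the outer weight $rs$ from the dimer at position~$1$ yields the stated recurrence.

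The delicate step, and the main obstacle, is the correct allocation of the $(m-1)$ colour choices at position~$2$ among the three patterns with a dimer there. The identity $(m-1) = (m-2) + 1$ reflects that $(m-2)$ of these colour choices are absorbed into the $(m-2)\,z^c_{n-1}(rs)$ term through the internal $z^c$-structure, while the remaining one contributes to the $rs^2(m-1)\,z^c_{n-3}(rs^3)$ term when position~$4$ carries a dimer and to the $rs^2\,G^{(2)}_{n-4}(rs^4)$ term when position~$4$ is empty. Verifying this split cleanly is the technical heart of the proof; I would carry it out by a careful case-by-case bookkeeping using the colour symmetry of the weight, or equivalently by checking the polynomial identity monomial by monomial and using induction on $n$.
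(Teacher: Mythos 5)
Your first two identities and your guiding observation (the colour-$c$ constraint from the dimer at position $1$ reaches positions $2$ and $3$ but dissipates from position $4$ on) are fine and agree with the paper, but the heart of the lemma, the five-term recurrence for $z^{c}_{n}$, is not actually established, and the pattern-to-term assignment you propose is false as a direct enumeration. Test it on the pattern ``dimer at $2$, position $3$ empty, dimer at $4$'': the colour at position $2$ has $m-1$ admissible choices and the colour at position $4$ has $m-1$ choices (it must only avoid position $2$'s colour), so this pattern's generating function is $(m-1)^2r^2s^3z^{c}_{n-3}(rs^3)$, not the $(m-1)r^2s^3z^{c}_{n-3}(rs^3)$ of the lemma; likewise ``dimer at $2$, positions $3,4$ empty'' gives $(m-1)r^2s^3G^{(2)}_{n-4}(rs^4)$ versus the lemma's $r^2s^3G^{(2)}_{n-4}(rs^4)$; and ``dimers at $2$ and $3$'' is not $rs(m-2)z^{c}_{n-1}(rs)$, since $z^{c}_{n-1}(rs)$ also contains continuations with position $3$ empty and, when position $3$ is occupied, counts its colour with multiplicity $m-1$ rather than the correct $m-2$. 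The stated recurrence holds only after cancellations across terms, and your ``$(m-1)=(m-2)+1$ reallocation'' is precisely the step you defer; as written it is not a set-partition argument, and ``checking the polynomial identity monomial by monomial'' is a plan, not a proof.

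The missing idea is a device for the case of a dimer at position $2$: the continuation from position $2$ is not a plain $z^{c}$-object, because it inherits the extra constraint that its second site (the original position $3$) avoid the colour $c$. The paper resolves this by introducing the alternating sum $A_{n}(r)=\sum_{1\le j}(-1)^{j-1}r^{j}s^{j(j+1)/2}z^{c}_{n-j}(rs^{j})$, writing $z^{c}_{n}(r)=rsG^{(2)}_{n-3}(rs^{3})+rs(m-1)z^{c}_{n-2}(rs^{2})+(m-1)A_{n}(r)$, proving $A_{n}(r)=rsz^{c}_{n-1}(rs)-rsA_{n-1}(rs)$, and then eliminating $A$ to get the five-term relation. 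Equivalently, you could introduce an auxiliary generating function $W_{n}(r)$ for configurations with a dimer at position $1$ and one additional forbidden colour at position $2$; it satisfies $W_{n}(r)=z^{c}_{n}(r)-rsW_{n-1}(rs)$ together with a three-case decomposition analogous to yours ($W_{n}(r)=rsG^{(2)}_{n-3}(rs^{3})+rs(m-1)z^{c}_{n-2}(rs^{2})+rs(m-2)W_{n-1}(rs)$), and eliminating $W$ yields exactly the claimed recurrence. Without some auxiliary object of this kind your five-case decomposition does not close, so the proposal has a genuine gap exactly at the step you yourself flag as the technical heart.
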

\begin{proof}
The first two equations are obvious from the definitions of generating functions.
We prove the third relation.

Recall that dimers with the same colors have distance at least two.
Thus, we have 
\begin{align}
\label{eq:recrelzc}
\begin{split}
z_{n}^{c}(r)&=rsG^{(2)}_{n-3}(rs^3)+rs(m-1)z_{n-2}^{c}(rs^2) \\
&\qquad+rs\left( (m-1) (z_{n-1}^{c}-rs^2(z_{n-2}^{c}(rs^2)-rs^3(z_{n-3}^{c}(rs^3)-\cdots  \right), \\
&=rsG^{(2)}_{n-3}(rs^{3})+rs(m-1)z_{n-2}^{c}(rs^2)+(m-1)\sum_{1\le j}(-1)^{j-1}r^js^{j(j+1)/2}z_{n-j}^{c}(rs^j).
\end{split}
\end{align}
The first term corresponds to the dimer configurations such that we have a dimer at position one and 
there are no dimers at positions two and three.
The second term corresponds to the configurations such that we have a dimer at position one and 
there is no dimer at position two, and there is a dimer at position three.
The third term comes from the configurations such that there is a connected component including 
the dimers at positions one and two.
Then, we have the following functional relation:
\begin{align*}
A_{n}(r)&:=\sum_{1\le j}(-1)^{j-1}r^{j}s^{j(j+1)/2}z_{n-j}^{c}(rs^j), \\
&=rsz_{n-1}^{c}(rs)-\sum_{2\le j}(-1)^{j}r^{j}s^{j(j+1)}z_{n-j}(rs^j), \\
&=rsz_{n-1}^{c}(rs)-rs\sum_{1\le j}(-1)^{j-1}(rs)^{j}s^{j(j+1)/2}z_{n-1-j}^{c}(rs^{j+1}), \\
&=rsz_{n-1}^{c}(rs)-rsA_{n-1}(rs).
\end{align*}
By rewriting the recurrence relation for $A_{n}(r)$ in terms of 
the generating functions $G^{(2)}_{n}(r)$ and $z^{c}_{n}$, 
we have the third recurrence relation.
\end{proof}

\begin{proof}[Proof of Proposition \ref{prop:Gcdis2}]
From the first two equations in Lemma \ref{lemma:Gcdis2}, we have 
\begin{align*}
mz_{n}^{c}(m,r,s)=G^{(2)}_{n}(m,r,s)-G^{(2)}_{n-1}(m,rs,s).
\end{align*}
By substituting this into the third equation in Lemma \ref{lemma:Gcdis2},
and rearranging the terms,
we obtain an expression of $z_{n}^{c}(m,r,s)$ in terms of only $G^{(2)}_{n}(m,r,s)$. 
We obtain the recurrence relation for $G^{(2)}_{n}(m,r,s)$ from the first equation 
in Lemma \ref{lemma:Gcdis2}.
This completes the proof.
\end{proof}

The first few generating functions $G_{n}^{(2)}(m,r,s)$ are 
\begin{align*}
G_{2}^{(2)}(m,r,s)&=1+r(ms+ms^2)+m(m-1)r^2 s^3, \\
G_{3}^{(2)}(m,r,s)&=1+mr(s+s^2+s^3)+m(m-1)r^2 (s^3 +s^4+s^5)+m(m-1)(m-2)r^3s^6, \\
G_{4}^{(2)}(m,r,s)&=1+mr(s+s^2+s^3+s^4)+r^2(m(m-1)(s^3+s^4+s^6+s^7)+m(2m-1)s^5) \\
&\quad +r^3(m(m-1)(m-2)(s^6+s^9)+m(m-1)^2(s^7+s^8))+m(m-1)(m-2)^2r^4s^{10}.
\end{align*}

\subsection{Type I}
We define a recurrence relation which we call type I.
We modify the recurrence relation (\ref{eq:gffib}) to preserve its
combinatorial structure. This means that the recurrence relation 
contains only terms $G_{n-p}(rs^{p})$ with a non-negative
integer $p$.

\begin{defn}
\label{defn:rrtypeI}
Consider the recurrence relation
\begin{align*}
G^{I}_{n}(m,r,s,u)=f(rs)G^{I}_{n-1}(m,rs,s,u)+g(rs)G^{I}_{n-k-1}(m,rs^{k+1},s,u),
\end{align*}
with initial conditions $G^{I}_{i}(m,r,s,u)=1$ for $i\le 0$.
We call this recurrence relation type I.
\end{defn}

Note that if we set $k=1$, the recurrence relation in Definition \ref{defn:rrtypeI}
is nothing but the recurrence relation (\ref{eq:gfDyck}) in Theorem \ref{thrm:gfDyck}.
Recall that the generating function of the dimer model satisfies Eq. (\ref{eq:gfDyck}) and 
it is related to Dyck paths \cite{S21b}. 
The generating function of type I preserves this combinatorial correspondence. More 
precisely, it has a description in terms of $k$-Dyck paths (see Section \ref{sec:glpI}).

\subsection{Type II}
We define a recurrence relation which we call type II.

We add a condition to the multi-colored dimer model on a segment.
\begin{enumerate}
\item[(C3$'$)] Let $d_1$ and $d_2$ be two connected components of dimers. 
The distance between $d_{1}$ and $d_{2}$ is at least $2$.
\end{enumerate}
We consider the dimer model with conditions (C1), (C2) and (C3$'$).

\begin{prop}
\label{prop:GIIk2}
The generating function $G^{II}_{n}(m,r,s,u)$ for the dimer model 
with conditions (C1), (C2) and (C3$'$) satisfies 
\begin{align}
\label{eq:GIIk2}
\begin{split}
G^{II}_{n}(m,r,s,u)&=f(rs)G^{II}_{n-1}(m,rs,s,u)-rs(m-1)G^{II}_{n-2}(m,rs^2,s,u) \\
&\qquad+mrsu G^{II}_{n-3}(m,rs^{3},s,u).
\end{split}
\end{align}
\end{prop}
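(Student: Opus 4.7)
The plan is to extend the argument of Lemma \ref{lemma:Gz} to the type II model. I introduce auxiliary generating functions $z^{\emptyset}_n$ and $z^c_n$ defined exactly as in Definition \ref{defn:z}, but now enumerating dimer configurations that satisfy (C1), (C2) and (C3$'$). The identities
\[
G^{II}_n(m,r,s,u) = z^{\emptyset}_n(m,r,s,u) + m\, z^c_n(m,r,s,u), \qquad z^{\emptyset}_n(m,r,s,u) = G^{II}_{n-1}(m,rs,s,u),
\]
are immediate from the definitions: the factor $m$ counts the color choice at position one, and the second equation is the standard position shift $r \mapsto rs$ after declaring position one empty. The target recurrence will follow once the correct recurrence for $z^c_n$ is established and the auxiliary functions are eliminated.

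The key new step is the analysis of $z^c_n$ in the presence of (C3$'$). I split a configuration with a fixed color-$c$ dimer at position one according to whether the connected component containing that dimer has size one or size at least two. If the component has size $\geq 2$, position two carries a dimer of one of $m-1$ colors different from $c$ and no new component is opened, so this case contributes $(m-1) rs\, z^c_{n-1}(m, rs, s, u)$, exactly as in the Fibonacci-type model. If instead the component at position one is a singleton, condition (C3$'$) forces positions two \emph{and} three to be simultaneously empty, so that the first dimer of the next component (if any) sits at position $\geq 4$; this case therefore contributes $rsu\, G^{II}_{n-3}(m, rs^3, s, u)$, with the shift $r \mapsto rs^3$ re-indexing positions four through $n$. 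Combining the two cases gives
\[
z^c_n(m, r, s, u) = rsu\, G^{II}_{n-3}(m, rs^3, s, u) + (m-1) rs\, z^c_{n-1}(m, rs, s, u).
\]

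The final step is a routine algebraic elimination. From the first two identities, $m\, z^c_n(m,r,s,u) = G^{II}_n(m,r,s,u) - G^{II}_{n-1}(m,rs,s,u)$, and the analogous identity with $(n, r)$ replaced by $(n-1, rs)$. Substituting both into $m$ times the recurrence for $z^c_n$ and collecting terms yields
\[
G^{II}_n = \bigl(1 + (m-1)rs\bigr) G^{II}_{n-1}(rs) - (m-1) rs\, G^{II}_{n-2}(rs^2) + m rsu\, G^{II}_{n-3}(rs^3),
\]
which, using $f(rs) = 1 + (m-1)rs$ from Definition \ref{defn:fg}, is precisely Eq.~(\ref{eq:GIIk2}). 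The only real subtlety is the singleton-component case of $z^c_n$: one must recognise that (C3$'$) forces \emph{two} empty positions after a singleton component at position one rather than the one empty position implicit in Lemma \ref{lemma:Gz}, and this is exactly what replaces the term $rsu\, z^{\emptyset}_{n-1}(rs) = rsu\, G^{II}_{n-2}(rs^2)$ of the original model by $rsu\, G^{II}_{n-3}(rs^3)$ here, upgrading the recurrence from degree two to degree three.
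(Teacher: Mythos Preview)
Your proof is correct and follows essentially the same approach as the paper: both introduce the auxiliary generating functions $z^{\emptyset}_n$ and $z^c_n$, derive the same system of three relations (the paper records these as a separate lemma), and eliminate $z^c_n$ via $m z^c_n = G^{II}_n - G^{II}_{n-1}(rs)$ to obtain the recurrence. Your identification of the singleton-component case as the place where (C3$'$) forces two empty positions, replacing $G^{II}_{n-2}(rs^2)$ by $G^{II}_{n-3}(rs^3)$, is exactly the paper's reasoning.
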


Before proceeding to the proof of Proposition \ref{prop:GIIk2}, we introduce 
a lemma.
Define $z^{\emptyset}_{n}(m,r,s,u)$ and $z^{c}_{n}(m,r,s,u)$ as in Definition \ref{defn:z}
for the multi-colored dimer model with conditions (C1), (C2) and (C3$'$).

\begin{lemma}
\label{lemma:Gkdim}
The generating functions $G^{II}_{n}(m,r,s,u)$, $z^{\emptyset}_{n}(m,r,s,u)$ and $z^{c}_{n}(m,r,s,u)$
satisfy the following system of equations:
\begin{align}
\label{eq:GIIk2G}
&G^{II}_{n}(m,r,s,u)=z^{\emptyset}_{n}(m,r,s,u)+mz^{c}_{n}(m,r,s,u), \\
\label{eq:GIIk2e}
&z^{\emptyset}_{n}(m,r,s,u)=G^{II}_{n-1}(m,rs,s,u), \\
\label{eq:GIIk2c}
&z^{c}_{n}(m,r,s,u)=rsuG^{II}_{n-3}(m,rs^{3},s,u)+rs(m-1)z_{n-1}^{c}(m,rs,s,u).
\end{align}
\end{lemma}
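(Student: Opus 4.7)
The plan is to mirror the proof of the analogous Lemma \ref{lemma:Gz} from the Fibonacci model, adjusting only for the stronger separation condition (C3$'$) which demands distance at least two (rather than one) between distinct connected components.

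Equations (\ref{eq:GIIk2G}) and (\ref{eq:GIIk2e}) are direct analogues of (\ref{eq:Gninz}) and (\ref{eq:zemp}): the former by partitioning the set of valid configurations counted by $G^{II}_{n}$ according to whether position one is empty (contributing $z^{\emptyset}_{n}$) or carries a dimer, which by color symmetry contributes $m z^{c}_{n}$; the latter by the bijection between configurations with position one empty and arbitrary valid configurations on a segment of length $n-1$, with the substitution $r\mapsto rs$ encoding the uniform one-step shift of all dimer positions. Neither argument needs any modification from the Fibonacci case, since conditions (C1), (C2), (C3$'$) restrict only local patterns among dimers and are preserved under restriction to a subsegment.

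The main work lies in (\ref{eq:GIIk2c}). I would decompose a configuration counted by $z^{c}_{n}$ according to whether the connected component containing the dimer at position one terminates at position one (\emph{Case A}) or extends to position two (\emph{Case B}). In Case A, condition (C3$'$) forces positions two \emph{and} three to be empty, so positions $4,\dots,n$ carry an arbitrary valid configuration; weighting the dimer at position one by $rsu$ (one dimer, position one, one new component) and shifting the remaining positions by three produces the first summand $rsu\,G^{II}_{n-3}(m,rs^{3},s,u)$. In Case B, there is a dimer at position two of some color $c'\neq c$ by (C2), giving the factor $m-1$; the tail starting at position two is a configuration counted by $z^{c'}_{n-1}(m,rs,s,u) = z^{c}_{n-1}(m,rs,s,u)$ by color symmetry, and the dimer at position one contributes $rs$ without an additional $u$. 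Summing the two cases yields (\ref{eq:GIIk2c}).

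The main subtlety, inherited from the Fibonacci argument, is the bookkeeping of the component-counting variable $u$: each connected component must contribute exactly one factor of $u$, and in Case B this factor is supplied by the nested $z^{c}_{n-1}$ rather than by the explicit $rs$ attached to position one. The essential structural difference from Lemma \ref{lemma:Gz} is the appearance of $G^{II}_{n-3}$ in place of $z^{\emptyset}_{n-1}=G^{II}_{n-2}$ in Case A, precisely reflecting that (C3$'$) enforces two empty sites following an isolated component rather than one. A small sanity check at $n=2$ (where the right-hand side reduces to $rsu + (m-1)r^{2}s^{3}u$, matching the direct enumeration of configurations with a color-$c$ dimer at position one) would confirm the bookkeeping before writing up the general argument.
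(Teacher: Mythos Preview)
Your proposal is correct and follows essentially the same approach as the paper: the paper likewise dispatches the first two equations as immediate from the definitions, and derives (\ref{eq:GIIk2c}) by splitting according to whether the dimer at position one is a connected component of size one (your Case~A) or belongs to a larger component (your Case~B), noting that (C3$'$) forces the extra empty site in Case~A. Your write-up is more detailed than the paper's, but the decomposition and the handling of the $u$-bookkeeping are identical.
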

\begin{proof}
The first two equations are obvious from the definition of generating functions.
The third equation reflects the condition that the distance between two connected 
components is at least $2$. Note that the first term corresponds to a dimer configuration 
such that it contains a dimer at position one, which is a connected component of size one,
and the second term corresponds to a configuration such that it has a connected component 
of size larger than one with a dimer at position one.
\end{proof}
	
\begin{proof}[Proof of Proposition \ref{prop:GIIk2}]
From Eqs. (\ref{eq:GIIk2G}) and (\ref{eq:GIIk2e}), we have 
\begin{align*}
mz_{n}^{c}(m,r,s,u)=G^{II}_{n}(m,r,s,u)-G^{II}_{n-1}(m,rs,s,u).
\end{align*}
Then, we have 
\begin{align*}
G^{II}_{n}(m,r,s,u)&=z^{\emptyset}_{n}(m,r,s,u)+mz^{c}_{n}(m,r,s,u), \\
&=G^{II}_{n-1}(m,rs,s,u)+mrsuG^{II}_{n-3}(m,rs^3,s,u)+rsm(m-1)z_{n-1}^{c}(m,rs,s,u), \\
&=(1+rs(m-1))G^{II}_{n-1}(m,rs,su)-rs(m-1)G^{II}_{n-2}(m,rs^2,s,u) \\
&\qquad+mrsuG^{II}_{n-3}(m,rs^3,s,u),
\end{align*}
which completes the proof.
\end{proof}

\begin{remark}
\label{remark:typeII}
From the recurrence relation (\ref{eq:GIIk2}), we observe that the sum of the coefficients 
of the second and the third terms in the right had side is equal to $g(rs)$.
This implies that we split the third term in Definition \ref{defn:rrtypeI} into 
two terms to reflect the effect of the separation of two connected components by two sites.
\end{remark}

We define a series of recurrence relations by generalizing the 
recurrence relation (\ref{eq:GIIk2}).
\begin{defn}
\label{defn:GnII}
Let $k\ge2$. Consider the recurrence relation 
\begin{align*}
G^{II}_{n}(m,r,s,u)&=f(rs)G^{II}_{n-1}(m,rs,s,u)-rs(m-1)G^{II}_{n-k}(m,rs^{k},s,u) \\
&\qquad+mrsuG^{II}_{n-k-1}(m,rs^{k+1},s,u),
\end{align*}
with initial conditions $G^{II}_{i}(m,r,s,u)=1$ for $i\le 0$.
We call this recurrence relation type II.
\end{defn}
When $k=2$, the type II relation is nothing but 
the recurrence relation (\ref{eq:GIIk2}).

Similarly, we consider another recurrence relation of type II:
\begin{defn}
\label{defn:HII}
The recurrence relation for $H^{II}_{n}(m,r,s,u)$ is defined to be
\begin{align*}
H^{II}_{n}(m,r,s,u)=f(rs)H^{II}_{n-1}(m,rs,s,u)-rs(m-1)H^{II}_{n-2}(m,rs^2,s,u)
+mrsu H^{II}_{n-k-1}(rs^{k+1})
\end{align*}
for $k\ge1$.
\end{defn} 
The generating function $H^{II}_{n}(m,r,s,u)$ comes from the dimer model 
where the distance of two adjacent connected components is at least $k$ sites.
When $k=2$, the recurrence relations for $G^{II}_{n}$ and $H^{II}_{n}$ coincide
with each other.

\subsection{Type III}
In type I and type II, we consider the dimer model where the length of 
a dimer is one.
In this subsection, we consider the dimers of length $k$.

A dimer of length $k$ can be obtained from $k$ dimers of length one by 
gluing them together.
We define the position of a dimer $d$ of length $k$ as the position of 
the leftmost dimer of length one which is contained by the dimer $d$. 
Though a dimer of length $k$ consists of $k$ dimers of length one, 
we count a dimer of length $k$ as a single dimer.
A glued dimer of length $k$ has a single color, i.e., all the dimers of length one 
which constitute a glued dimer of length $k$ have the same color.

We first consider the $k=2$ case, and generalize it to the $k\ge3$ case.

\begin{prop}
\label{prop:GIIIk2}
The generating function $G^{III}_{n}(r):=G^{III}_{n}(m,r,s,u)$ of the dimers of length $2$ 
satisfies the following recurrence relation: 
\begin{align}
\label{eq:GIIIk2}
G^{III}_{n}(r)=G^{III}_{n-1}(rs)+rs(m-1)G^{III}_{n-2}(rs^{2})+rs(m(u-1)+1)G^{III}_{n-3}(rs^{3}).
\end{align}
\end{prop}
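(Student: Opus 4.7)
The plan is to adapt the two-step strategy used for Theorem \ref{thrm:gfDyck} and Proposition \ref{prop:GIIk2}. First I would introduce auxiliary generating functions $z_{n}^{\emptyset}(m,r,s,u)$ and $z_{n}^{c}(m,r,s,u)$ in direct analogy with Definition \ref{defn:z} but in the length-$2$ setting: $z_{n}^{\emptyset}$ enumerates configurations with no length-$2$ dimer at position one, and $z_{n}^{c}$ enumerates those with a length-$2$ dimer of a fixed color $c$ at position one (hence occupying edges $1$ and $2$). The color-sum identity
\begin{align*}
G_{n}^{III}(r) = z_{n}^{\emptyset}(r) + m\, z_{n}^{c}(r)
\end{align*}
is immediate, and the equality $z_{n}^{\emptyset}(r) = G_{n-1}^{III}(rs)$ follows because only a dimer at position $1$ can cover edge $1$, so forbidding it is the same as deleting edge $1$ and shifting all remaining positions down by one (which is encoded by the substitution $r \to rs$).

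Next I would establish the crucial recurrence
\begin{align*}
z_{n}^{c}(r) = rsu\, z_{n-2}^{\emptyset}(rs^{2}) + (m-1)\, rs\, z_{n-2}^{c}(rs^{2}),
\end{align*}
which is the step I expect to be the main obstacle. The point is to identify correctly the nearest possible neighbor of a length-$2$ dimer at position one. Such a dimer covers edges $1,2$ and terminates at $v_{2}$; the smallest valid position for another dimer is position $3$, which covers edges $3,4$ and begins at $v_{2}$. These two dimers share precisely the vertex $v_{2}$, so they are adjacent and belong to the same connected component, and condition (C2) forces the color at position $3$ to be one of the $m-1$ colors distinct from $c$. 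The two summands correspond, respectively, to the dimer at position $1$ being a singleton component (contributing the factor $u$, with the requirement that no dimer sit at position $3$, giving $z_{n-2}^{\emptyset}$) and to the component being extended by a dimer at position $3$ (no new $u$, since $z_{n-2}^{c}$ already records the $u$ for the component it opens). The shift by two in the subscript and the substitution $r \to rs^{2}$ both arise from the fact that the dimer at position $1$ occupies two edges rather than one, and this is the essential combinatorial change from the length-$1$ case.

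The remainder is routine elimination. The first two identities yield
\begin{align*}
m z_{n}^{c}(r) = G_{n}^{III}(r) - G_{n-1}^{III}(rs),
\end{align*}
so that $z_{n-2}^{\emptyset}(rs^{2}) = G_{n-3}^{III}(rs^{3})$ and $m z_{n-2}^{c}(rs^{2}) = G_{n-2}^{III}(rs^{2}) - G_{n-3}^{III}(rs^{3})$. Substituting these into $G_{n}^{III} = G_{n-1}^{III}(rs) + m z_{n}^{c}$ and collecting the coefficient of $G_{n-3}^{III}(rs^{3})$ via the algebraic identity $mu - (m-1) = m(u-1)+1$ gives exactly the three-term recurrence claimed in Proposition \ref{prop:GIIIk2}.
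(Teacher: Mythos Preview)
Your proposal is correct and follows essentially the same approach as the paper: the paper introduces exactly the same auxiliary generating functions $z_{n}^{\emptyset}$ and $z_{n}^{c}$, derives the identical three relations
\[
G^{III}_{n}=z^{\emptyset}_{n}+mz^{c}_{n},\qquad
z^{\emptyset}_{n}(r)=G^{III}_{n-1}(rs),\qquad
z^{c}_{n}(r)=rs\bigl(uz^{\emptyset}_{n-2}(rs^{2})+(m-1)z^{c}_{n-2}(rs^{2})\bigr),
\]
and then eliminates $z^{\emptyset}$ and $z^{c}$ in the same way you do. Your combinatorial justification for the third relation (the dimer at position~$3$ sharing the vertex $v_{2}$ with the dimer at position~$1$) is slightly more detailed than the paper's, but the argument is the same.
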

\begin{proof}
Let $z^{\emptyset}(r)$ and $z^{c}(r)$ be the generating function of dimer configurations 
as in Definition \ref{defn:z}.
Then, the three generating functions $G^{III}_{n}(r)$, $z^{\emptyset}(r)_{n}$ and $z^{c}_{n}(r)$
satisfy 
\begin{align*}
G^{III}_{n}(r)&=z^{\emptyset}_{n}(r)+mz^{c}_{n}(r), \\
z_{n}^{\emptyset}(r)&=G^{III}_{n-1}(rs), \\
z_{n}^{c}(r)&=rs\left( uz_{n-2}^{\emptyset}(rs^2)+(m-1)z_{n-2}^{c}(rs^2) \right).
\end{align*}
In the third equation, we have used the fact that the position of a dimer of length two 
is the position of the left-most dimer of length one.
From these, we have
\begin{align*}
G^{III}_{n}(r)&=z^{\emptyset}(r)+mz^{c}(r), \\
&=G^{III}_{n-1}(rs)+rs(muG^{III}_{n-3}(rs^3)+(m-1)G^{III}_{n-2}(rs^2)-(m-1)G^{III}_{n-3}(rs^3)),\\
&=G^{III}_{n-1}(rs)+rs(m-1)G^{III}_{n-2}(rs^2)+rs(m(u-1)+1)G^{III}_{n-3}(rs^3),
\end{align*}
which completes the proof.
\end{proof}

\begin{remark}
The sum of the coefficients of the first and the second terms in the right hand 
side of Eq. (\ref{eq:GIIIk2}) is equal to $f(rs)$, and the coefficient of the third 
term is $g(rs)$.
This implies that the split of the second term in Definition \ref{defn:rrtypeI} into 
two terms corresponds to increase of the size of dimers.
See also Remark \ref{remark:typeII} for a similar phenomenon for $g(rs)$.
\end{remark}

We define the generating function $G^{III}_{n}(r)$ with $k\ge1$ as follows.
\begin{defn}
\label{defn:GIII}
The recurrence relation of type III is defined as 
\begin{align*}
G^{III}_{n}(r)=G^{III}_{n-1}(rs)+rs(m-1)G^{III}_{n-k}(rs^{k})+rs(m(u-1)+1)G^{III}_{n-k-1}(rs^{k+1}).
\end{align*}
\end{defn}
The generating function $G_{n}^{III}(r)$ is the generating function of glued dimers of length $k$.
Note that we have recurrence relation (\ref{eq:GIIIk2}) in Proposition \ref{prop:GIIIk2}
by setting $k=2$.
Similarly, if we set $k=1$, the recurrence relation (\ref{eq:GIIIk2}) is nothing but the 
recurrence relation (\ref{eq:gfDyck}) of Fibonacci type.

\section{Generating functions for generalized lattice paths}
\label{sec:gfGLP}
For simplicity, we define $m'$ and $u'$ as follows.
\begin{defn}
\label{defn:mudash}
We define $m':=m-1$ and $u':=u-1$.
\end{defn}
In this section, we study the functional relation arising from 
the generating functions of dimer models in the previous section, 
and show that it has an explicit formula in terms of generalized 
lattice paths introduced in Section \ref{sec:LP}.

\subsection{Dyck paths}
We first recall the correspondence between the generating function 
of the dimer model and Dyck paths.
The recurrence relation (\ref{eq:gfDyck}) gives the following functional equation 
for a formal power series $\chi(m,r,s,u)$.
\begin{align}
\label{eq:defchi}
f(rs)\chi(m,r,s,u)=1-g(rs)\chi(m,r,s,u)\chi(m,rs,s,u),
\end{align}
where $f(x)$ and $g(x)$ are defined in Definition \ref{defn:fg}.

Let $\mu$ be a Dyck path expressed in terms of $U$ and $D$, and $\mathrm{Peak}(\mu)$ be the number 
of peaks, that is, the number of a pattern $UD$ in $\mu$.
\begin{defn}
We define the weight for $\mu$ by
\begin{align}
\label{eq:defwtmu}
\mathrm{wt}(\mu):=(m'+1)^{\mathrm{Peak}(\mu)}(u'+1)^{\mathrm{Peak}(\mu)}(m'u'+u'+1)^{n-\mathrm{Peak}(\mu)}.
\end{align}
\end{defn}

\begin{prop}
The formal power series $\chi(m,r,s,u)$ is given by
\begin{align*}
\chi(m,r,s,u)=\sum_{0\le n}(-rs)^{n}\sum_{\mu\in\mathtt{Dyck}(n)}s^{|Y(\mu_{0}/\mu)|}\mathrm{wt}(\mu),
\end{align*}
where $|Y(\mu_{0}/\mu)|$ is the number of unit boxes above $\mu_{0}$ and below $\mu$.
\end{prop}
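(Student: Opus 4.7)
The plan is to show that the formal power series on the right-hand side, which I denote by $\tilde\chi(m,r,s,u)$, satisfies the same functional equation (\ref{eq:defchi}) as $\chi$. Since setting $r = 0$ in (\ref{eq:defchi}) forces $[r^0]\chi = 1$ and the remaining coefficients are then determined recursively (every $r^{n+1}$-coefficient of the right-hand side depends only on coefficients of $\chi$ of order $\le n$), this uniquely identifies $\tilde\chi$ with $\chi$.

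First I would invoke the canonical decomposition (\ref{eq:mudecomp}) specialized to $k=1$: every nonempty $\mu \in \mathtt{Dyck}(n)$ factors uniquely as $\mu = U \circ \mu_1 \circ D \circ \mu_2$ with $\mu_i \in \mathtt{Dyck}(n_i)$ and $n = n_1 + n_2 + 1$. I then track the three relevant statistics under this decomposition. Size is clearly additive. A direct coordinate count yields the area identity
\begin{align*}
|Y(\mu_0/\mu)| = |Y(\mu_0/\mu_1)| + n_1 + |Y(\mu_0/\mu_2)|,
\end{align*}
where the extra $n_1$ boxes form the strip created by lifting $\mu_1$ one unit above the zigzag $\mu_0$ inside the wrapping $U\cdots D$. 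The peak count bifurcates: if $\mu_1 = \emptyset$ the initial $UD$ is a new peak, giving $\mathrm{Peak}(\mu) = 1 + \mathrm{Peak}(\mu_2)$; if $\mu_1 \neq \emptyset$, the adjoining letters across both splice points are $UU$ and either $DU$ or $DD$, so no peak is created across the splice and $\mathrm{Peak}(\mu) = \mathrm{Peak}(\mu_1) + \mathrm{Peak}(\mu_2)$.

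Setting $\alpha := (m'+1)(u'+1)$ and $\beta := m'u' + u' + 1$, so that $\mathrm{wt}(\mu) = \alpha^{\mathrm{Peak}(\mu)}\beta^{n-\mathrm{Peak}(\mu)}$, one checks that $g(rs) = rs\,\beta$ and $\beta - \alpha = -m'$. I then split the sum defining $\tilde\chi(r) - 1$ by whether $\mu_1$ is empty. The $\mu_1 = \emptyset$ case contributes $-rs\alpha\,\tilde\chi(r)$, because $\mathrm{wt}(\mu) = \alpha\,\mathrm{wt}(\mu_2)$ and the $\mu_2$-sum recollects to $\tilde\chi(r)$. In the $\mu_1 \neq \emptyset$ case, $\mathrm{wt}(\mu) = \beta\,\mathrm{wt}(\mu_1)\mathrm{wt}(\mu_2)$, and the factor $s^{n_1}$ from the area combines with $(-rs)^{n_1}$ to turn the $\mu_1$-sum into $\tilde\chi(m,rs,s,u) - 1$, producing the contribution $-rs\beta\bigl(\tilde\chi(m,rs,s,u) - 1\bigr)\tilde\chi(r)$. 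Adding both cases and applying $\beta - \alpha = -m'$ yields $(1 + rsm')\tilde\chi(r) = 1 - rs\beta\,\tilde\chi(r)\tilde\chi(m,rs,s,u)$, which is precisely (\ref{eq:defchi}).

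The main technical obstacle is the area identity displayed above. Its verification requires carefully comparing $\mu_0 = (UD)^n$ with $U\mu_1 D\mu_2$ box by box: the wrapped portion $U\mu_1 D$ contributes $|Y(\mu_0^{(n_1)}/\mu_1)| + n_1$ because the whole figure above $\mu_1$ sits one step higher than the corresponding staircase $(UD)^{n_1+1}$, creating an additional strip of exactly $n_1$ boxes, while the tail $\mu_2$ contributes $|Y(\mu_0^{(n_2)}/\mu_2)|$ once translated back to the origin. Everything else is routine bookkeeping of the substitution $r \mapsto rs$ inside $\tilde\chi$.
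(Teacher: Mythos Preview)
Your proof is correct and follows essentially the same approach as the paper: both verify that the proposed power series satisfies the defining functional equation (\ref{eq:defchi}) via the first-return decomposition $\mu=U\mu_1D\mu_2$, splitting according to whether $\mu_1=\emptyset$. The paper extracts the $(-rs)^n$-coefficients first and writes the recursion as $\chi_n(s)=\chi_1(s)\chi_{n-1}(s)+(m'u'+u'+1)\sum_{j=0}^{n-2}s^{n-1-j}\chi_j(s)\chi_{n-1-j}(s)$, then dismisses the combinatorial verification as ``a simple calculation''; you carry out that calculation explicitly at the generating-function level, supplying the area identity and the peak-count dichotomy that the paper leaves implicit.
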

\begin{proof}
Let 
\begin{align*}
\chi_{n}(m',s,u'):=\sum_{\mu\in\mathtt{Dyck}(n;k)}s^{|Y(\mu_{0}/\mu)|}\mathrm{wt}(\mu).
\end{align*}
From Eq. (\ref{eq:defchi}), $\chi_{n}(s):=\chi_{n}(m',s,u')$ should satisfy 
\begin{align*}
\chi_{n}(s)&=m'\chi_{n-1}(s)+(m'u'+u'+1)\sum_{j=0}^{n-1}s^{n-1-j}\chi_{j}(s)\chi_{n-1-j}(s), \\
&=(m'u'+m'+u'+1)\chi_{n-1}(s)+(m'u'+u'+1)\sum_{j=0}^{n-2}s^{n-1-j}\chi_{j}(s)\chi_{n-1-j}(s), \\
&=\chi_{1}(s)\chi_{n-1}(s)+(m'u'+u'+1)\sum_{j=0}^{n-2}s^{n-1-j}\chi_{j}(s)\chi_{n-1-j}(s),
\end{align*}
where we have used $\chi_{1}(s)=(m'+1)(u'+1)$ in the last equality.
This recurrence relation for $\chi_{n}(s)$ can be verified by a simple calculation, 
which completes the proof.
\end{proof}

We expand $\chi(m,r,s=1,u)$ as 
\begin{align*}
\chi(m,r,s=1,u)=\sum_{0\le n}(-r)^{n}S_{1}(n; m',u').
\end{align*}
Specializations of the variables $m'$ and $u'$ give several enumerations of lattice paths 
regarding $S_{1}(n; m',u')$.
We consider four specializations $m',u'\in\{0,1\}$.

\begin{prop}
\label{prop:Dycksp}
The number $S_{1}(n; m',u')$ with $m',u'\in\{0,1\}$ enumerates the following combinatorial objects.
\begin{enumerate}
\item  The number $S_{1}(n;0,0)=\mathtt{Dyck}(n;1)$. 
\item The number $S_{1}(n;0,1)$ is the number of Dyck paths of size $n$ with two colors for up steps.
Thus the cardinality of $S_{1}(0,1)$ is given by $2^{n}|\mathtt{Dyck}(n;1)|$.
\item The number $S_{1}(n;1,0)$ is given by the number of Dyck paths with two colors $\{U,u\}$ for an up step 
and single color $D$ which avoid the pattern $Du$.
Thus, $S_{1}(n;1,0)$ is given by large Schr\"oder number.
\item The number $S_{1}(n;1,1)$ is given by the number of Dyck paths with two colors $\{U,u\}$ for an up step
and two colors $\{D,d\}$ for a down step which avoid the pattern $ud$.
\end{enumerate}
\end{prop}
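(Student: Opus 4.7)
The plan is to specialize the weight $\mathrm{wt}(\mu) = (m'+1)^{\mathrm{Peak}(\mu)}(u'+1)^{\mathrm{Peak}(\mu)}(m'u'+u'+1)^{n-\mathrm{Peak}(\mu)}$ at each of the four pairs and compare $S_1(n;m',u')=\sum_{\mu\in\mathtt{Dyck}(n;1)}\mathrm{wt}(\mu)$ with the claimed combinatorial count. Cases (1) and (2) are immediate: $(m',u')=(0,0)$ gives $\mathrm{wt}(\mu)=1$, hence $S_1(n;0,0)=|\mathtt{Dyck}(n;1)|$, and $(m',u')=(0,1)$ gives $\mathrm{wt}(\mu)=2^{\mathrm{Peak}(\mu)}\cdot 2^{n-\mathrm{Peak}(\mu)}=2^n$ independent of $\mu$, hence $S_1(n;0,1)=2^n|\mathtt{Dyck}(n;1)|$, which is manifestly the number of Dyck paths with two colors assigned to each up step.

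For case (3), $(m',u')=(1,0)$ reduces the weight to $2^{\mathrm{Peak}(\mu)}$. To match $\sum_\mu 2^{\mathrm{Peak}(\mu)}$ with the number of bicolored Dyck paths (up step in $\{U,u\}$, down step $D$ only) avoiding the pattern $Du$, I would first observe that an up step may be colored $u$ precisely when it is not immediately after $D$, so the number of valid colorings of a fixed shape $\mu$ equals $2^{n-v(\mu)}=2^{n-\mathrm{Peak}(\mu)+1}$, using that the number $v(\mu)$ of $DU$ valleys equals $\mathrm{Peak}(\mu)-1$ for nonempty Dyck paths. The Narayana symmetry $\#\{\mu\in\mathtt{Dyck}(n;1):\mathrm{Peak}(\mu)=k\}=\#\{\mu:\mathrm{Peak}(\mu)=n+1-k\}$ then yields $\sum_\mu 2^{n-\mathrm{Peak}(\mu)+1}=\sum_\mu 2^{\mathrm{Peak}(\mu)}=S_1(n;1,0)$. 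Identification with the large Schr\"oder number follows from the classical identity $|\mathtt{Sch}(n)|=\sum_\mu 2^{\mathrm{Peak}(\mu)}$, obtained by optionally replacing each peak $UD$ by a horizontal step $H$.

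Case (4) with $(m',u')=(1,1)$ is treated analogously: the weight becomes $4^{\mathrm{Peak}(\mu)}3^{n-\mathrm{Peak}(\mu)}$. A direct count of the bicolored Dyck paths subject to the forbidden-pattern constraint contributes a local factor of $3=2\cdot 2-1$ at each constrained adjacency (the four joint colorings of the pair lose one) and $2$ at each step not belonging to such a pair; since the constrained adjacencies occur at pairwise disjoint pairs of steps, the per-shape count factors as $3^{A(\mu)}4^{n-A(\mu)+\epsilon}$ for the appropriate adjacency statistic $A$ (which, using the $\mathrm{Val}=\mathrm{Peak}-1$ identity, is expressible in terms of $\mathrm{Peak}(\mu)$). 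Applying the same Narayana reflection as in case (3) then rewrites this expression as $\sum_\mu 4^{\mathrm{Peak}(\mu)}3^{n-\mathrm{Peak}(\mu)}=S_1(n;1,1)$.

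The main obstacle lies in case (4): one must pin down precisely which adjacencies are constrained by the forbidden pattern and verify that distinct constrained pairs share no step, so that the local factors multiply cleanly rather than interact. Everything else -- the specialization, the per-shape count, and the Narayana reflection -- is short and essentially the same mechanism used in case (3); the delicate point is simply getting the bookkeeping at the forbidden-pattern adjacencies right so that the exponents of $3$ and $4$ line up with the weight coming from the recurrence for $\chi_n(s)$.
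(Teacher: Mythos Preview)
Your treatment of (1) and (2) matches the paper's exactly.

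For (3) your argument is correct but genuinely different from the paper's. You count the bicolored paths avoiding $Du$ directly per shape (getting $2^{n-\mathrm{Peak}(\mu)+1}$) and then invoke Narayana symmetry to flip this to $2^{\mathrm{Peak}(\mu)}$. The paper instead writes down a generating function $\gamma(r)$ for the bicolored objects and checks that it satisfies the Schr\"oder recurrence $\gamma=1+r(\gamma+\gamma^{2})$. Your route is shorter and more bijective; the paper's route avoids the Narayana input but needs the functional equation.

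For (4) there is a real gap. If ``avoid the pattern $ud$'' is read literally as forbidding a lowercase up step immediately followed by a lowercase down step, the constrained adjacencies are exactly the peaks, they are pairwise disjoint, and the per-shape count is $3^{\mathrm{Peak}(\mu)}4^{n-\mathrm{Peak}(\mu)}$ (so your $A=\mathrm{Peak}$ and $\epsilon=0$). But Narayana reflection $k\mapsto n{+}1{-}k$ sends $3^{k}4^{n-k}$ to $3^{\,n+1-k}4^{\,k-1}=\tfrac{3}{4}\cdot 4^{k}3^{n-k}$, so summing gives $\tfrac{3}{4}\,S_{1}(n;1,1)$, not $S_{1}(n;1,1)$. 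Already at $n=1$ the bicolored count avoiding $ud$ is $3$, while $S_{1}(1;1,1)=4$. The trick that worked in (3) relies on the extra ``$+1$'' in the exponent $n-\mathrm{Peak}(\mu)+1$; here that offset is absent, and the reflection does not line up.

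The paper's own proof of (4) proceeds differently: it first applies Narayana to $S_{1}$ itself, rewriting $\sum_{\mu}4^{\mathrm{Peak}(\mu)}3^{\,n-\mathrm{Peak}(\mu)}$ as $\sum_{\mu}4^{\,n-\mathrm{Val}(\mu)}3^{\mathrm{Val}(\mu)}$, and then reads the factor $3=4-1$ as a single forbidden coloring at each \emph{valley}. In other words, the combinatorial object the proof actually produces is bicolored Dyck paths avoiding the valley pattern $du$ (lowercase down followed by lowercase up), and the ``$ud$'' in the proposition's wording appears to be a slip. If you want to salvage your outline, place the constraint at valleys rather than peaks; the per-shape count is then $3^{\mathrm{Val}(\mu)}4^{\,n-\mathrm{Val}(\mu)}$, which matches $S_{1}(n;1,1)$ directly after the paper's reflection with no leftover factor.
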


The relations among $S_{1}(n;m',u')$, $m',u'\in\{0,1\}$ and the sequence numbers in OEIS \cite{Slo}
are summarized below. 
\begin{gather*}
\begin{CD}
\raisebox{-0.4\totalheight}{\shortstack{$S_{1}(n;0,1)$ \\ A151374}} 
@<{m'=0}<< \raisebox{-0.4\totalheight}{\shortstack{$S_{1}(n;1,1)$ \\ A103211}}  \\
@V{u'=0}VV @VV{u'=0}V \\
\raisebox{-0.4\totalheight}{\shortstack{$S_{1}(n;0,0)$ \\ A000108}} 
@<{m'=0}<< \raisebox{-0.4\totalheight}{\shortstack{$S_{1}(n;1,0)$ \\ A006318}} 	
\end{CD}
\end{gather*}

\begin{proof}[Proof of Proposition \ref{prop:Dycksp}]
(1). From the definition (\ref{eq:defwtmu}) of weight, we have 
$\mathrm{wt}(\mu)=1$ for $m'=u'=0$.
Therefore, it is obvious that 
the value $S_{1}(n;0,0)$ is equal to the number of Dyck paths of size $n$.

(2). When $m'=0$ and $u'=1$, we have $\mathrm{wt}(\mu)=2^{n}$. 
Note that the system size $n$ is equal to the number of up steps.
Thus, the cardinality of $S_{1}(n;0,1)$ is given by $2^{n}|\mathtt{Dyck}(n,1)|$.	

(3). When $m'=1$ and $u'=0$, we have $\mathrm{wt}(\mu)=2^{\mathrm{Peak}(\mu)}$.
This means that we have two choices for each peak.
Since a peak corresponds to the pattern $UD$ in a Dyck path, we may replace the peak $UD$ 
by a horizontal step $H=(1,1)$. The newly obtained path never goes below $y=x$.
The new path is nothing but a Schr\"oder path. 
Therefore, the cardinality of $S_{1}(n;1,0)$ is given by large Schr\"oder numbers.	

We count the Dyck paths which satisfy the conditions in (3).
Let $\gamma(r):=\sum_{n\ge0}A_nr^{n}$ be the generating function such that $A_{n}$ is the 
number of such Dyck paths of size $n$. We show that $\gamma(r)$ satisfies the same recurrence 
relation as the one for Schr\"oder paths.
We have two Dyck paths of size one: $UD$ and $ud$.
For $n\ge2$, we insert Dyck paths between $U$ and $D$ or right to $D$.
Note that the first up step in a Dyck path has a color $U$ or $u$.
Since a Dyck path is a prime or not, we have a recurrence relation for $\gamma(r)$:
\begin{align*}
\gamma(r)&=1+r(2\gamma(r)+\gamma(r)(\gamma(r)-1)), \\
&=1+r(\gamma(r)+\gamma(r)^2).
\end{align*}
It is easy to see that this recurrence relation is the same as the one for Schr\"oder paths.

(4). 	When $m'=u'=1$, we have 
\begin{align}
\label{eq:wtval}
\begin{split}
\mathrm{wt}(\mu)&=4^{\mathrm{Peak}(\mu)}3^{n-\mathrm{Peak}}, \\
&=4^{\mathrm{Val}(\mu)+1}3^{n-1-\mathrm{Val}(\mu)},
\end{split}
\end{align}
where $\mathrm{Val}(\mu)$ is the number of valleys in $\mu$, that is, the number 
of $DU$ in $\mu$. Note that $\mathrm{Val}(\mu)=\mathrm{Peak}(\mu)-1$.
Recall that a Narayana number $\mathtt{Nar}(n,k)$ is the number of Dyck paths of size 
$n$ with $k$ peaks. 
By the symmetry $k\leftrightarrow n+1-k$, Narayana numbers satisfy 
$\mathtt{Nar}(n,k)=\mathrm{Nar}(n,n+1-k)$.
Thus, the number of Dyck paths of size $n$ with $p$ valleys is equal to 
the one of Dyck paths with $n-1-p$ valleys.
Therefore, we have, from Eq. (\ref{eq:wtval}) 
\begin{align*}
\mathrm{wt}(\mu)=4^{n-\mathrm{Val}(\mu)}3^{\mathrm{Val}(\mu)}.
\end{align*}
Since $3=4-1$, this expression implies that the number $|S_{1}(n;1,1)|$ is the number 
of Dyck paths which have two colors  $\{U,u\}$ and $\{D,d\}$ on each up and down step 
respectively and avoid the pattern $ud$.
\end{proof}

\subsection{Recurrence relation for empty dimers}
We study a formal power series coming from the recurrence relation for empty dimers, 
and show that it has formulae in terms of $k$-Dyck paths and $k$-Schr\"oder paths 
of type B.
\begin{defn}
\label{defn:chiast}
Let $\chi^{\ast}(p,q)$ be a formal power series satisfying the following 
recurrence relation:
\begin{align}
\label{eq:chiast0}
1-(pq+m-1)\chi^{\ast}(p,q)-pq^2(m(u-1)+1)\chi^{\ast}(p,q)\chi^{\ast}(pq,q)=0.
\end{align}
\end{defn}

We define a weight for a Dyck path $\mu\in\mathtt{Dyck}(n;1)$.
Recall that any Dyck path $\mu$ is written as a concatenation of 
two paths $\mu_{1}\circ\mu_{2}$ such that 
$\mu_{1}$ is a prime path.
We denote by $|\mu|$ the size of Dyck path $\mu$.
We define the weight for a Dyck paths recursively as follows.

We fist define the weight for a zig-zag Dyck path, {\it i.e.}, 
$\mu=(UD)^{n}$:
\begin{align*}
\mathrm{wt}^{\ast}((UD)^{n}):=(m'+q(m'u'+u'+1))^{n},
\end{align*}
where $m'$ and $u'$ are defined in Definition \ref{defn:mudash}.
Then, for a general Dyck path $\mu:=\mu_1\circ\mu_2$ with $\mu_1$ prime, 
we define 
\begin{align*}
\mathrm{wt}^{\ast}(\mu):=\mathrm{wt}^{\ast}(\mu_1)\mathrm{wt}^{\ast}(\mu_2).
\end{align*}
Let $\widetilde{\mu}_{1}$ be a Dyck path of size $|\mu_1|-1$ 
obtained from a prime Dyck path $\mu_1$ by deleting the first up step and the last 
down step.
Then, we define 
\begin{align*}
\mathrm{wt}^{\ast}(\mu_{1}):=q^{|\mu_{1}|}(m'u'+u'+1)\mathrm{wt}^{\ast}(\widetilde{\mu}_{1}).
\end{align*}

\begin{prop}
The formal power series $\chi^{\ast}(p,q)$ is expressed in terms of 
Dyck paths:
\begin{align}
\label{eq:chiast}
\chi^{\ast}(p,q)=\sum_{0\le n}\genfrac{}{}{}{}{(-pq)^{n}}{(m-1)^{2n+1}}
\sum_{\mu\in\mathtt{Dyck}(n;1)}\mathrm{wt}^{\ast}(\mu).
\end{align}
\end{prop}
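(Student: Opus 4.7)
The plan is to show that both sides of (\ref{eq:chiast}) satisfy the same recurrence with the same initial data, using the fact that (\ref{eq:chiast0}) uniquely determines $\chi^{\ast}$ as a formal power series in $p$. Set $\alpha := m'u' + u' + 1 = m(u-1) + 1$ and $A_n(q) := \sum_{\mu \in \mathtt{Dyck}(n;1)} \mathrm{wt}^{\ast}(\mu)$, so that the right-hand side of (\ref{eq:chiast}) equals $\sum_{n \geq 0} c_n(q)\, p^n$ with $c_n(q) := (-q)^n A_n(q)/(m')^{2n+1}$. Setting $p = 0$ in (\ref{eq:chiast0}) forces $\chi^{\ast}(0,q) = 1/m'$, matching $c_0 = A_0/m' = 1/m'$ since $A_0 = \mathrm{wt}^{\ast}(\emptyset) = 1$; for $n \geq 1$, the coefficient of $p^n$ in (\ref{eq:chiast0}) is a linear equation in $c_n$ with leading coefficient $m' \neq 0$, so $\chi^{\ast}$ is uniquely determined by these relations.

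First I would derive the combinatorial recurrence for $A_n$ from the first-return decomposition. Every non-empty $\mu \in \mathtt{Dyck}(n;1)$ factors uniquely as $\mu = \mu_1 \circ \mu_2$ with $\mu_1$ prime of size $\ell \in \{1, \ldots, n\}$ and $\mu_2 \in \mathtt{Dyck}(n - \ell;1)$. For $\ell = 1$, i.e.\ $\mu_1 = UD$, the zig-zag base case yields $\mathrm{wt}^{\ast}(UD) = m' + q\alpha$; for $\ell \geq 2$, writing $\mu_1 = U\widetilde{\mu}_1 D$, the prime rule gives $\mathrm{wt}^{\ast}(\mu_1) = q^{\ell}\alpha \, \mathrm{wt}^{\ast}(\widetilde{\mu}_1)$. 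Multiplicativity of $\mathrm{wt}^{\ast}$ and summation over $\ell$ then yield
\begin{align*}
A_n = (m' + q\alpha) A_{n-1} + \alpha \sum_{\ell = 2}^{n} q^{\ell} A_{\ell-1} A_{n-\ell} = m' A_{n-1} + \alpha \sum_{\ell = 1}^{n} q^{\ell} A_{\ell-1} A_{n-\ell}.
\end{align*}

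Next I would substitute $\chi^{\ast}(p,q) = \sum_n c_n p^n$ (so that $\chi^{\ast}(pq,q) = \sum_n c_n q^n p^n$) into (\ref{eq:chiast0}) and extract the coefficient of $p^n$ for $n \geq 1$, obtaining
\begin{align*}
m' c_n + q c_{n-1} + q^2 \alpha \sum_{j = 0}^{n-1} c_j c_{n-1-j} \, q^{n-1-j} = 0.
\end{align*}
Multiplying through by $(m')^{2n}/(-q)^n$, using $c_k = (-q)^k A_k/(m')^{2k+1}$, and reindexing $\ell = n - j$, converts this identity into precisely the combinatorial recurrence for $A_n$ derived above. The two series in (\ref{eq:chiast}) therefore agree by uniqueness. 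The main subtlety lies in the base case $\ell = 1$: only by invoking the zig-zag convention $\mathrm{wt}^{\ast}(UD) = m' + q\alpha$ (rather than the naive prime rule, which would produce only $q\alpha \, \mathrm{wt}^{\ast}(\emptyset)$) does one obtain the $m' A_{n-1}$ summand needed to match the $m' \chi^{\ast}$ piece of the functional equation; the remaining manipulations are routine bookkeeping of powers of $m'$ and $q$.
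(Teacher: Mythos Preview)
Your proposal is correct and follows essentially the same approach as the paper: both verify that the Dyck-path sum satisfies the defining functional equation (\ref{eq:chiast0}) by exploiting the first-return decomposition $\mu=\mu_1\circ\mu_2$ with the case split $|\mu_1|=1$ versus $|\mu_1|\ge 2$, and both rely on the same observation that the zig-zag convention for $\mathrm{wt}^{\ast}(UD)$ supplies the extra $m'$-term. The only cosmetic difference is that the paper carries out the verification directly at the level of the generating function $\widetilde{\chi}^{\ast}(p,q):=\chi^{\ast}(-p,q)$, while you extract the coefficient of $p^{n}$ and work with the recurrence for $A_n$; the two computations are line-by-line equivalent.
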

\begin{proof}
Let $\widetilde{\chi}^{\ast}(p,q)=\chi^{\ast}(-p,q)$ given by Eq. (\ref{eq:chiast}).
Then, the generating function $\widetilde{\chi}^{\ast}(p,q)$ satisfies the following 
functional equation:
\begin{align*}
\widetilde{\chi}^{\ast}(p,q)&=\genfrac{}{}{}{}{1}{m'}+\genfrac{}{}{}{}{1}{m'^{2}}(m'+q(m'u'+u'+1))pq\widetilde{\chi}^{\ast}(p,q) \\
&\qquad+\genfrac{}{}{}{}{1}{m'}(m'u'+u'+1)pq^2(\widetilde{\chi}^{\ast}(pq,q)-\genfrac{}{}{}{}{1}{m'})\widetilde{\chi}^{\ast}(p,q), \\
&=\genfrac{}{}{}{}{1}{m'}+\genfrac{}{}{}{}{pq}{m'}\widetilde{\chi}^{\ast}(p,q)
+\genfrac{}{}{}{}{1}{m'}(m'u'+u'+1)pq^2\widetilde{\chi}{\ast}(pq,q)\widetilde{\chi}^{\ast}(p,q).
\end{align*}
The first term in the first line corresponds to the empty path and 
the second term corresponds to a path $\mu=\mu_{1}\circ\mu_{2}$ such that 
$\mu_{1}=UD$.
The third term in the second line corresponds to a path $\mu=\mu_{1}\circ\mu_{2}$ 
such that $\mu_1$ is prime and $|\mu_{1}|\ge 2$.
Then, it is obvious that $\chi^{\ast}(p,q)=\widetilde{\chi}^{\ast}(-p,q)$ satisfies 
the recurrence relation (\ref{eq:chiast0}). 
This completes the proof.
\end{proof}

\begin{prop}
Set $q=1$. The formal power series $\chi^{\ast}(p,q=1)$ is given
by
\begin{align}
\label{eq:chiastq1}
[p^{n}]\chi^{\ast}(p,q=1)
=\genfrac{}{}{}{}{(-1)^{n}}{n}
\sum_{t=0}^{n}\genfrac{}{}{}{}{(m'u'+u'+1)^{n-t}}{m'^{2n+1-t}}
\genfrac{(}{)}{0pt}{}{n}{t}\genfrac{(}{)}{0pt}{}{2n-t}{n-1}.
\end{align}
\end{prop}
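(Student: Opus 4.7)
The plan is to reduce the functional equation (\ref{eq:chiast0}) to a single-variable equation at $q=1$ and then apply the Lagrange inversion theorem (Theorem \ref{thrm:Lag}).

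First I would observe that setting $q=1$ eliminates the dilation $pq \to p$, so that (\ref{eq:chiast0}) becomes a quadratic equation in $f:=\chi^{\ast}(p,q=1)$:
\begin{align*}
1-(p+m')f-p\,c\,f^{2}=0,
\end{align*}
where $c:=m'u'+u'+1=m(u-1)+1$. The next step is to convert this into a form amenable to Lagrange inversion. The natural substitution is $v:=1-m'f$ (note $v|_{p=0}=0$), which turns the quadratic into
\begin{align*}
p=\frac{v\,m'^{2}}{(1-v)(m'+c-cv)}=\frac{v}{\phi(v)}, \qquad \phi(v):=\frac{(1-v)(m'+c-cv)}{m'^{2}},
\end{align*}
exactly the setup $p=v/\phi(v)$ with $\phi(0)=(m'+c)/m'^{2}\neq 0$ needed for the second form of Theorem \ref{thrm:Lag}.

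Then I apply Lagrange inversion to get $[p^{n}]v=\tfrac{1}{n}[y^{n-1}]\phi(y)^{n}$. The key algebraic trick at this point is the expansion
\begin{align*}
(m'+c-cy)^{n}=\sum_{t=0}^{n}\binom{n}{t}m'^{t}c^{n-t}(1-y)^{n-t},
\end{align*}
obtained by grouping $c-cy=c(1-y)$. Multiplying by the factor $(1-y)^{n}$ already present in $\phi(y)^{n}$, the integrand becomes a sum of pure powers $(1-y)^{2n-t}$, and reading off $[y^{n-1}]$ yields a single binomial coefficient $(-1)^{n-1}\binom{2n-t}{n-1}$ for each $t$. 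Combining everything,
\begin{align*}
[p^{n}]v=\frac{(-1)^{n-1}}{n\,m'^{2n}}\sum_{t=0}^{n}\binom{n}{t}\binom{2n-t}{n-1}m'^{t}c^{n-t}.
\end{align*}
Finally, translating back via $f=(1-v)/m'$ gives $[p^{n}]f=-m'^{-1}[p^{n}]v$ for $n\ge 1$, which produces exactly (\ref{eq:chiastq1}) after absorbing the $m'^{t}$ factor into the denominator $m'^{2n+1-t}$.

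The main technical point is not the inversion itself but finding the right substitution $v=1-m'f$ that puts the quadratic into the shape $p=v/\phi(v)$ with $\phi(v)$ a polynomial; once that is in place, the ``double factorization'' of $\phi(y)^{n}$ into a single power $(1-y)^{2n-t}$ is the only combinatorial step, and it is what produces the $\binom{2n-t}{n-1}$ appearing in the claim. No genuine obstacle beyond careful bookkeeping of signs and powers of $m'$ is anticipated.
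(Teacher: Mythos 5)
Your proof is correct and follows essentially the same route as the paper: reduce the functional equation at $q=1$ to a quadratic and apply Theorem \ref{thrm:Lag}, then expand $\bigl(m'(1+y)+c(1+y)^2\bigr)^n$ binomially to read off $\binom{2n-t}{n-1}$. The only difference is cosmetic — the paper normalizes via $\chi'=m'\chi^{\ast}$ and uses the first form $f=1+xH(f)$, while you use $v=1-m'\chi^{\ast}=1-\chi'$ and the second form $p=v/\phi(v)$; these are the same computation up to the sign flip $y\mapsto -y$.
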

\begin{proof}
From the recurrence relation (\ref{defn:chiast}) with $q=1$, 
we have 
\begin{align*}
\chi'(p)=1+p H(\chi'(p)),
\end{align*}
where $\chi'(p)=m'\chi^{\ast}(p)$ and
\begin{align*}
H(x):=-\genfrac{}{}{}{}{1}{m'}x-\genfrac{}{}{}{}{m'u'+u'+1}{m'^2}x^{2}.
\end{align*}
We apply Lagrange inversion theorem to $H(x)$, 
and obtain Eq. (\ref{eq:chiastq1}).
\end{proof}

We generalize the above result to the case of $k$-Dyck paths.
For this, we modify the recurrence relation in Definition \ref{defn:chiast} 
to fit the case of $k$-Dyck paths.
The formal power series has also an expression in terms of $k$-Schr\"oder 
paths of type B.
\begin{defn}
Fix $k\ge1$.
Let $\chi_{k}^{\ast}(p,q)$ be a formal power series satisfying the following 
recurrence relation:
\begin{align}
\label{eq:chiempty}
1-(m-1)\chi_{k}^{\ast}(p,q)-pq\prod_{j=0}^{k-1}\chi_{k}^{\ast}(pq^j,q)-pq^2(m(u-1)+1)\prod_{j=0}^{k}\chi_{k}^{\ast}(pq^j,q)=0.
\end{align} 
\end{defn}

We define the weight for a $k$-Dyck path of size $n$.
\begin{align*}
\mathrm{wt}^{\ast}((UD^{k})^{n}):=(m'+q(m'u'+u'+1))^n
\end{align*}
When a $k$-Dyck path $\mu$ is written as a concatenation of two 
$k$-Dyck paths, $\mu:=\mu_{1}\circ\mu_{2}$ with $\mu_1$ prime, 
the weight of $\mu$ is defined as 
\begin{align*}
\mathrm{wt}^{\ast}(\mu):=\mathrm{wt}^{\ast}(\mu_1)\mathrm{wt}^{\ast}(\mu_{2}).
\end{align*} 
Let $\mu_{1}$ be a prime $k$-Dyck path.
Then $\mu_1$ has the following form:
\begin{align*}
\mu_{1}=U\circ \lambda_{1}\circ D \circ \lambda_{2} \circ \ldots \circ D\circ \lambda_{k}\circ D,
\end{align*}
where paths $\lambda_{i}$, $1\le i\le k$, are $k$-Dyck paths.
The weight $\mathrm{wt}^{\ast}(\mu_1)$ is defined by
\begin{align}
\label{eq:wtchiDyck}
\begin{split}
\mathrm{wt}^{\ast}(\mu_1):=
\begin{cases}
\displaystyle
q(m'u'+u'+1)\prod_{i=1}^{k}q^{(k+1-i)|\lambda_{i}|}\mathrm{wt}^{\ast}(\lambda_{i}), & \lambda_{1}\neq\emptyset, \\
\displaystyle
(m'+q(m'u'+u'+1))\prod_{i=2}^{k}q^{(k+1-i)|\lambda_{i}|}\mathrm{wt}^{\ast}(\lambda_{i}), & \lambda_{1}=\emptyset.
\end{cases}
\end{split}
\end{align}
Note that if we set $k=1$, the weight defined above coincides with the definition in the case of Dyck paths.	

\begin{prop}
\label{prop:chiinDyck}
The formal power series $\chi_{k}^{\ast}(p,q)$ is given by 
\begin{align}
\label{eq:empchi}
\chi_{k}^{\ast}(p,q)
=\sum_{0\le n}\genfrac{}{}{}{}{(-pq)^{n}}{(m-1)^{(k+1)n+1}}
\sum_{\mu\in\mathtt{Dyck}(n;k)}\mathrm{wt}^{\ast}(\mu)q^{\mathrm{Area}(\mu)},
\end{align}
where $\mathrm{Area}(\mu)$ is the number of unit boxes below $\mu$ and above the lowest path $\mu_{0}$.
\end{prop}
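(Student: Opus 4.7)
The strategy is to show that the right-hand side of (\ref{eq:empchi}), which I denote $X(p,q)$, satisfies the defining functional equation (\ref{eq:chiempty}); together with the matching constant term $X(0,q) = 1/(m-1)$, this identifies $X$ with $\chi_k^\ast$ as a formal power series in $p$. The plan is to proceed via the unique prime decomposition of $k$-Dyck paths, generalizing the Dyck ($k=1$) argument in the preceding proposition.

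First, every non-empty $\mu \in \mathtt{Dyck}(n;k)$ factors uniquely as $\mu = \mu_1 \circ \mu'$ with $\mu_1$ prime and $\mu'$ a possibly empty $k$-Dyck path. Because $\mathrm{wt}^\ast$ is multiplicative and the area statistic is additive under concatenation, the sum in (\ref{eq:empchi}) organizes as $m'X(p,q) = 1 + P(p,q)\cdot m'X(p,q)$, where $P(p,q)$ is the generating sum over primes of size at least one with the same normalization. It therefore suffices to show
\begin{align*}
-P(p,q)\cdot m'X(p,q) = pq\prod_{j=0}^{k-1}X(pq^j,q) + pq^2(m'u'+u'+1)\prod_{j=0}^{k}X(pq^j,q).
\end{align*}

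Next, I apply the further decomposition $\mu_1 = U\circ\lambda_1\circ D\circ\lambda_2\circ\cdots\circ D\circ\lambda_k\circ D$. A direct geometric argument, tracing the column heights of $\mu_1$ against those of the lowest path $(UD^k)^{|\mu_1|}$ and noting that $\lambda_i$ is lifted to height $i$ inside $\mu_1$ over a horizontal span of width $k|\lambda_i|$ while the baseline reaches only height $i-1$ over the same columns, yields the area identity
\begin{align*}
\mathrm{Area}(\mu_1) = \sum_{i=1}^{k}(k+1-i)|\lambda_i| + \sum_{i=1}^{k}\mathrm{Area}(\lambda_i).
\end{align*}
Together with the weight rule (\ref{eq:wtchiDyck}), I split $P = P_A + P_B$, where $P_A$ collects the contributions carrying the outer factor $q(m'u'+u'+1)$ (Case~1 together with the second summand of Case~2, so that $\lambda_1$ ranges freely) and $P_B$ collects those carrying the outer factor $m'$ (the first summand of Case~2, which forces $\lambda_1 = \emptyset$). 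For each $\lambda_i$, combining its normalization $(-pq)^{|\lambda_i|}/m'^{(k+1)|\lambda_i|}$ with its weight factor $q^{(k+1-i)|\lambda_i|}$ and its area factor $q^{\mathrm{Area}(\lambda_i)}$ and then summing over $\lambda_i\in\mathtt{Dyck}(\ast;k)$ produces $m'X(pq^{k+1-i},q)$. Reindexing $j=k+1-i$ and folding in the outer $m'X(p,q)$ recovers the two desired $X$-products appearing on the right-hand side of (\ref{eq:chiempty}).

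The main obstacle is the bookkeeping of $q$-exponents in the previous step: verifying that the area identity combined with the weight factors $q^{(k+1-i)|\lambda_i|}$ and the overall normalization $(-pq)^{|\mu_1|}$ conspires to produce exactly the consecutive shifted arguments $pq^0, pq^1,\ldots, pq^k$ prescribed by the recurrence. A secondary subtle point is that in the $P_B$ branch the product runs only over $i = 2,\ldots,k$ (since $\lambda_1$ is forced empty), so that factorization gives $\prod_{j=1}^{k-1}m'X(pq^j,q)$; after multiplication by the outer $m'X(p,q)$ this produces the length-$k$ product $\prod_{j=0}^{k-1}X(pq^j,q)$ in (\ref{eq:chiempty}), explaining the asymmetric lengths of the two products. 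Once the $q$-matching is confirmed, assembling $P_A$ and $P_B$ and comparing to the recurrence is a routine algebraic identity.
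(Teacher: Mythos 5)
Your overall route --- verifying the functional equation (\ref{eq:chiempty}) for the right-hand side via the prime decomposition $\mu=\mu_1\circ\mu'$ and the inner decomposition $\mu_1=U\lambda_1D\lambda_2D\cdots D\lambda_kD$ --- is the same as the paper's (the paper merely asserts the resulting recurrence ``as in the case of $k=1$'' without the bookkeeping), and your area identity
$\mathrm{Area}(\mu_1)=\sum_{i=1}^{k}(k+1-i)|\lambda_i|+\sum_{i=1}^{k}\mathrm{Area}(\lambda_i)$
is correct. The genuine problem is in the assembly step. By that very identity, the summand $\mathrm{wt}^{\ast}(\mu)\,q^{\mathrm{Area}(\mu)}$ attaches the factor $q^{(k+1-i)|\lambda_i|}$ to $\lambda_i$ \emph{twice}: once through the weight rule (\ref{eq:wtchiDyck}) and once again through $q^{\mathrm{Area}(\mu_1)}$. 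Your bookkeeping (``its weight factor $q^{(k+1-i)|\lambda_i|}$ and its area factor $q^{\mathrm{Area}(\lambda_i)}$'') credits each $\lambda_i$ with only one copy. Only the one-copy count yields the shifted argument $pq^{k+1-i}$ and hence the products $\prod_{j}X(pq^{j},q)$ required by (\ref{eq:chiempty}); if both copies are kept, as the stated summand demands, the shifts come out wrong and the recurrence is not satisfied. So either you honor your area identity and the step ``produces $m'X(pq^{k+1-i},q)$'' fails, or you drop one copy and you are no longer summing the quantity in (\ref{eq:empchi}).

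To see that the recurrence really forces the one-copy version, take $(k,n)=(2,2)$ with $m'=1$, $u'=0$: the functional equation gives $[p^{2}]\chi_{2}^{\ast}(-p,q)=q^{2}\left(1+3q+3q^{2}+2q^{3}+q^{4}\right)$, which equals $q^2\sum_{\mu\in\mathtt{Dyck}(2;2)}\mathrm{wt}^{\ast}(\mu)$ \emph{without} the factor $q^{\mathrm{Area}(\mu)}$, whereas including $q^{\mathrm{Area}(\mu)}$ yields $q^{2}\left(1+2q+2q^{2}+2q^{3}+q^{4}+q^{5}+q^{6}\right)$. This also matches the $k=1$ formula (\ref{eq:chiast}), which carries no area factor at all even though the weight definitions agree at $k=1$. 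In short, the defect originates in the statement of the proposition (the factor $q^{\mathrm{Area}(\mu)}$ is redundant with the $q^{(k+1-i)|\lambda_i|}$'s already built into $\mathrm{wt}^{\ast}$, and exactly one of the two must be removed), but as written your argument is internally inconsistent: what it actually proves is the corrected identity, not the displayed one, and you should say so explicitly rather than present it as a proof of (\ref{eq:empchi}).
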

\begin{proof}
Let $\widetilde{\chi}^{\ast}(p,q):=\chi_{k}^{\ast}(-p,q)$ given by Eq. (\ref{eq:empchi}).
Then, as in the case of $k=1$, the formal power series $\widetilde{\chi}^{\ast}(p,q)$ satisfies the following recurrence relation:
\begin{align*}
\widetilde{\chi}^{\ast}(p,q)&=\genfrac{}{}{}{}{1}{m'}
+\genfrac{}{}{}{}{1}{m'^{2}}(m'+q(m'u'+u'+1))pq\prod_{j=0}^{k-1}\widetilde{\chi}^{\ast}(pq^{j},q) \\
&\qquad+\genfrac{}{}{}{}{1}{m'}(m'u'+u'+1)pq^2(\widetilde{\chi}^{\ast}(pq^{k},q)
-\genfrac{}{}{}{}{1}{m'})\prod_{j=0}^{k-1}\widetilde{\chi}^{\ast}(pq^{j},q), \\
&=\genfrac{}{}{}{}{1}{m'}+\genfrac{}{}{}{}{pq}{m'}\prod_{j=0}^{k-1}\widetilde{\chi}^{\ast}(pq^{j},q)
+\genfrac{}{}{}{}{1}{m'}(m'u'+u'+1)pq^2\prod_{j=0}^{k}\widetilde{\chi}{\ast}(pq^{j},q).
\end{align*}
It is obvious that the formal power series $\chi_{k}^{\ast}(p,q)$ satisfies Eq. (\ref{eq:chiempty}).
The expression (\ref{eq:empchi}) is a formal power series for the recurrence relation (\ref{eq:chiempty}).
\end{proof}

The formal power series $\chi_{k}^{\ast}(p,q)$ has another 
expression in terms of $k$-Schr\"oder paths of type B.

Let $\mu$ be a $k$-Schr\"oder path of type B of size $n$.
We denote by $\widetilde{\mu}$ a $k$-Dyck path obtained 
from $\mu$ by replacing a $H$ by a $UD$.

\begin{prop}
The formal power series $\chi_{k}^{\ast}$ satisfies 
\begin{align}
\label{eq:chiemptySchB}
\chi_{k}^{\ast}(p,q)=\sum_{0\le n}\genfrac{}{}{}{}{(-pq)^{n}}{(m-1)^{(k+1)n+1}}
\sum_{\mu\in\mathtt{Sch}^{B}(n;k)}
m'^{N(H)}(m'u'+u'+1)^{N(U)}q^{d(\mu)},
\end{align}
where $N(H)$ and $N(U)$ are the numbers of horizontal and up steps in $\mu$ respectively, 
and 
\begin{align}
\label{eq:SchArea}
d(\mu):=\mathrm{Area}(\widetilde{\mu})+n-N(H).
\end{align}
\end{prop}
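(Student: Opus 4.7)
The plan is to reduce the claimed identity to Proposition \ref{prop:chiinDyck} using the natural projection $\sigma:\mathtt{Sch}^{B}(n;k)\to\mathtt{Dyck}(n;k)$ that sends a $k$-Schr\"oder path $\mu$ to the $k$-Dyck path $\widetilde{\mu}$ obtained by replacing each horizontal step $H$ with the consecutive pair $UD$. Since $H=(1,1)$ and the displacement of $UD$ is also $(1,1)$, this map preserves the endpoint $(kn,n)$ and the constraint $y\ge x/k$, so it is well-defined. I first describe the fiber $\sigma^{-1}(\nu)$ over a fixed $\nu\in\mathtt{Dyck}(n;k)$: every peak (occurrence of $UD$) in $\nu$ either persists as $UD$ in $\mu$ or originates from a collapsed $H$ in $\mu$, and these choices are independent at each peak. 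Hence $\sigma^{-1}(\nu)$ is in canonical bijection with the $2^{\mathrm{Peak}(\nu)}$ subsets $S$ of the peaks of $\nu$, and for the $\mu$ corresponding to $S$ with $|S|=j$ we have $N(H)=j$, $N(U)=n-j$, $\widetilde{\mu}=\nu$, and hence $d(\mu)=\mathrm{Area}(\nu)+n-j$.

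Next I partition the inner sum of (\ref{eq:chiemptySchB}) according to the image $\nu$ and collapse the sum over $S$ via the binomial theorem:
\begin{align*}
\sum_{\mu\in\sigma^{-1}(\nu)}m'^{N(H)}(m'u'+u'+1)^{N(U)}q^{d(\mu)}
=q^{\mathrm{Area}(\nu)}\bigl(q(m'u'+u'+1)\bigr)^{n-\mathrm{Peak}(\nu)}\bigl(m'+q(m'u'+u'+1)\bigr)^{\mathrm{Peak}(\nu)}.
\end{align*}
Comparison with (\ref{eq:empchi}) then reduces the proposition to the purely $k$-Dyck identity
\begin{align*}
\mathrm{wt}^{\ast}(\nu)=\bigl(q(m'u'+u'+1)\bigr)^{n-\mathrm{Peak}(\nu)}\bigl(m'+q(m'u'+u'+1)\bigr)^{\mathrm{Peak}(\nu)},
\end{align*}
which I prove by induction on $n$. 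The base case $\nu=(UD^{k})^{n}$ matches the zig-zag clause of the definition of $\mathrm{wt}^{\ast}$, and multiplicativity in the prime decomposition $\nu=\mu_{1}\circ\mu_{2}$ reduces the inductive step to the case of prime $\mu_{1}=U\lambda_{1}D\lambda_{2}D\cdots\lambda_{k}D$. There, the subcase $\lambda_{1}=\emptyset$ produces one extra peak (the initial $UD$) and contributes the factor $m'+q(m'u'+u'+1)$, while $\lambda_{1}\neq\emptyset$ contributes only $q(m'u'+u'+1)$; these match exactly the two branches of the recursive definition of $\mathrm{wt}^{\ast}$.

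The main obstacle is the combinatorial accounting of peaks under the prime decomposition: one must check that the peaks of $\mu_{1}$ consist only of (i) the initial $UD$ when $\lambda_{1}=\emptyset$, and (ii) the peaks internal to each $\lambda_{i}$, without any extraneous peaks arising at the boundaries $\lambda_{i}D$ or $D\lambda_{i+1}$. Because any nonempty $k$-Dyck path begins with $U$ and ends with $D$, no boundary peak is created; once this is verified, matching the powers of $q$ coming from the factors $q^{(k+1-i)|\lambda_{i}|}$ in the definition of $\mathrm{wt}^{\ast}$ against those on the right-hand side is a routine bookkeeping. A parallel alternative, mirroring the proof of Proposition \ref{prop:chiinDyck}, is to skip the Dyck-path reduction altogether and instead verify directly that the right-hand side of (\ref{eq:chiemptySchB}) satisfies the functional equation (\ref{eq:chiempty}) together with the correct constant term $1/(m-1)$ at $p=0$.
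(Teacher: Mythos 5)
Your overall route --- fibering $\mathtt{Sch}^{B}(n;k)$ over $\mathtt{Dyck}(n;k)$ via $\mu\mapsto\widetilde{\mu}$ and collapsing each fiber with the binomial theorem --- is the same as the paper's argument, and your description of the fibers and the resulting identity
\begin{align*}
\sum_{\mu\in\sigma^{-1}(\nu)}m'^{N(H)}(m'u'+u'+1)^{N(U)}q^{d(\mu)}
=q^{\mathrm{Area}(\nu)}\bigl(q(m'u'+u'+1)\bigr)^{n-\mathrm{Peak}(\nu)}\bigl(m'+q(m'u'+u'+1)\bigr)^{\mathrm{Peak}(\nu)}
\end{align*}
are correct. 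The gap is in the lemma you reduce to: the identity $\mathrm{wt}^{\ast}(\nu)=\bigl(q(m'u'+u'+1)\bigr)^{n-\mathrm{Peak}(\nu)}\bigl(m'+q(m'u'+u'+1)\bigr)^{\mathrm{Peak}(\nu)}$ is false. Already for $k=1$ and $\nu=UUDD$ the recursive definition gives $\mathrm{wt}^{\ast}(UUDD)=q^{2}(m'u'+u'+1)\bigl(m'+q(m'u'+u'+1)\bigr)$, while your right-hand side is $q(m'u'+u'+1)\bigl(m'+q(m'u'+u'+1)\bigr)$; the discrepancy is exactly $q^{\mathrm{Area}(\nu)}$. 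In general the factors $q^{(k+1-i)|\lambda_{i}|}$ in Eq.~(\ref{eq:wtchiDyck}) contribute an extra $q^{\mathrm{Area}(\nu)}$ beyond the $q^{n-\mathrm{Peak}(\nu)}$ your formula predicts, so the top $q$-degree of $\mathrm{wt}^{\ast}(\nu)$ is $n+\mathrm{Area}(\nu)$, which exceeds $n$ whenever $\nu$ has positive area. The ``routine bookkeeping'' you defer is precisely where the induction breaks; the statement that is actually provable by your induction is
\begin{align*}
\mathrm{wt}^{\ast}(\nu)=q^{\mathrm{Area}(\nu)}\bigl(q(m'u'+u'+1)\bigr)^{n-\mathrm{Peak}(\nu)}\bigl(m'+q(m'u'+u'+1)\bigr)^{\mathrm{Peak}(\nu)}.
\end{align*}

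To be fair, your reduction is arithmetically faithful, and what it really exposes is that the two source statements are mutually inconsistent: combining the corrected lemma above with your fiber computation gives $\sum_{\nu}\mathrm{wt}^{\ast}(\nu)=\sum_{\lambda}m'^{N(H)}(m'u'+u'+1)^{N(U)}q^{d(\lambda)}$ with \emph{no} factor $q^{\mathrm{Area}}$ on the Dyck side, whereas Eq.~(\ref{eq:empchi}) in Proposition~\ref{prop:chiinDyck} carries that factor. Hence (\ref{eq:chiemptySchB}) with $d$ as in (\ref{eq:SchArea}) and (\ref{eq:empchi}) as printed cannot both hold; one must either delete $q^{\mathrm{Area}(\mu)}$ from (\ref{eq:empchi}) or take $d(\mu)=2\,\mathrm{Area}(\widetilde{\mu})+n-N(H)$. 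That is a defect of the statements you were given rather than of your strategy, but as written your proof asserts and claims to establish a false intermediate identity, so it does not go through. Your fallback --- verifying the functional equation (\ref{eq:chiempty}) directly for the right-hand side of (\ref{eq:chiemptySchB}) --- would force the area bookkeeping into the open and is the safer route, but you have not carried it out.
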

\begin{proof}
From Proposition \ref{prop:chiinDyck}, it is enough to show 
that 
\begin{align}
\label{eq:chiDyckSch}
\sum_{\mu\in\mathtt{Dyck}(n;k)}\mathrm{wt}^{\ast}(\mu)q^{\mathrm{Area}(\mu)}
=\sum_{\lambda\in\mathtt{Sch}^{B}(n;k)}m'^{N(H)}(m'u'+u'+1)^{N(U)}q^{d(\lambda)}.
\end{align}
Since the factors in Eq. (\ref{eq:wtchiDyck}) consist of $q$, $(m'+u'+1)$, and 
$(m'+q(m'u'+u'+1))$, the expansion of the left hand side of Eq. (\ref{eq:chiDyckSch})
in terms of the factors $q$, $m'$ and $(m'+u'+1)$ is unique.
We first observe a correspondence between a $k$-Dyck path and a set of $k$-Schr\"oder 
paths of type B.
Given a $k$-Dyck path $\mu$, we have a set of paths $\lambda\in\mathtt{Sch}^{B}(n;k)$ 
obtained from $\mu$ by replacing $UD$ with $H$.
When the path $\mu$ has $N_{UD}$ $UD$ patterns, the cardinality of the set is 
$2^{N_{UD}}$.
From the definition of the weight (\ref{eq:wtchiDyck}) given to $\mu$, 
we give the weight $m'$ to a horizontal step in $\lambda$ and the weight 
$m'u'+u'+1$ to an	 up step in $\lambda$.

We compare the exponent of $q$ in the right hand side of Eq. (\ref{eq:chiDyckSch}) with 
that in the left hand side of Eq. (\ref{eq:chiDyckSch}).
Note that the definition of the weight (\ref{eq:wtchiDyck}) reflects the 
existence of the pattern $UD$ in $\mu$. 
This implies that the exponent of $q$ in $\mathrm{wt}^{\ast}(\mu)$ is equal to 
the sum of the number $n-N(H)$ and the area $\mathrm{Area}(\mu)$ in $\lambda$. 
Therefore, $\lambda\in\mathtt{Sch}^{B}(n;k)$,  
we have  
\begin{align}
\label{eq:SchArea1}
d(\lambda)=\mathrm{Area}(\widetilde{\lambda})+n-N(H),
\end{align}
where $N(H)$ is the number of horizontal steps in $\lambda$ and $\widetilde{\lambda}$ is the $k$-Dyck path 
which is obtained from $\lambda$ by replacing $H$ by $UD$.

From these observations, we have 
the correspondence (\ref{eq:chiDyckSch}), which completes 
the proof.
\end{proof}

\begin{prop}
\label{prop:SchB}
The number of $k$-Schr\"oder paths of type $B$ of size $n$ is 
given by
\begin{align}
\label{eq:SchBnk}
|\mathtt{Sch}^{B}(n;k)|
=\genfrac{}{}{}{}{1}{n}
\sum_{m=0}^{n-1}2^{m+1}\genfrac{(}{)}{0pt}{}{nk}{m}\genfrac{(}{)}{0pt}{}{n}{n-m-1}.
\end{align}
Similarly, we also have another expression similar to Eq. (\ref{eq:Sch}):
\begin{align}
\label{eq:SchBnk2}
|\mathtt{Sch}^{B}(n;k)|
=\genfrac{}{}{}{}{1}{n}\sum_{j=0}^{n}\genfrac{(}{)}{0pt}{}{n}{j}\genfrac{(}{)}{0pt}{}{kn+j}{n-1}.
\end{align}
\end{prop}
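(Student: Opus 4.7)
The plan is to read off a closed-form functional equation for the ordinary generating function of $|\mathtt{Sch}^{B}(n;k)|$ from the expression (\ref{eq:chiemptySchB}) by specializing the weights, and then apply the Lagrange inversion theorem (Theorem \ref{thrm:Lag}) in its first form.

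First I would specialize the parameters in (\ref{eq:chiemptySchB}) so that the weighted sum becomes a plain count. Setting $q=1$ kills the statistic $d(\mu)$, and choosing $m'=1$ together with $m'u'+u'+1=1$ (for instance $m=2$, $u=1$, so that $u'=0$) makes every path contribute weight $1$ and renders the prefactor $(m-1)^{-(k+1)n-1}$ equal to $1$. Therefore
\[
\chi_{k}^{\ast}(p,1)\big|_{m=2,\,u=1}=\sum_{n\ge 0}(-p)^{n}|\mathtt{Sch}^{B}(n;k)|.
\]
Next I would substitute the same specialization into the functional equation (\ref{eq:chiempty}); with $q=1$ the two products collapse to $\chi^{k}$ and $\chi^{k+1}$, and the equation reduces to $1-\chi-p\chi^{k}-p\chi^{k+1}=0$. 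Replacing $p$ by $-p$ and letting $f(p):=\widetilde{\chi}(p)=\chi(-p)=\sum_{n\ge 0}|\mathtt{Sch}^{B}(n;k)|\,p^{n}$, this becomes
\[
f(p)=1+p\,f(p)^{k}\bigl(1+f(p)\bigr).
\]

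Now I would apply Theorem \ref{thrm:Lag} with $H(x)=x^{k}(1+x)$. Since $H(1+y)=(1+y)^{k}(2+y)$, the theorem yields
\[
|\mathtt{Sch}^{B}(n;k)|=[p^{n}]f(p)=\frac{1}{n}[y^{n-1}]\bigl((1+y)^{k}(2+y)\bigr)^{n}=\frac{1}{n}[y^{n-1}](1+y)^{kn}(2+y)^{n}.
\]
The two claimed identities then correspond to two natural ways of expanding this coefficient. For (\ref{eq:SchBnk}) I would expand $(2+y)^{n}=\sum_{j}\binom{n}{j}2^{n-j}y^{j}$ and $(1+y)^{kn}=\sum_{i}\binom{kn}{i}y^{i}$, collect terms with $i+j=n-1$, and reindex with $m=i$. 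For (\ref{eq:SchBnk2}) I would instead write $(2+y)^{n}=((1+y)+1)^{n}=\sum_{j}\binom{n}{j}(1+y)^{j}$, giving $(1+y)^{kn+j}$ inside, whose coefficient of $y^{n-1}$ is $\binom{kn+j}{n-1}$.

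The bulk of the work is conceptual rather than technical: the only real step that requires care is verifying that the specialization $m=2$, $u=1$, $q=1$ genuinely turns the weighted identity (\ref{eq:chiemptySchB}) into the unweighted generating function while simultaneously simplifying the functional equation (\ref{eq:chiempty}) into one directly amenable to Lagrange inversion. After that, the two binomial formulae fall out by routine coefficient extraction from $(1+y)^{kn}(2+y)^{n}$, so I do not expect any serious obstacle.
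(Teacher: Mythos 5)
Your proposal is correct and follows essentially the same route as the paper: the same specialization $(m',u')=(1,0)$, $q=1$, $p\mapsto -p$ turning (\ref{eq:chiemptySchB}) into the plain generating function and (\ref{eq:chiempty}) into $\chi=1+p(\chi^{k}+\chi^{k+1})$, followed by Lagrange inversion on $H(x)=x^{k}(1+x)$ and the two expansions of $(2+y)^{n}$. No discrepancies to report.
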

\begin{proof}
Set $(m',u')=(1,0)$ and $(p,q)=(-p,1)$ in Eq. (\ref{eq:chiempty}).
Then, $\chi:=\chi^{\ast}_{k}(p)$ satisfies 
\begin{align}
\label{eq:chiSchB}
\chi=1+p(\chi^{k}+\chi^{k+1}).
\end{align}
On the other hand, from the expression (\ref{eq:chiemptySchB}), 
we have 
\begin{align*}
[p^{n}]\chi=|\mathtt{Sch}^{B}(n;k)|.
\end{align*}
By applying the Lagrange inversion theorem to Eq. (\ref{eq:chiSchB}), 
we obtain
\begin{align*}
[p^{n}]\chi&=\genfrac{}{}{}{}{1}{n}[y^{n-1}]\left((1+y)^{k}(2+y)\right)^{n}, \\
&=
\genfrac{}{}{}{}{1}{n}\sum_{m=0}^{n-1}2^{m+1}\genfrac{(}{)}{0pt}{}{nk}{m}\genfrac{(}{)}{0pt}{}{n}{n-m-1},
\end{align*}
which completes the proof.

Equation (\ref{eq:SchBnk2}) is similarly shown by using the Lagrange inversion 
theorem. We use the relation 
\begin{align*}
(2+y)^{n}=\sum_{j=0}^{n}\genfrac{(}{)}{0pt}{}{n}{j}(1+y)^{j},
\end{align*}
and rearrange the terms.
\end{proof}

The integer sequences $|\mathtt{Sch}^{B}(n;k)|$ appear in OEIS \cite{Slo}
as A006318, A027307, A144097 and A260332 for $k=1,2,3$ and $4$ respectively.

\begin{cor}
Let $0\le m\le m-1$.
The number of $k$-Dyck paths of size $n$ with $m+1$ peaks is given by 
\begin{align}
\label{eq:kDyckmpeaks}
\genfrac{}{}{}{}{1}{n}\genfrac{(}{)}{0pt}{}{nk}{m}\genfrac{(}{)}{0pt}{}{n}{n-m-1}
\end{align}
\end{cor}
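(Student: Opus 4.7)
The plan is to refine the Lagrange inversion argument of Proposition \ref{prop:SchB} by tracking the number of peaks with an auxiliary variable $t$. Define the refined generating function
\begin{align*}
D(p,t) := \sum_{n \geq 0} p^{n} \sum_{\mu \in \mathtt{Dyck}(n;k)} t^{\mathrm{Peak}(\mu)}.
\end{align*}
The number we wish to enumerate is precisely $[p^{n} t^{m+1}] D(p,t)$, so it suffices to compute $D(p,t)$ in closed form.

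First I would derive the functional equation
\begin{align*}
D(p,t) = 1 + p \, D(p,t)^{k}\bigl(D(p,t) + t - 1\bigr)
\end{align*}
using the prime decomposition of $k$-Dyck paths recalled after Definition 2.5. Every non-empty $\mu$ factors uniquely as $\mu^{\mathrm{p}} \circ \mu'$ with $\mu^{\mathrm{p}}$ prime, and a prime path has the shape $U \lambda_{1} D \lambda_{2} D \cdots \lambda_{k} D$. The crucial observation is that a peak at the initial $U$ occurs exactly when $\lambda_{1} = \emptyset$, while no additional peaks appear at the internal boundaries because each $\lambda_{i}$ for $i \geq 2$ is preceded by a $D$ and, if non-empty, terminates in a $D$. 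Hence the weighted contribution of $\lambda_{1}$ is $t + (D - 1) = D + t - 1$ (the extra $t$ accounting for the initial-peak case), each of $\lambda_{2}, \ldots, \lambda_{k}$ contributes a factor $D$, and the suffix $\mu'$ contributes one more factor $D$. At $t = 1$ this reduces to the standard recurrence $D = 1 + p D^{k+1}$, which serves as a consistency check.

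Next, applying the first form of Theorem \ref{thrm:Lag} with $H(x) := x^{k}(x + t - 1)$ (regarded as a polynomial in $x$ over $\mathbb{Q}[t]$) yields
\begin{align*}
[p^{n}] D(p,t) = \frac{1}{n}\, [y^{n-1}] H(1+y)^{n} = \frac{1}{n}\, [y^{n-1}] (1+y)^{nk} (y+t)^{n}.
\end{align*}
Expanding $(y + t)^{n} = \sum_{j=0}^{n}\binom{n}{j} y^{n-j} t^{j}$ and reading off the coefficient of $t^{m+1}$ selects $j = m+1$ and leaves $\binom{n}{m+1}\,[y^{m}](1+y)^{nk} = \binom{n}{m+1}\binom{nk}{m}$. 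Invoking $\binom{n}{m+1} = \binom{n}{n-m-1}$ and dividing by $n$ produces precisely the expression in \eqref{eq:kDyckmpeaks}.

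The main obstacle lies in the first step: one must verify carefully that the peak marker $t$ appears only at the leading $UD$ of the prime factor (when $\lambda_{1} = \emptyset$) and is not double-counted or missed at any internal boundary of the decomposition. Once the refined functional equation is established, the remaining steps are a mechanical application of Lagrange inversion, formally identical to the computation appearing in the proof of Proposition \ref{prop:SchB}.
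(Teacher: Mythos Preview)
Your argument is correct. The functional equation $D = 1 + p\,D^{k}(D+t-1)$ is right: the only place a new peak can arise in the decomposition $U\lambda_1 D\lambda_2 D\cdots\lambda_k D\,\mu'$ is at the leading $U$ when $\lambda_1=\emptyset$, since every non-empty $k$-Dyck subpath begins with $U$ and ends with $D$, so all internal boundaries are $UU$ or $DD$ or $DU$. Your Lagrange inversion and coefficient extraction are then routine and correct.

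The paper's proof, however, is different and shorter: it does not introduce a peak-marking variable at all. Instead it invokes the bijection between $k$-Schr\"oder paths of type~$B$ and $k$-Dyck paths with a subset of their peaks replaced by $H$, so that a $k$-Dyck path with $m+1$ peaks gives rise to exactly $2^{m+1}$ Schr\"oder paths. It then reads the result off term-by-term from the already-established formula \eqref{eq:SchBnk}, $|\mathtt{Sch}^{B}(n;k)|=\frac{1}{n}\sum_{m}2^{m+1}\binom{nk}{m}\binom{n}{n-m-1}$. Your approach is more self-contained and makes the term-by-term identification rigorous via the formal variable $t$; the paper's approach is quicker because it recycles Proposition~\ref{prop:SchB}, but its ``compare term by term'' step tacitly relies on exactly the polynomial-in-$t$ refinement that you carry out explicitly. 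In effect your proof is what one would write to justify the paper's shortcut.
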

\begin{proof}
By the definition of $k$-Schr\"oder paths of type $B$, one can associate 
$2^{m+1}$ $k$-Schr\"oder paths of type $B$ to a $k$-Dyck path with $m+1$ peaks of size $n$
by replacing the pattern $UD$ by $H$.
By comparing the above property with Eq. (\ref{eq:SchBnk}), 
we obtain the desired expression.
\end{proof}

The integer sequences given by Eq. (\ref{eq:kDyckmpeaks}) appear in OEIS \cite{Slo} as 
A001263, A108767, A173020, and A173621 for $k=1,2,3$ and $4$.
The sequence for $k=1$ is Narayana numbers and the one for $k\ge2$
is generalized Runyon numbers studied in \cite{Cig87,Son05}.

\subsection{Generalized recurrence relation with distance two}
\label{sec:Gdist2}
The recurrence relation for the dimer model with distance more than two sites 
is transformed to the following recurrence relation.
\begin{defn}
Let $\zeta(m,r,s)$ be a formal power series
\begin{align}
\label{eq:zetam}
\begin{split}
1&=(1+rs(m-2))\zeta(r)+rs\zeta(r)\zeta(rs)+ rs(1+rs^2(m-1))\zeta(r)\zeta(rs)\zeta(rs^2) \\
&\quad+r^2s^3\zeta(r)\zeta(rs)\zeta(rs^2)\zeta(rs^3).
\end{split}
\end{align}
\end{defn}

Below, we consider a special case $m=1$. 
To describe the formal power series for $m\ge2$ in terms of generalized lattice 
paths remains an open problem (see Section \ref{sec:OP}).

Given a $2$-Dyck path $\mu$ in $\mathtt{Dyck}(n;2)$, we define a statistics 
$\mathrm{Area}(\mu)$ as the number of unit boxes above the lowest path $\mu_{0}$ 
and below the path $\mu$.

The formal power series $\zeta(m=1,r,s)$ can be expressed in terms of $k$-Dyck paths.
\begin{prop}
The formal power series $\zeta(m=1,r,s)$ satisfies
\begin{align}
\label{eq:zetam1}
\zeta(m=1,r,s)=\sum_{0\le n}(-rs)^{n}
\sum_{\mu\in\mathtt{Dyck}(n;2)}
s^{\mathrm{Area}(\mu)}.
\end{align}
\end{prop}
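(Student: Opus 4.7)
The plan is to verify that the right-hand side of (\ref{eq:zetam1}), which I write as $\widetilde{\zeta}(r):=\sum_{n\ge 0}(-rs)^{n}A_n(s)$ with $A_n(s):=\sum_{\mu\in\mathtt{Dyck}(n;2)}s^{\mathrm{Area}(\mu)}$, satisfies the defining equation (\ref{eq:zetam}) at $m=1$. Because the constant term of $\zeta(m{=}1,r,s)$ in $r$ is forced to be $1$ and (\ref{eq:zetam}) then determines every subsequent $r$-coefficient inductively, matching functional equations is enough to force $\widetilde{\zeta}=\zeta(m{=}1,r,s)$. Rather than attack (\ref{eq:zetam}) head on, I will first derive a cleaner three-term relation
\begin{align*}
\widetilde{\zeta}(r)=1-rs\,\widetilde{\zeta}(r)\widetilde{\zeta}(rs)\widetilde{\zeta}(rs^{2})
\end{align*}
for $\widetilde{\zeta}$, and then show that this algebraically implies (\ref{eq:zetam}) with $m=1$.

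The main work is in the area bookkeeping. Every non-empty $2$-Dyck path admits the unique decomposition $\mu=U\circ\mu_1\circ D\circ\mu_2\circ D\circ\mu_3$ from (\ref{eq:mudecomp}) at $k=2$; writing $n_i:=|\mu_i|$, I plan to use the identity that the area below a lattice path equals the sum of the heights of its $D$-steps in order to express the total area under $\mu$ (from $y=0$) in terms of the analogous quantities for the three sub-paths. The translations place $\mu_i$ at starting heights $1$, $n_1+1$, and $n_1+n_2+1$, contributing shift terms $2n_1$, $2n_2(n_1+1)$, $2n_3(n_1+n_2+1)$, together with the two intermediate $D$-steps at heights $n_1+1$ and $n_1+n_2+1$. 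Subtracting the baseline $n(n+1)$ below $\mu_0=(UDD)^n$, where $n=1+n_1+n_2+n_3$, makes every quadratic cross-term $n_in_j$ cancel, and the residue is the clean identity
\begin{align*}
\mathrm{Area}(\mu)=\mathrm{Area}(\mu_1)+\mathrm{Area}(\mu_2)+\mathrm{Area}(\mu_3)+2n_1+n_2.
\end{align*}
The asymmetric coefficients $2$ and $1$ on $n_1$ and $n_2$ (and the absence of $n_3$) reflect the three different heights at which the sub-paths sit, and arranging this cancellation cleanly is where I expect the main obstacle to be. The resulting recursion $A_n(s)=\sum_{n_1+n_2+n_3=n-1}s^{2n_1+n_2}A_{n_1}(s)A_{n_2}(s)A_{n_3}(s)$ translates, via the generating function $F(x):=\sum_{n\ge 0}A_n(s)x^{n}$, into $F(x)=1+xF(x)F(sx)F(s^{2}x)$, and the substitution $x=-rs$ together with $F(-rs^{j+1})=\widetilde{\zeta}(rs^{j})$ produces the three-term relation displayed above.

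Finally, I would apply the three-term relation in the form $rs\,\widetilde{\zeta}(r)\widetilde{\zeta}(rs)\widetilde{\zeta}(rs^{2})=1-\widetilde{\zeta}(r)$ together with its shift $rs^{2}\widetilde{\zeta}(rs)\widetilde{\zeta}(rs^{2})\widetilde{\zeta}(rs^{3})=1-\widetilde{\zeta}(rs)$ to rewrite the right-hand side of (\ref{eq:zetam}) with $m=1$: the third summand collapses to $1-\widetilde{\zeta}(r)$ and the fourth becomes $rs\,\widetilde{\zeta}(r)(1-\widetilde{\zeta}(rs))$. Adding these to $(1-rs)\widetilde{\zeta}(r)+rs\,\widetilde{\zeta}(r)\widetilde{\zeta}(rs)$, the whole expression telescopes to $1$, which completes the verification that $\widetilde{\zeta}=\zeta(m{=}1,r,s)$.
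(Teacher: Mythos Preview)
Your proposal is correct and follows essentially the same route as the paper: both reduce the four-term relation (\ref{eq:zetam}) at $m=1$ to the cubic relation $1=\widetilde{\zeta}(r)+rs\,\widetilde{\zeta}(r)\widetilde{\zeta}(rs)\widetilde{\zeta}(rs^{2})$, and then verify that the area generating function satisfies this cubic relation via the decomposition $\mu=U\mu_1 D\mu_2 D\mu_3$ (equivalently, the ternary-tree bijection). The paper merely asserts the implication ``cubic $\Rightarrow$ quartic'' and sketches the area shift as $r\mapsto rs^{j}$, whereas you carry out both the area identity $\mathrm{Area}(\mu)=\sum_i\mathrm{Area}(\mu_i)+2n_1+n_2$ and the telescoping check of (\ref{eq:zetam}) explicitly; these added details are welcome but do not change the strategy.
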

\begin{proof}
Suppose that a formal power series $\zeta'(r,s)$ satisfies 
\begin{align}
\label{eq:zetadash}
1=\zeta'(r,s)+rs\zeta'(r,s)\zeta'(rs,s)\zeta'(rs^2,s).
\end{align}
Then, $\zeta'(r,s)$ satisfies also the recurrence equation (\ref{eq:zetam}) 
with $m=1$.
Therefore, it is enough to show that the formal power series defined by Eq. (\ref{eq:zetam1}) 
satisfies the recurrence equation (\ref{eq:zetadash}).

Suppose that $\zeta'(r,s)$ is the formal power series defined by Eq. (\ref{eq:zetam1}). 
Recall that we have a canonical bijection between a $2$-Dyck path 
and a ternary tree.
A node in a ternary tree has three children. 
Each children has also a ternary tree below it.

To be compatible with the statistics $\mathrm{Area}(\mu)$, 
we have to shift $r$ to $rs^{j}$ with $j\in\{0,1,2\}$ 
corresponding to the position of the child edges.
Thus, it is straightforward to observe that $\zeta'(r,s)$ satisfies
the recurrence equation (\ref{eq:zetadash}).
This completes the proof.
\end{proof}

\subsection{Type I}
\label{sec:glpI}
We study a formal power series associated to the generating function of the dimer model
of type I.
The formal power series can be expressed in terms of $k$-Dyck paths, $k$-Schr\"oder 
paths of type $A$.
We also show that, under a certain specialization of formal variables, the formal 
power series can be expressed in terms of $k$-Motzkin paths.

\subsubsection{\texorpdfstring{$k$}{k}-Dyck paths}

We generalize the recurrence relation in such a way that 
it reflects combinatorial structures.
We replace the role of Dyck paths by that of $k$-Dyck paths.
From Definition \ref{defn:rrtypeI}, we consider the 
following difference equation for a formal power series 
which we call type I.
\begin{defn}
Let $\nu(m,r,s,u)$ be a formal power series satisfying 
\begin{align}
\label{eq:rrnu}
f(rs)\nu(m,r,s,u)=1-g(rs)\prod_{j=0}^{k}\nu(m,rs^{j},s,u),
\end{align}
where $f(x)$ and $g(x)$ are defined in Definition \ref{defn:fg}.
\end{defn}
Note that if we set $k=1$, Eq. (\ref{eq:rrnu}) coincides with 
the recurrence relation (\ref{eq:defchi}).	

Let $\mu$ be a $k$-Dyck path of size $n$.
Let $\mathrm{Peak}(k;\mu)$ be the number of pattern $UD^{k}$ in $\mu$.
If we set $k=1$, then $\mathrm{Peak}(1;\mu)$ is equal to $\mathrm{Peak}(\mu)$.

\begin{defn}
\label{defn:wtI}
We define the weight $\mathrm{wt}^{I}(k;\mu)$ for a $k$-Dyck path $\mu$ as 
\begin{align*}
\mathrm{wt}^{I}(k;\mu):=(m'+1)^{\mathrm{Peak}(k;\mu)}(u'+1)^{\mathrm{Peak}(k;\mu)}
(m'u'+u'+1)^{n-\mathrm{Peak}(k;\mu)}.
\end{align*}
\end{defn}

\begin{prop}
The formal power series $\nu(m,r,s,u)$ is given by
\begin{align*}
\nu(m,r,s,u)=\sum_{0\le n}(-rs)^{n}\sum_{\mu\in\mathtt{Dyck}(n;k)}s^{|Y(\mu_{0}/\mu)|}\mathrm{wt}^{I}(k;\mu).
\end{align*}
\end{prop}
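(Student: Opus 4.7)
The plan is to verify that the candidate series
\[
\widetilde\nu(r,s):=\sum_{n\ge0}(-rs)^{n}\nu_{n}(s),\qquad \nu_{n}(s):=\sum_{\mu\in\mathtt{Dyck}(n;k)}s^{|Y(\mu_{0}/\mu)|}\mathrm{wt}^{I}(k;\mu),
\]
satisfies the same functional equation (\ref{eq:rrnu}) as $\nu(m,r,s,u)$ with the same initial value $1$ at $r=0$; since the equation together with this normalization determines the formal power series uniquely, this forces $\widetilde\nu=\nu$. Substituting the ansatz into (\ref{eq:rrnu}), using $f(rs)=1+m'rs$ and $g(rs)=(m'u'+u'+1)rs$, and comparing the coefficient of $(-rs)^{n}$ on both sides yields the equivalent coefficient-wise recurrence
\[
\nu_{n}(s)=m'\,\nu_{n-1}(s)+(m'u'+u'+1)\sum_{n_{0}+\cdots+n_{k}=n-1}s^{\sum_{j=1}^{k}j\,n_{j}}\prod_{j=0}^{k}\nu_{n_{j}}(s),
\]
together with $\nu_{0}(s)=1$. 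The task thus reduces to proving this combinatorial recurrence.

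The key tool is the canonical prime decomposition (\ref{eq:mudecomp}): every nonempty $\mu\in\mathtt{Dyck}(n;k)$ factors uniquely as $\mu=U\circ\mu_{1}\circ D\circ\mu_{2}\circ D\circ\cdots\circ\mu_{k}\circ D\circ\mu_{k+1}$ with $\mu_{i}\in\mathtt{Dyck}(n_{i};k)$ and $\sum_{i=1}^{k+1}n_{i}=n-1$. For the peak count, the leading outer $U$ starts a pattern $UD^{k}$ exactly when $\mu_{1}=\cdots=\mu_{k}=\emptyset$, and no peak can straddle the boundary between some $\mu_{i}$ and the outer $D$ that follows it, because for $k\ge 2$ any nonempty $k$-Dyck path must end in at least $k$ consecutive $D$'s (a consequence of $y\ge x/k$ applied just before the last $U$-step). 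Hence $\mathrm{Peak}(k;\mu)=\sum_{i=1}^{k+1}\mathrm{Peak}(k;\mu_{i})+\delta$, with $\delta=1$ if $\mu_{1}=\cdots=\mu_{k}=\emptyset$ and $\delta=0$ otherwise. Setting $A:=(m'+1)(u'+1)=B+m'$ with $B:=m'u'+u'+1$, this refines into the multiplicative weight identity $\mathrm{wt}^{I}(k;\mu)=B\prod_{i=1}^{k+1}\mathrm{wt}^{I}(k;\mu_{i})+m'\,\mathrm{wt}^{I}(k;\mu_{k+1})\cdot\delta$.

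For the area, block $\mu_{i}$ is embedded starting at $\bigl(k(n_{1}+\cdots+n_{i-1})+(i-1),\,n_{1}+\cdots+n_{i-1}+1\bigr)$, and a direct calculation of areas of staircases on the horizontal strip of width $kn_{i}$ shows that $\mu_{i}$ contributes $|Y(\mu_{0}^{(n_{i})}/\mu_{i})|+(k+1-i)n_{i}$ to $|Y(\mu_{0}/\mu)|$, while the outer $U$-step and outer $D$-steps contribute $0$. Summing gives
\[
|Y(\mu_{0}/\mu)|=\sum_{i=1}^{k+1}|Y(\mu_{0}^{(n_{i})}/\mu_{i})|+\sum_{i=1}^{k}(k+1-i)n_{i}.
\]
Substituting the weight and area decompositions into the definition of $\nu_{n}(s)$, isolating the $m'\,\mathrm{wt}^{I}(k;\mu_{k+1})\cdot\delta$ contribution (which forces $n_{1}=\cdots=n_{k}=0$ and collapses to $m'\nu_{n-1}(s)$), and reindexing via $j=k+1-i$ (which turns $\sum_{i=1}^{k}(k+1-i)n_{i}$ into $\sum_{j=1}^{k}j\,n_{j}$) produces the target recurrence, completing the proof. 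The main technical obstacle is the area identity: the fact that the contribution of block $i$ depends only on $n_{i}$ with coefficient $k+1-i$ (and not on the earlier $n_{1},\dots,n_{i-1}$) arises from a delicate cancellation between the height at which $\mu_{i}$ sits and the staircase profile of $\mu_{0}=(UD^{k})^{n}$ on the relevant strip, which I would verify by an explicit two-step computation of $\int(\mu-\mu_{0})$ within each strip.
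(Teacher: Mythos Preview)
Your proof is correct and follows essentially the same route as the paper: derive the coefficient-wise recurrence for $\nu_{n}(s)$ from the functional equation (\ref{eq:rrnu}), then verify it combinatorially via the $(k+1)$-ary decomposition (\ref{eq:mudecomp}) of a $k$-Dyck path, tracking how area and peak count split. Your treatment is in fact more explicit than the paper's, which asserts the area increment $(p-1)j_{p}$ and then says the recurrence ``can be verified by a simple calculation''; in particular your peak-decomposition lemma (that the outer $U$ contributes a $UD^{k}$ iff $\mu_{1}=\cdots=\mu_{k}=\emptyset$, and no peak can straddle because every nonempty $k$-Dyck path ends in at least $k$ down steps) is exactly the ingredient the paper leaves implicit.
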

\begin{proof}
Let 
\begin{align*}
\nu_{n}(s):=\sum_{\mu\in\mathtt{Dyck}(n;k)}s^{|Y(\mu_{0}/\mu)|}\mathrm{wt}^{I}(k;\mu).
\end{align*}
Let $J(a;S)$, $a\le k+1$, be the set 
\begin{align*}
J(a;S):=\left\{
(j_a,j_{a+1},\ldots,j_{k+1})\Big|
\sum_{p=a}^{k+1}j_{p}=S
\right\}.
\end{align*}
We denote $\mathbf{j}:=(j_a,j_{a+1},\ldots,j_{k+1})$. 
From Eq. (\ref{eq:rrnu}), $\nu_{n}(s)$ should satisfy
\begin{align}
\label{eq:rrnu2}
\begin{split}
\nu_{n}(s)&=m'\nu_{n-1}(s)+(m'u'+u'+1)\sum_{\mathbf{j}\in J(1;n-1)}\prod_{p=1}^{k+1}\nu_{j_p}(s)s^{(p-1)j_{p}}, \\
&=(m'+1)(u'+1)\nu_{n-1}(s)+(m'u'+u'+1)\sum_{j_1=0}^{n-2}\nu_{j_1}(s)\sum_{\mathbf{j}\in J(2;n+1-j_{1})}\prod_{p=2}^{k+1}\nu_{j_p}(s)s^{(p-1)j_{p}}, \\
&=\nu_{1}(s)\nu_{n-1}(s)+(m'u'+u'+1)\sum_{j_1=0}^{n-2}\nu_{j_1}(s)\sum_{\mathbf{j}\in J(2;n+1-j_{1})}\prod_{p=2}^{k+1}\nu_{j_p}(s)s^{(p-1)j_{p}}.
\end{split}
\end{align}
Any $k$-Dyck path can be written as a concatenation of $k$-Dyck path, one of which is prime.
A prime $k$-Dyck path can be constructed from the prime path $UD^{k}$ of size $1$.
We insert $k$ $k$-Dyck paths between the first $U$ and the last $D$.
If we insert $k$-Dyck path of size $j_{p}$ at $p$-th position from right in $UD^{k}$, 
the area below the prime path is increased by $(p-1)j_{p}$.
Therefore, $\nu_{n}(s)$ satisfies Eq. (\ref{eq:rrnu2}), which completes the proof.
\end{proof}

We expand $\nu(m,r,s=1,u)$ as 
\begin{align*}
\nu(m,r,s=1,u)=:\sum_{0\le n}(-r)^{n}S_{k}(n;m',u').
\end{align*}
As we will see below, the number $S_{k}(n;m',u')$ with a specialization of $(m',u')$
is equal to the number of generalized lattice path with a certain weight.
Especially, we are interested in the specialization $m',u'\in\{0,1\}$.

\begin{prop}
\label{prop:typeIsp}
The number $S_{k}(n;m',u')$ with $m',u'\in\{0,1\}$ enumerates the 
following combinatorial objects.
\begin{enumerate}
\item The number $S_{k}(n;0,0)$ is given by $|\mathtt{Dyck}(n;k)|$, which 
is the Fuss--Catalan number.
\item The number $S_{k}(n;0,1)$ is given by $2^{n}|\mathtt{Dyck}(n;k)|$. 
Namely, the number of $k$-Dyck paths with two colors for each up step.
\item The number $S_{k}(n;1,0)$ is the number of $k$-Schr\"oder paths of type A, 
that is, $|\mathtt{Sch}^{A}(n;k)|$. 
\item The number $S_{k}(n;1,1)$ is the number of weighted $k$-Dyck paths $\mu$ 
such that an up step having the pattern $UD^{k}$ has weight $4$ and other 
up steps have weight $3$.
\end{enumerate}
\end{prop}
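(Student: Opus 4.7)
The plan is to start from the explicit formula
\[
S_k(n; m', u') = \sum_{\mu \in \mathtt{Dyck}(n;k)} (m'+1)^{\mathrm{Peak}(k;\mu)} (u'+1)^{\mathrm{Peak}(k;\mu)} (m'u' + u' + 1)^{n - \mathrm{Peak}(k;\mu)},
\]
obtained from the previous proposition by setting $s = 1$, and then to interpret each of the four specializations combinatorially. Writing $P := \mathrm{Peak}(k; \mu)$ for brevity, the per-path weight becomes $1$ when $(m',u')=(0,0)$, becomes $2^P \cdot 2^{n-P} = 2^n$ when $(m',u')=(0,1)$, and becomes $4^P \cdot 3^{n-P}$ when $(m',u') = (1,1)$. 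Claim (1) is then immediate, claim (2) follows because the constant factor $2^n$ pulls out of the sum, and claim (4) is the observation that $4^P \cdot 3^{n-P}$ is exactly the total weight of $\mu$ when each of the $P$ up steps beginning a $UD^k$ pattern is weighted by $4$ and each of the remaining $n-P$ up steps by $3$.

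The substantive content is claim (3). Setting $(m',u')=(1,0)$ reduces the per-path weight to $2^{P}$, so it suffices to construct a bijection
\[
\Phi : \mathtt{Sch}^A(n; k) \longrightarrow \{(\mu, S) \mid \mu \in \mathtt{Dyck}(n; k),\ S \subseteq \text{occurrences of } UD^k \text{ in } \mu\}.
\]
I would define $\Phi(\lambda)$ by replacing each horizontal step $H=(k,1)$ of $\lambda$ with $UD^k$; the resulting word $\mu$ lies in $\mathtt{Dyck}(n;k)$ since $H$ and $UD^k$ traverse the same displacement and the constraint $y \ge x/k$ is preserved (the intermediate points $(x,y+1)$ and $(x+i,y+1)$ for $1 \le i \le k$ all satisfy it whenever the endpoints do), and the set $S$ is read off as the positions of the $UD^k$ patterns newly created from the $H$ steps. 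The inverse map replaces each peak in $S$ by a single horizontal step $H$.

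The main obstacle, and the only nontrivial point, is well-definedness of these two replacement procedures. For the inverse to make sense, one must verify that distinct occurrences of the pattern $UD^k$ in a $k$-Dyck path occupy pairwise disjoint sets of positions: if a $U$ at position $i$ begins a $UD^k$-peak, then positions $i+1, \dots, i+k$ are forced to all be $D$s, so the next $U$ and hence the next candidate peak can only begin at position $i+k+1$ or later. Consequently, for any subset $S$ of the $UD^k$-occurrences of $\mu$, the replacements commute and produce a well-defined $k$-Schr\"oder path of type A, and the two maps are mutual inverses. Summing then gives $|\mathtt{Sch}^A(n; k)| = \sum_{\mu \in \mathtt{Dyck}(n;k)} 2^{\mathrm{Peak}(k;\mu)} = S_k(n; 1, 0)$, which completes claim (3) and the proposition.
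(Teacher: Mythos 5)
Your proposal is correct and follows essentially the same route as the paper: evaluate $\mathrm{wt}^{I}(k;\mu)$ at each specialization of $(m',u')$, and for part (3) replace a chosen subset of $UD^{k}$ peaks by horizontal steps $(k,1)$ to biject weighted $k$-Dyck paths with $k$-Schr\"oder paths of type A. Your explicit check that distinct $UD^{k}$ occurrences are disjoint (so the replacements are well defined and invertible) is a detail the paper leaves implicit, but the argument is the same.
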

\begin{proof}
(1). By Definition \ref{defn:wtI} of the weight given to a $k$-Dyck path,
it is obvious that $S_{k}(n;0,0)$ enumerates $k$-Dyck paths of size $n$.

(2). We have $\mathrm{wt}^{I}(k;\mu)=2^{n}$ if we set $(m',u')=(0,1)$.
Since $n$ is the number of up steps in $\mu$, $S_{k}(n;0,1)$ is 
equal to $2^{n}|\mathtt{Dyck}(n;k)|$.

(3). We have $\mathrm{wt}^{I}(k;\mu)=2^{\mathrm{Peak}(k;\mu)}$ if we 
set $(m',u')=(1,0)$.
Since a peak is a pattern $UD^{k}$, we may replace this pattern by 
a horizontal step $(k,1)$.
Further, even if we replace it by a horizontal step, the newly obtained 
path never goes below $y=x/k$.
This implies that the new path is a $k$-Schr\"oder path of type A of size $n$.
From these observations, the number $S_{k}(n;1,0)$ is given 
by $|\mathtt{Sch}(n;k)|$.

(4). Since $(m',u')=(1,1)$, the weight is 
$\mathrm{wt}^{I}(k;\mu)=4^{\mathrm{Peak}(k;\mu)}3^{n-\mathrm{Peak}(k;\mu)}$.
Thus the statement in (4) is a direct consequence of this observation.
\end{proof}

The next proposition gives an explicit expression for the number $S_{k}(n;1,1)$.
\begin{prop}
The number $S_{k}(n;1,1)$ is given by 
\begin{align*}
S_{k}(n;1,1)=
\genfrac{}{}{}{}{1}{n}\sum_{j=0}^{n}3^{j}\genfrac{(}{)}{0pt}{}{n}{j}\genfrac{(}{)}{0pt}{}{n+kj}{n-1}.
\end{align*}
\end{prop}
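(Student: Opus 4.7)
The plan is to reduce the claim to a direct application of the first form of the Lagrange inversion theorem (Theorem \ref{thrm:Lag}).

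First, I would set $s=1$ in the defining relation (\ref{eq:rrnu}) and specialize to $(m',u')=(1,1)$, i.e., $m=2$, $u=2$. With these values, Definition \ref{defn:fg} gives $f(r)=1+r$ and $g(r)=3r$, so $\nu(r):=\nu(2,r,1,2)$ satisfies
\begin{align*}
(1+r)\nu(r)=1-3r\,\nu(r)^{k+1}.
\end{align*}
Since the generating function uses alternating signs, I would substitute $r=-p$ and define $\phi(p):=\nu(-p)=\sum_{n\ge0}p^{n}S_{k}(n;1,1)$. Then $\phi$ satisfies
\begin{align*}
(1-p)\phi(p)=1+3p\,\phi(p)^{k+1},
\end{align*}
which rearranges into the canonical Lagrange form
\begin{align*}
\phi(p)=1+p\,H(\phi(p)),\qquad H(x):=x+3x^{k+1}.
\end{align*}

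Next, I would apply the first part of Theorem \ref{thrm:Lag} to extract $[p^{n}]\phi(p)=S_{k}(n;1,1)$:
\begin{align*}
S_{k}(n;1,1)=\frac{1}{n}[y^{n-1}]\bigl((1+y)+3(1+y)^{k+1}\bigr)^{n}.
\end{align*}
The remaining step is a short binomial manipulation: factor out $(1+y)^{n}$ and expand by the binomial theorem,
\begin{align*}
\bigl((1+y)+3(1+y)^{k+1}\bigr)^{n}
=(1+y)^{n}\bigl(1+3(1+y)^{k}\bigr)^{n}
=\sum_{j=0}^{n}3^{j}\binom{n}{j}(1+y)^{n+kj},
\end{align*}
so that $[y^{n-1}]$ picks out $\binom{n+kj}{n-1}$ from each summand, yielding the asserted formula.

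The argument is essentially routine given Theorem \ref{thrm:Lag} and the functional equation already established for $\nu$; there is no substantive obstacle. The only care required is tracking the sign convention introduced by the $(-r)^{n}$ in the expansion of $\nu$ and verifying that the reformulation $\phi=1+pH(\phi)$ is indeed in the hypothesis form required by Lagrange inversion, so that $H(x)$ is a polynomial (which it is, of degree $k+1$).
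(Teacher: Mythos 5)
Your proposal is correct and follows essentially the same route as the paper: specialize the functional equation (\ref{eq:rrnu}) at $s=1$, $(m',u')=(1,1)$ to get the Lagrange form $\phi=1+pH(\phi)$ with $H(x)=x+3x^{k+1}$, apply Theorem \ref{thrm:Lag}, and expand $\bigl((1+y)+3(1+y)^{k+1}\bigr)^{n}$ binomially. The only difference is that you make the sign substitution $r=-p$ explicit, which the paper's proof glosses over; this is a welcome bit of extra care rather than a deviation.
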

\begin{proof}
Let $\nu(r):=\nu(m,r,s=1,u)$.  
Since $(m',u')=(1,1)$, we have 
\begin{align*}
\nu(r)=1+r H(\nu(r)),
\end{align*}
where $H(x)=x+3x^{k+1}$.
By applying the Lagrange inversion theorem  for $H(x)=x+3x^{k+1}$, we obtain  
\begin{align*}
[r^{n}]\nu(r)=\genfrac{}{}{}{}{1}{n}[\lambda^{n-1}]H(1+\lambda)^{n}.
\end{align*}
We have
\begin{align*}
[\lambda^{n-1}]H(1+\lambda)^{n}&=[\lambda^{n-1}]\left((1+\lambda)+3(1+\lambda)^{k+1})\right)^{n}, \\
&=[\lambda^{n-1}]\left(\sum_{j=0}^{n}\genfrac{(}{)}{0pt}{}{n}{j}3^{j}(1+\lambda)^{n+kj}\right), \\
&=[\lambda^{n-1}]\left(\sum_{j=0}^{n}\sum_{l=0}^{n+kj}
\genfrac{(}{)}{0pt}{}{n}{j}3^{j}\genfrac{(}{)}{0pt}{}{n+kj}{l}\lambda^{l}\right), \\
&=\sum_{j=0}^{n}3^{j}\genfrac{(}{)}{0pt}{}{n}{j}\genfrac{(}{)}{0pt}{}{n+kj}{n-1},
\end{align*}
which completes the proof.
\end{proof}

From (3) in Proposition \ref{prop:typeIsp}, 
we compute the number of $k$-Schr\"oder paths of type $A$ of size $n$ by use of 
the Lagrange inversion theorem.
\begin{prop}
\label{prop:kSchA}
The number of $k$-Schr\"oder paths of type A of size $n$ is given by
\begin{align*}
|\mathtt{Sch}^{A}(n;k)|
=\genfrac{}{}{}{}{1}{n}\sum_{j=0}^{n}\genfrac{(}{)}{0pt}{}{n}{j}\genfrac{(}{)}{0pt}{}{n+kj}{n-1}.
\end{align*}
\end{prop}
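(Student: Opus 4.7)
The plan is to exploit part (3) of Proposition \ref{prop:typeIsp}, which already identifies $|\mathtt{Sch}^{A}(n;k)|$ with $S_{k}(n;1,0)$, the coefficient of $(-r)^{n}$ in $\nu(m,r,s=1,u)$ under the specialization $(m',u')=(1,0)$. Hence the task reduces to extracting this coefficient from the functional equation (\ref{eq:rrnu}) by applying the Lagrange inversion theorem (Theorem \ref{thrm:Lag}).

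First I would substitute $(m',u')=(1,0)$, equivalently $(m,u)=(2,1)$, into (\ref{eq:rrnu}) and set $s=1$. With these values, Definition \ref{defn:fg} collapses to $f(r)=1+r$ and $g(r)=r$, so the product over $j$ in (\ref{eq:rrnu}) becomes $\nu(r)^{k+1}$ and the equation reads
\begin{align*}
(1+r)\,\nu(r) = 1 - r\,\nu(r)^{k+1}.
\end{align*}
To align signs with the usual form of Lagrange inversion, I would change variables via $\tilde\nu(r):=\nu(m,-r,1,u)=\sum_{n\ge 0}S_{k}(n;1,0)\,r^{n}$, which converts the equation to $\tilde\nu(r)=1+r\,H(\tilde\nu(r))$ with $H(x):=x+x^{k+1}$.

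Then the first form of Theorem \ref{thrm:Lag} gives
\begin{align*}
[r^{n}]\tilde\nu(r)=\frac{1}{n}[y^{n-1}]\bigl((1+y)+(1+y)^{k+1}\bigr)^{n}.
\end{align*}
Expanding the inner bracket by the binomial theorem yields $\sum_{j=0}^{n}\binom{n}{j}(1+y)^{n+kj}$, and extracting $[y^{n-1}]$ from each summand produces $\binom{n+kj}{n-1}$. Combining these with the identification $S_{k}(n;1,0)=|\mathtt{Sch}^{A}(n;k)|$ from Proposition \ref{prop:typeIsp}(3) yields the claimed closed form.

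There is no serious obstacle here; the argument is entirely parallel to the proof of Proposition \ref{prop:SchB}, the only care being needed in verifying that the $(m',u')=(1,0)$ specialization really trivializes $f$ and $g$ to $1+r$ and $r$, and in tracking the sign swap $r\mapsto -r$ so that the hypotheses of Lagrange inversion (a polynomial $H$ with $\tilde\nu(0)=1$) are met exactly.
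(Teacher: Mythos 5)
Your proposal is correct and follows essentially the same route as the paper: the paper likewise invokes Proposition \ref{prop:typeIsp}(3) to identify $|\mathtt{Sch}^{A}(n;k)|$ with $S_{k}(n;1,0)$, specializes (\ref{eq:rrnu}) to obtain $\nu(r)=1-r(\nu(r)+\nu(r)^{k+1})$ up to the sign change $r\mapsto -r$, and applies Lagrange inversion with $H(x)=x+x^{k+1}$ to reach the same binomial sum. Your extra care with the sign convention and the verification that $f$, $g$ collapse to $1+r$ and $r$ is sound and matches the paper's computation.
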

\begin{proof}
Under the specialization $(m',u')=(1,0)$, the formal power series $\nu(r):=\nu(m=2,-r,s=1,u=1)$
satisfies the relation $\nu(r)=1-r(\nu(r)+\nu(r)^{k+1})$.
By applying Lagrange inversion theorem, we have 
\begin{align*}
[r^{n}]\nu(r)&=\genfrac{}{}{}{}{1}{n}[\lambda^{n-1}]\left((1+\lambda)+(1+\lambda)^{k+1}\right)^{n}, \\
&=\genfrac{}{}{}{}{1}{n}\sum_{j=0}^{n}\genfrac{(}{)}{0pt}{}{n}{j}\genfrac{(}{)}{0pt}{}{n+kj}{n-1},
\end{align*}
which completes the proof.
\end{proof}

The integer sequences $|\mathtt{Sch}^{A}(n;k)|$ appear in OEIS \cite{Slo} as 
A006318, A346626, A349310 and A349311 for $k=1,2,3$ and $4$ respectively.

By comparing Proposition \ref{prop:SchB} with Proposition \ref{prop:kSchA},
it is obvious that $\mathtt{Sch}^{A}(n;k)$ and $\mathtt{Sch}^{B}(n;k)$
coincide with each other when $k=1$.
By setting $k=1$, we recover the expression (\ref{eq:Sch}).

\subsubsection{Specialization and generalized Motzkin paths}
We consider the recurrence relation of type I with a specialization 
\begin{align*}
m=2, \qquad u=\genfrac{}{}{}{}{1}{2}(1+r^ks^{k(k+3)/2}).
\end{align*}
By substituting this specialization into Eq. (\ref{eq:rrnu}), 
the formal power series $\nu(r)$ satisfies 
\begin{align}
\label{eq:IMot}
\nu(r)=1-rs \nu(r)-r^{k+1}s^{(k+1)(k+2)/2}\prod_{j=0}^{k}\nu(rs^j).
\end{align}
Since we specialize $m$ and $u$, $\nu(r)$ is a formal power 
series in $r$ and $s$.

Let $\mu$ be a $k$-Motzkin path of size $n$.
The lowest path $\mu_{0}$ is given by $H^{n}$.
Recall that $|Y(\mu_0/\mu)|$ is the number of unit boxes in the skew Young 
diagram $Y(\mu_0/\mu)$ surrounded by two paths $\mu_0$ and $\mu$.
\begin{prop}
The generating function $\nu(r)$ can be expressed in terms of 
$k$-Motzkin paths. 
\begin{align}
\label{eq:nuMot}
\nu(r)=\sum_{0\le n}(-rs)^{n}\sum_{\mu\in\mathtt{Mot}(n;k)}s^{|Y(\mu_{0}/\mu)|}.
\end{align}
\end{prop}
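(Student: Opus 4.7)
The plan is to verify that the formal power series
\begin{align*}
\hat\nu(r) := \sum_{0 \le n}(-rs)^n\sum_{\mu\in\mathtt{Mot}(n;k)}s^{|Y(\mu_0/\mu)|}
\end{align*}
satisfies the functional equation (\ref{eq:IMot}); since (\ref{eq:IMot}) determines its solution uniquely once the constant term is fixed at $1$, this forces $\nu(r)=\hat\nu(r)$.

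The approach is a first-step decomposition of $k$-Motzkin paths, analogous to the decomposition used for $k$-Dyck paths earlier in this section. A non-empty path $\mu\in\mathtt{Mot}(n;k)$ starts with either a horizontal step $H$ or an up step $U$. If $\mu=H\mu'$, then $|\mu|=|\mu'|+1$ and $|Y(\mu_0/\mu)|=|Y(\mu_0/\mu')|$, because the leading $H$ contributes nothing to the area; summing over $\mu'$ produces $-rs\,\hat\nu(r)$, which matches the second term of (\ref{eq:IMot}). If $\mu$ begins with $U$, it admits a unique decomposition
\begin{align*}
\mu=U\circ\mu_1\circ D\circ\mu_2\circ D\circ\cdots\circ D\circ\mu_k\circ D\circ\mu_{k+1},
\end{align*}
where $\mu_i\in\mathtt{Mot}(a_i;k)$ and $n=k+1+\sum_i a_i$. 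This is obtained by locating the successive times at which the height descends through $k-1,k-2,\dots,0$ for the first time; each $\mu_i$ (for $1\le i\le k$) is the excursion that sits on the baseline of height $k-i+1$, while $\mu_{k+1}$ sits on height $0$.

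The heart of the argument is the area calculation. Collecting the triangular contributions of the initial $U$ step and the $k$ subsequent $D$ steps with the baseline contributions of each sub-path $\mu_i$ at its respective height, I would establish
\begin{align*}
|Y(\mu_0/\mu)|=\tfrac{k(k+1)}{2}+\sum_{i=1}^{k+1}|Y(\mu_0/\mu_i)|+\sum_{i=1}^{k}(k-i+1)\,a_i.
\end{align*}
The combined weight $(-rs)^{k+1}s^{k(k+1)/2}$ from the $k+1$ fixed non-subpath steps then simplifies to $-r^{k+1}s^{(k+1)(k+2)/2}$ (up to the sign which must be checked case-by-case in $k$), matching the prefactor in (\ref{eq:IMot}). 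The baseline-shift contribution $(k-i+1)a_i$ in the exponent of $s$ converts the sum over $\mu_i$ into $\hat\nu(rs^{k-i+1})$; after reindexing $j=k-i+1$ for $i=1,\dots,k$ together with $\hat\nu(r)$ coming from $\mu_{k+1}$, this yields exactly $\prod_{j=0}^{k}\hat\nu(rs^j)$.

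The main obstacle is the careful bookkeeping of the area identity above: one must check that the baseline contributions from the $\mu_i$'s and the step-wise area contributions from the single $U$ and the $k$ interspersed $D$'s combine with the size prefactor $(-rs)^{k+1}$ in exactly the right way to produce $-r^{k+1}s^{(k+1)(k+2)/2}\prod_{j=0}^{k}\hat\nu(rs^j)$. Once this is confirmed, the case $k=1$ is a sanity check against the classical Motzkin decomposition, and the general case follows by direct substitution into (\ref{eq:IMot}).
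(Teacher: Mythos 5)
Your proof follows the same route as the paper: a first-step decomposition of $k$-Motzkin paths into the empty path, paths beginning with $H$, and paths beginning with $U$ decomposed as $U\circ\mu_1\circ D\circ\cdots\circ D\circ\mu_{k+1}$, with your area identity being exactly the bookkeeping the paper performs (much more tersely) to arrive at the positive recursion $\nu'(r)=1+rs\,\nu'(r)+r^{k+1}s^{(k+1)(k+2)/2}\prod_{j=0}^{k}\nu'(rs^{j})$ for $\nu'(r)=\nu(-r)$. The sign you flag is a genuine subtlety rather than a defect of your argument: the substitution $r\to-r$ puts a factor $(-1)^{k+1}$ on the product term, which agrees with the minus sign in (\ref{eq:IMot}) only for even $k$, and the paper's own proof silently passes over this same point.
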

\begin{proof}
Let $\nu'(r):=\nu(-r)$ by use of Eq. (\ref{eq:nuMot}). 
We consider $k$-Motzkin paths of length $n$.
The area below the path $UD^{k}$ is $k(k+1)/2$ by a simple calculation.
Since the size of $UD^{k}$ is $k+1$, $UD^{k}$ has the weight $s^{(k+1)(k+2)/2}$.
Then, the generating function $\nu'(r)$ satisfies
\begin{align}
\label{eq:IMot2}
\nu'(r)=1+rs\nu'(r)+r^{k+1}s^{(k+1)(k+2)/2}\prod_{j=0}^{k}\nu'(rs^{j}).
\end{align}
The first term $1$ corresponds to the empty path.
The term $rs\nu'(r)$ corresponds to $k$-Motzkin paths starting from $H$.
The third term comes from $k$-Motzkin paths starting from $U$.
From these observations, $\nu(r)=\nu'(-r)$ satisfies the 
recurrence relation (\ref{eq:IMot}), 
which completes the proof.
\end{proof}

Further specialization $s=1$ implies that $\nu(r)$ is the ordinary generating 
function of $|\mathtt{Mot}(n,k)|$.
Then, we enumerate the $k$-Motzkin paths in $\mathtt{Mot}(n,k)$ by use of 
the functional relation for $\nu(r)$.
\begin{prop}
\label{prop:kMot}
The total number of $k$-Motzkin paths of size $n$ is given by
\begin{align*}
|\mathtt{Mot}(n,k)|=\genfrac{}{}{}{}{1}{n+1}\sum_{j=0}^{\lfloor n/k\rfloor}
\genfrac{(}{)}{0pt}{}{n+1}{n-kj}\genfrac{(}{)}{0pt}{}{n-kj}{j}.
\end{align*}
\end{prop}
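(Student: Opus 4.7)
The plan is to apply the Lagrange inversion theorem (Theorem~\ref{thrm:Lag}) to the ordinary generating function $M(r):=\sum_{n\ge 0}|\mathtt{Mot}(n,k)|\,r^{n}$. By the preceding proposition (namely Eq.~(\ref{eq:IMot2})) at the specialisation $s=1$, the series $\nu'(r)$ coincides with $M(r)$ and satisfies
\[
M(r)=1+rM(r)+r^{k+1}M(r)^{k+1}.
\]

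The key substitution is $T(r):=rM(r)$. Multiplying both sides of the above identity by $r$ and using $rM=T$ together with $r^{k+1}M^{k+1}=T^{k+1}$ converts the equation into the standard Lagrange form
\[
T=r\bigl(1+T+T^{k+1}\bigr).
\]
Thus $T=r\phi(T)$ with $\phi(T):=1+T+T^{k+1}$ satisfying $\phi(0)=1\neq 0$, so the second form of Theorem~\ref{thrm:Lag} applies and gives
\[
[r^{m}]T=\frac{1}{m}[T^{m-1}]\bigl(1+T+T^{k+1}\bigr)^{m}.
\]
Because $T=rM$ forces $|\mathtt{Mot}(n,k)|=[r^{n}]M=[r^{n+1}]T$, setting $m=n+1$ yields
\[
|\mathtt{Mot}(n,k)|=\frac{1}{n+1}[T^{n}]\bigl(1+T+T^{k+1}\bigr)^{n+1}.
\]

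It then remains to extract the coefficient on the right. Grouping $1+T+T^{k+1}=(1+T)+T^{k+1}$ and applying the binomial theorem gives
\[
\bigl(1+T+T^{k+1}\bigr)^{n+1}=\sum_{j\ge 0}\binom{n+1}{j}(1+T)^{n+1-j}T^{(k+1)j},
\]
and reading off $[T^{n}]$ yields $\sum_{j}\binom{n+1}{j}\binom{n+1-j}{n-(k+1)j}$. Rewriting each summand as the multinomial $(n+1)!/[j!\,(kj+1)!\,(n-(k+1)j)!]$ identifies this sum with $\sum_{j=0}^{\lfloor n/k\rfloor}\binom{n+1}{n-kj}\binom{n-kj}{j}$, which is the claimed expression (terms outside the combinatorial range vanish automatically, justifying the loose upper bound $\lfloor n/k\rfloor$).

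The main obstacle is recognising the substitution $T=rM$ that recasts the equation for $M$ into a Lagrange-admissible form: a naive attempt to apply Theorem~\ref{thrm:Lag} directly to $M-1$ fails because the resulting $\phi$ would itself depend on $r$. Once the substitution is in place, the remaining work is routine binomial bookkeeping and a factorial rearrangement.
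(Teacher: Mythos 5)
Your proposal is correct and follows essentially the same route as the paper: specialize Eq.~(\ref{eq:IMot2}) at $s=1$, substitute $T=rM$ to reach the Lagrange-admissible form $r=T/(1+T+T^{k+1})$, apply the second form of Theorem~\ref{thrm:Lag}, and extract the coefficient by a binomial expansion (the paper groups $1+(x+x^{k+1})$ where you group $(1+T)+T^{k+1}$, but both reduce to the same multinomial coefficient $(n+1)!/[j!\,(kj+1)!\,(n-(k+1)j)!]$). No gaps.
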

\begin{proof}
We specialize $s=1$ in Eq. (\ref{eq:IMot2}), and we obtain 
\begin{align*}
r=\genfrac{}{}{}{}{\widetilde{\nu}(r)}{1+\widetilde{\nu}(r)+\widetilde{\nu}(r)^{k+1}},
\end{align*}
where $\widetilde{\nu}(r)=r\nu(r)$.
By Lagrange inversion theorem, we have 
\begin{align*}
|\mathtt{Mot}(n,k)|&=[r^{n}]\nu'(r), \\
&=\genfrac{}{}{}{}{1}{n+1}[x^{n}](1+x+x^{k+1})^{n+1}, \\
&=\genfrac{}{}{}{}{1}{n+1}[x^{n}]\left(\sum_{p=0}^{n+1}\sum_{q=0}^{p}x^{p+kq}
\genfrac{(}{)}{0pt}{}{n}{p}\genfrac{(}{)}{0pt}{}{p}{q}\right), \\
&=\genfrac{}{}{}{}{1}{n+1}\sum_{j=0}^{\lfloor n/k\rfloor}
\genfrac{(}{)}{0pt}{}{n+1}{n-kj}\genfrac{(}{)}{0pt}{}{n-kj}{j},
\end{align*} 
which completes the proof.
\end{proof}

The integer sequences $|\mathtt{Mot}(n;k)|$ appear in OEIS \cite{Slo}
as A001006, A071879 and A127902 for $k=1,2$ and $3$ respectively.

\subsection{Type II}
From Definition \ref{defn:GnII}, we consider 
the following difference equation for type II recurrence relation.

Fix an integer $k\ge2$.
\begin{defn}
\label{defn:xi}
Let $\xi(m,r,s,u)$ be a formal power series satisfying 
\begin{align*}
f(rs)\xi(m,r,s,u)=1+rs(m-1)\prod_{j=0}^{k-1}\xi(m,rs^{j},s,u)-mrsu\prod_{j=0}^{k}\xi(m,rs^{j},s,u).
\end{align*}
\end{defn}

Let $\mu$ be a $k$-Dyck path of size $n$.
We call a pattern $DU$ a valley.
If $\mu$ has a pattern $D^{p}U$ with maximal $p$ at a valley $v$,
we define the size of $v$ as $|v|=p$.
We denote by $\mathcal{V}(\mu)$ the set of valleys $v$ in $\mu$.
Then, we define 
\begin{align*}
\mathrm{Val}^{II}(k;\mu):=\sum_{v\in\mathcal{V}(\mu)}\delta(0<|v|\le k-1),
\end{align*}
where $\delta(P)$ is the Kronecker delta function, that is, $\delta(P)=1$ if 
the statement $P$ is true and $\delta(P)=0$ otherwise. 

\begin{defn}
\label{defn:xiwt}
We define the weight $\mathrm{wt}^{II}(k;\mu)$ for a $k$-Dyck path $\mu$ by
\begin{align*}
\mathrm{wt}^{II}(k;\mu):=(m'+1)^{n-\mathrm{Val}^{II}(k;\mu)}(u'+1)^{n-\mathrm{Val}^{II}(k;\mu)}
(m'u'+u'+1)^{\mathrm{Val}^{II}(k;\mu)}.	
\end{align*}
\end{defn}

\begin{prop}
\label{prop:xi}
The formal power series $\xi(m,r,s,u)$ is given by 
\begin{align}
\label{eq:expxi}
\xi(m,r,s,u)=\sum_{0\le n}(-rs)^{n}
\sum_{\mu\in\mathtt{Dyck}(n;k)}s^{|Y(\mu_{0}/\mu)|}\mathrm{wt}^{II}(k;\mu).
\end{align}
\end{prop}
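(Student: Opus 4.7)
The strategy is to verify that the formal power series defined by the right-hand side of (\ref{eq:expxi}) satisfies the functional equation of Definition \ref{defn:xi}. Set
\begin{align*}
\xi_n(s) := \sum_{\mu\in\mathtt{Dyck}(n;k)} s^{|Y(\mu_{0}/\mu)|}\,\mathrm{wt}^{II}(k;\mu),
\end{align*}
with $\xi_0(s)=1$, so that the claim becomes $\xi(m,r,s,u)=\sum_{n\ge 0}(-rs)^{n}\xi_n(s)$. Substituting this ansatz into Definition \ref{defn:xi}, expanding each shifted factor as $\xi(m,rs^{j},s,u)=\sum_{n}(-rs)^{n}s^{jn}\xi_n(s)$, and collecting the coefficient of $(-rs)^n$ reduces the identity to a target recurrence of the form
\begin{align*}
\xi_n(s)-m'\,\xi_{n-1}(s) \;=\; -m'\,C^{(k)}_{n-1}(s)+mu\,C^{(k+1)}_{n-1}(s),
\end{align*}
where
\begin{align*}
C^{(\ell)}_{N}(s):=\sum_{\substack{c_1+\cdots+c_\ell=N \\ c_p\ge 0}} s^{\sum_{p=1}^{\ell}(p-1)c_p}\prod_{p=1}^{\ell}\xi_{c_p}(s).
\end{align*}

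I would next verify this recurrence combinatorially, following the template used in the type I case for $\nu(m,r,s,u)$. Every $\mu\in\mathtt{Dyck}(n;k)$ with $n\ge 1$ admits the unique decomposition (\ref{eq:mudecomp}): $\mu=U\circ\lambda_1\circ D\circ\lambda_2\circ D\circ\cdots\circ D\circ\lambda_k\circ D\circ\lambda_{k+1}$ with $\lambda_p\in\mathtt{Dyck}(c_p;k)$ and $\sum_{p=1}^{k+1}c_p=n-1$. The area $|Y(\mu_0/\mu)|$ decomposes exactly as in the type I proof, producing the $s$-weights appearing in $C^{(k+1)}_{n-1}$. The crucial step is to factorise $\mathrm{wt}^{II}(k;\mu)$ into $\prod_{p=1}^{k+1}\mathrm{wt}^{II}(k;\lambda_p)$ times a local correction that depends only on whether $\lambda_{k+1}=\emptyset$ and on the valleys created at the separating $D$'s inside the prime component. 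The algebraic identity
\begin{align*}
(m'+1)(u'+1)=m'+(m'u'+u'+1)
\end{align*}
is then used to split the weight of the leading up-step of the prime part into an $m'$-piece, which supplies the $\xi_{n-1}$ term on the left of the recurrence, and an $(m'u'+u'+1)$-piece, which is absorbed into the convolutions. The case $\lambda_{k+1}=\emptyset$ yields the $C^{(k)}_{n-1}$ contribution (after summing out the trivial factor $\xi_0=1$), and $\lambda_{k+1}\ne\emptyset$ yields the full $C^{(k+1)}_{n-1}$ contribution; the relative sign $-m'$ versus $+mu$ emerges from this split combined with the bookkeeping of the junction valley between the prime component and $\lambda_{k+1}$.

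The main obstacle is that $\mathrm{Val}^{II}(k;\mu)$ is a genuinely non-local statistic: a valley at a separator $D$ between $\lambda_p$ and $\lambda_{p+1}$ (or at the junction between the prime part and $\lambda_{k+1}$) is short exactly when the consecutive run of $D$'s meeting the subsequent $U$ has length in $\{1,\ldots,k-1\}$, and this run length depends recursively on the trailing down-run of $\lambda_p$, hence on whether $\lambda_p$ is empty, on its last prime sub-component, and so on. The plan is to re-attribute each weight factor $(m'+1)(u'+1)$ or $(m'u'+u'+1)$ in $\mathrm{wt}^{II}$ to a specific up-step of $\mu$, namely the one whose immediately preceding $D$-run determines whether the associated valley is short, so that $\mathrm{wt}^{II}$ becomes multiplicative over the sub-paths of the prime decomposition up to an unambiguous boundary term. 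Once this local form is established, matching the weighted generating function against the target recurrence is routine induction on $n$, but carrying out this re-attribution without double-counting the boundary valleys is the technical core of the argument.
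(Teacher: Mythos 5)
Your setup is correct: the target recurrence $\xi_n(s)-m'\xi_{n-1}(s)=-m'C^{(k)}_{n-1}(s)+mu\,C^{(k+1)}_{n-1}(s)$ is exactly what the coefficient of $(-rs)^n$ in Definition \ref{defn:xi} demands, and the decomposition (\ref{eq:mudecomp}) is a viable route. (The paper itself goes a different way: it splits $\xi=1+F_1+F_2$ according to whether the first valley has size $<k$ or $\ge k$ and derives the coupled system (\ref{eq:xirec})--(\ref{eq:F2rec}); that is how it tames the non-locality you worry about.) But your proposal stops precisely at the step that constitutes the proof: you announce a ``re-attribution'' making $\mathrm{wt}^{II}$ multiplicative over the decomposition and then defer it as the technical core. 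That core rests on two structural facts you never identify: a nonempty $k$-Dyck path begins with $U$ (so valleys internal to a component $\lambda_i$ have the same size inside $\mu$ as in the standalone $\lambda_i$) and ends with at least $k$ consecutive $D$'s (so any valley formed just after a nonempty component has size $\ge k+1$ and is never short). Granting these, the only short valley created by $\mu=U\circ\lambda_1\circ D\circ\cdots\circ\lambda_k\circ D\circ\lambda_{k+1}$ sits at the first up step of the first nonempty $\lambda_q$, and only when $2\le q\le k$ (that step is preceded by exactly $q-1$ separator $D$'s). Hence $\mathrm{wt}^{II}(k;\mu)=f_4\prod_i\mathrm{wt}^{II}(k;\lambda_i)\cdot(f_3/f_4)^{\delta}$ with $\delta=1$ iff $\lambda_1=\emptyset$ and some $\lambda_q$ with $2\le q\le k$ is nonempty, and the recurrence follows from $f_4-f_3=m'$ together with inclusion--exclusion on the events $\{\lambda_1=\emptyset\}$ and $\{\lambda_1=\cdots=\lambda_k=\emptyset\}$. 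Without this localization (or the paper's $F_1/F_2$ device), the argument is not a proof.

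Moreover, the two concrete attributions you do commit to are incorrect. Since $\lambda_i$ contributes $s^{(k+1-i)|\lambda_i|}$ to the area (it occupies the $(k+2-i)$-th insertion slot from the right, as in the type I proof), the $k$-fold convolution $C^{(k)}_{n-1}$, whose slots carry exponents $0,1,\ldots,k-1$, corresponds to forcing $\lambda_1=\emptyset$, not $\lambda_{k+1}=\emptyset$. And the junction valley between the prime component and $\lambda_{k+1}$ always has size at least $k$, so it is never short and plays no role in the $-m'$ versus $+mu$ split; that split comes from the valley inside the prime component described above. Both points can be checked on $\mathtt{Dyck}(2;2)$, where $\xi_2(s)=f_4^2+s\,f_3f_4+s^2f_4^2$: the $f_3$ arises from $UDUDDD$ (i.e.\ $\lambda_2\ne\emptyset$) and the $s^2$ from $UUDDDD$ (i.e.\ $\lambda_1\ne\emptyset$).
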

\begin{proof}
We show that Eq. (\ref{eq:expxi}) satisfies the 
recurrence relation in Definition \ref{defn:xi}.

Let $F_{1}(r):=F_{1}(m,r,s,u)$ be the generating function of $k$-Dyck 
paths of size greater than zero with weight defined in Definition \ref{defn:xiwt} such that 
the first valley is of size greater than or equal to $k$, or it has no valleys.
We denote by $S_{1}$ the set of such $k$-Dyck paths.
Similarly, $F_{2}(r):=F_{2}(m,r,s,u)$ be the generating function of $k$-Dyck paths
such that the size of the first valley is less than $k$.
We denote by $S_{2}$ the set of such $k$-Dyck paths.
Note that a $k$-Dyck path in $F_{2}(r)$ has at least one valley, which implies 
that the length of paths are greater than one.
Let $\xi(r):=\xi(m,-r,s,u)$.
Then, we have the following system of functional equations:
\begin{align}
\label{eq:xirec}
\xi(r)&=1+F_{1}(r)+F_{2}(r), \\
\label{eq:F1rec}
F_{1}(r)&=rsf_4\xi(r)+rsf_{4}F_{1}(rs^{k})\prod_{j=0}^{k-1}\xi(rs^{j}), \\
\label{eq:F2rec}
F_{2}(r)&=rsf_{3}\left(\prod_{j=1}^{k-1}\xi(rs^{j})-1\right)\xi(r)+rsf_{4}F_{2}(rs^k)\prod_{j=0}^{k-1}\xi(rs^{j}),
\end{align}
where 
\begin{align}
\label{eq:f3f4}
f_{3}=(m'u'+u'+1), \qquad f_{4}=(m'+1)(u'+1).
\end{align}
Below, we briefly explain Eqs. (\ref{eq:xirec}) to (\ref{eq:F2rec}). 
The first equation (\ref{eq:xirec}) is obvious from the definitions of $F_{1}(r)$ and $F_{2}(r)$.
The constant term $1$ corresponds to the empty path.	

Since the length of a $k$-Dyck path $\mu$ in $S_{1}$ is not zero, $\mu$ has the following 
form:
\begin{align}
\label{eq:ins}
U\circ \mu_1 \circ D\circ \mu_2 \circ D\circ \ldots \circ \mu_{k} \circ D\circ \mu_{k+1},
\end{align}
where $\mu_{i}$, $1\le i\le k+1$, is a $k$-Dyck path.
The first term in Eq. (\ref{eq:F1rec}) corresponds to the paths with $\mu_{i}=\emptyset$, $1\le i\le k$.
To have the contribution in $F_{1}(r)$ requires that $\mu_1$ is also in $S_{1}$.
Note that the weight involves the area of a $k$-Dyck path. 
This is realized by $r\rightarrow rs^{k+1-j}$ for the $j$-th path $\mu_{j}$.
By combining these observations, we have the second term in Eq. (\ref{eq:F1rec}).

Similarly, we obtain Eq. (\ref{eq:F2rec}). 
Since the length of a path in $S_2$ is larger than one, at least one $\mu_{i}$ is 
not empty for $1\le i\le k$. 
This corresponds to $\left(\prod_{j=1}^{k-1}\xi(rs^{j})-1\right)$.
The second term corresponds to the path $\mu_1\in S_{2}$.
Note that any $k$-Dyck path has at least $k$ down steps at the last end, 
the path in Eq. (\ref{eq:ins}) with $\mu_1\in S_{2}$ has a factor $f_4$ since 
all newly obtained valleys have the size greater than $k$.

From Eqs. (\ref{eq:xirec}) to (\ref{eq:F2rec}), we have 
\begin{align*}
\xi(r)-1&=F_{1}(r)+F_{2}(r), \\
&=rsm'\xi(r)+rsf_3\prod_{j=0}^{k-1}\xi(rs^{j})+rsf_4\left(\xi(rs^{k})-1\right)\prod_{j=1}^{k-1}\xi(rs^{j}), \\
&=rsm'\xi(r)-m'rs\prod_{j=0}^{k-1}\xi(rs^{j})+rsf_4\prod_{j=1}^{k}\xi(rs^{j}),
\end{align*}
where we have used $f_{4}-f_{3}=m'$.
Since $\xi(-r)$ satisfies the recurrence relation in Definition \ref{defn:xi}, 
this completes the proof.
\end{proof}

\begin{prop}
\label{prop:II22}
Suppose that $(m,r,s,u)=(2,r,1,2)$ and $\xi(r):=\xi(2,r,1,2)$.
Then, we have 
\begin{align}
\label{eq:IIxin}
[r^{n}]\xi(-r)=
\genfrac{}{}{}{}{1}{n}
\sum_{p=0}^{n}\sum_{q=0}^{n-p}3^{n-p-q}4^{q}\genfrac{(}{)}{0pt}{}{n}{p}\genfrac{(}{)}{0pt}{}{n-p}{q}
\genfrac{(}{)}{0pt}{}{nk-p(k-1)}{n-1-q}.
\end{align}
\end{prop}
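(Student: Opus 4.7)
The plan is to reduce the functional equation of Definition \ref{defn:xi} to a single-variable form at the given specialization and then apply the Lagrange inversion theorem in the first form of Theorem \ref{thrm:Lag}.

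First, I would substitute $(m,s,u)=(2,1,2)$ into the defining relation for $\xi$. Since $f(rs)=1+r$ and the two products collapse to powers (as $s=1$), the equation becomes
\begin{equation*}
(1+r)\,\xi(r) \;=\; 1 + r\,\xi(r)^{k} - 4r\,\xi(r)^{k+1}.
\end{equation*}
Setting $\eta(r):=\xi(-r)$ and replacing $r\mapsto -r$ rearranges this to
\begin{equation*}
\eta(r) \;=\; 1 + r\bigl(\eta(r) - \eta(r)^{k} + 4\,\eta(r)^{k+1}\bigr) \;=\; 1 + r\,H(\eta(r)),
\end{equation*}
where $H(x)=x-x^{k}+4x^{k+1}$. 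This is the precise hypothesis of the first Lagrange inversion formula in Theorem \ref{thrm:Lag}, which therefore yields
\begin{equation*}
[r^{n}]\eta(r) \;=\; \frac{1}{n}[y^{n-1}]\,H(1+y)^{n}.
\end{equation*}

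Next I would massage $H(x)$ into a form tailored to produce the desired binomial sum. The factorization $H(x)=x+x^{k}(4x-1)$ allows a binomial expansion in the two summands: with $a=x$ and $b=x^{k}(4x-1)$,
\begin{equation*}
H(1+y)^{n} \;=\; \sum_{j=0}^{n}\binom{n}{j}(1+y)^{n-j}\bigl[(1+y)^{k}(4(1+y)-1)\bigr]^{j} \;=\; \sum_{j=0}^{n}\binom{n}{j}(1+y)^{n+(k-1)j}(3+4y)^{j}.
\end{equation*}
Expanding $(3+4y)^{j}=\sum_{q=0}^{j}\binom{j}{q}3^{j-q}4^{q}y^{q}$ and then reading off the coefficient of $y^{n-1-q}$ from $(1+y)^{n+(k-1)j}$ gives
\begin{equation*}
[y^{n-1}]H(1+y)^{n} \;=\; \sum_{j=0}^{n}\sum_{q=0}^{j}\binom{n}{j}\binom{j}{q}3^{j-q}4^{q}\binom{n+(k-1)j}{n-1-q}.
\end{equation*}

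Finally, I would reindex via $p=n-j$, noting the identities $\binom{n}{j}=\binom{n}{p}$, $\binom{j}{q}=\binom{n-p}{q}$, $3^{j-q}=3^{n-p-q}$, and $n+(k-1)j = nk - p(k-1)$. Dividing by $n$ produces exactly the claimed formula \eqref{eq:IIxin}. There is no genuine obstacle here: the only subtle point is choosing the factorization $H(x)=x+x^{k}(4x-1)$ rather than grouping $-x^{k}+4x^{k+1}$ alongside $x$ as three separate summands, since the trinomial expansion would not match the stated form without the reindexing $p=n-j$ suggested by the binary split.
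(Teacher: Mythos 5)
Your proposal is correct and follows essentially the same route as the paper: specialize the functional equation of Definition \ref{defn:xi} to get a polynomial relation $1+rH(\cdot)$, apply the first form of the Lagrange inversion theorem, and expand $\bigl((1+y)+(1+y)^{k}(4y+3)\bigr)^{n}$ binomially. The only cosmetic differences are that you absorb the sign by working with $\eta(r)=\xi(-r)$ from the outset (the paper instead carries a $(-1)^{n}$ through the computation for $\xi(r)$) and that you index the outer binomial sum by $j$ and reindex to $p=n-j$ at the end, where the paper uses $p$ directly.
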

Let $N:=\mathrm{Val}^{II}(k;\mu)$ for a $k$-Dyck path $\mu$. 
Then, the proposition above implies that 
the weighted sum of $k$-Dyck paths $\mu$ of size $n$ such that $\mu$ has 
the weight $3^{N}4^{n-N}$ is given by Eq. (\ref{eq:IIxin}).
\begin{proof}[Proof of Proposition \ref{prop:II22}]
If we set $(m,s,u)=(2,1,2)$, we have 
\begin{align*}
\xi(r)=1+r\widetilde{H}(\xi(r)),
\end{align*}
where $\widetilde{H}(x)=-x+x^{k}-4x^{k+1}$.
By Lagrange inversion theorem,  we have 
\begin{align*}
[r^{n}]\xi(r)&=\genfrac{}{}{}{}{(-1)^{n}}{n}[x^{n-1}]\left(\widetilde{H}(1+x)\right)^{n}, \\
&=\genfrac{}{}{}{}{(-1)^{n}}{n}[x^{n-1}]\left((1+x)+(1+x)^{k}(4x+3)\right)^{n}, \\
&=\genfrac{}{}{}{}{(-1)^{n}}{n}[x^{n-1}]\sum_{p=0}^{n}\genfrac{(}{)}{0pt}{}{n}{p}
(1+x)^{nk-p(k-1)}(4x+3)^{n-p}, \\
&=\genfrac{}{}{}{}{(-1)^{n}}{n}[x^{n-1}]\sum_{p=0}^{n}\sum_{q=0}^{nk-p(k-1)}\sum_{r=0}^{n-p}
4^{r}3^{n-p-r}\genfrac{(}{)}{0pt}{}{n}{p}\genfrac{(}{)}{0pt}{}{nk-p(k-1)}{q}\genfrac{(}{)}{0pt}{}{n-p}{r}x^{q+r}, \\
&=\genfrac{}{}{}{}{(-1)^{n}}{n}
\sum_{p=0}^{n}\sum_{q=0}^{n-p}3^{n-p-q}4^{q}\genfrac{(}{)}{0pt}{}{n}{p}\genfrac{(}{)}{0pt}{}{n-p}{q}
\genfrac{(}{)}{0pt}{}{nk-p(k-1)}{n-1-q},
\end{align*}
which completes the proof.
\end{proof}
From Proposition \ref{prop:II22}, we have $\xi(r)=1+4r\xi(r)^2$ when $k=1$. 
By Lagrange inversion theorem,  
\begin{align*}
[r^{n}]\xi(-r)=\genfrac{}{}{}{}{4^n}{n+1}\genfrac{(}{)}{0pt}{}{2n}{n},
\end{align*}
for $k=1$. 
These numbers are a product of $4^n$ and the $n$-th Catalan number, 
and appear as a sequence A052704 (or equivalently A151403) in OEIS \cite{Slo}.

Below, we consider a formal power series $\alpha(r)$ associated to 
the recurrence relation given in Definition \ref{defn:HII}.
The formal power series $\alpha(r)$ satisfies 
\begin{align}
\label{eq:alpha}
1=f(rs)\alpha(r)-rs(m-1)\alpha(r)\alpha(rs)+mrsu\prod_{j=0}^{k}\alpha(rs^{j}).
\end{align}
The specialization $k=1$ with $(m,s,u)=(1,1,1)$ gives the formal power series 
for the Dyck paths.
Similarly, the specialization $k=1$ with $(m,s,u)=(2,1,1)$ gives the 
formal power series for the Schr\"oder paths.

We consider the specialization $(m,r,s,u)=(2,-r,1,1)$ of the relation (\ref{eq:alpha}).
The $\alpha(r)$ satisfies 
\begin{align*}
\alpha(r)=1+r(\alpha(r)-\alpha(r)^{2}+2\alpha(r)^{k+1}).
\end{align*}
By applying the Lagrange inversion theorem, we have 
\begin{align}
\label{eq:alphaSch}
\begin{split}
[r^{n}]\alpha(r)&=
\genfrac{}{}{}{}{1}{n}[y^{n-1}]\left(-y(1+y)+2(1+y)^{k+1}\right)^{n}, \\
&=\genfrac{}{}{}{}{1}{n}\sum_{j=0}^{n}(-1)^{j}2^{n-j}
\genfrac{(}{)}{0pt}{}{n}{j}\genfrac{(}{)}{0pt}{}{k(n-j)+n}{n-1-j}.
\end{split}
\end{align}
The integer sequence (\ref{eq:alphaSch}) for $k=1$ appears in OEIS \cite{Slo}
as the large Schr\"oder numbers A006318.
First few values of the sequences are 
\begin{align*}
&2, 10, 70, 566, 4970, 46050, 443134, 4385790, \ldots \quad(k=2),\\
&2, 14, 142, 1674, 21498, 291814, 4118006, 59808210,\ldots \quad (k=3), \\
&2, 18, 238, 3670, 61754, 1099322, 20356118, 388071310,\ldots \quad (k=4).
\end{align*}

Define 
\begin{align*}
A(n,k,j):=\genfrac{}{}{}{}{1}{n}\genfrac{(}{)}{0pt}{}{n}{j}\genfrac{(}{)}{0pt}{}{k(n-j)+n}{n-1-j}.
\end{align*}
We introduce combinatorial lattice paths whose total number is given by $A(n,k,j)$.

Let $\mathcal{A}(n,k,j)$ be the set of lattice paths satisfying the following four conditions: 
\begin{enumerate}
\item A path from $(0,0)$ to $((k+1)n,0)$ which is in the first quadrant.
\item A path consists of three steps $u=(k,1)$, $U=(1,k)$ and $d=(1,-1)$.
\item There is no peaks of the form $ud$.
\item The number of $u$ in a path is exactly $j$.
\end{enumerate}

\begin{prop}
We have 
\begin{align*}
|\mathcal{A}(n,k,j)|=A(n,k,j).
\end{align*}
\end{prop}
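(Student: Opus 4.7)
The plan is to enumerate $\mathcal{A}(n,k,j)$ by a combinatorial collapse of the $u$-runs, followed by an application of the cycle lemma. The key structural observation is that any path $\mu\in\mathcal{A}(n,k,j)$ must end with a $d$-step (since $\mu$ terminates at height $0$ inside the first quadrant, neither a final $u$ nor a final $U$ is feasible), and the forbidden pattern $ud$ then forces every $u$-step to be immediately followed by $u$ or $U$. Consequently each maximal run of $u$'s is terminated by a $U$-step, and the map
\begin{align*}
\Phi:\mathcal{A}(n,k,j)\longrightarrow\mathcal{C}(n,k,j)
\end{align*}
that collapses each block $u^{a}U$ (with $a\geq 0$) into a single super-up step of vertical size $a+k$ is a well-defined bijection. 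Here $\mathcal{C}(n,k,j)$ denotes the set of lattice paths from $(0,0)$ to $(M,0)$, with $M:=n+k(n-j)$, consisting of $n-j$ super-up steps $(1,v_i)$ of integer sizes $v_i\geq k$ satisfying $\sum_i v_i=j+k(n-j)$, together with $j+k(n-j)$ down steps $(1,-1)$, which stay weakly above the $x$-axis. The inverse of $\Phi$ unfolds each super-up of size $v$ back into $u^{v-k}U$, and since intermediate heights inside a block never drop below the base height of that block, the non-negativity constraint is preserved.

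Next I apply the cycle lemma (Dvoretzky--Motzkin) to enumerate $\mathcal{C}(n,k,j)$. Ignoring the non-negativity constraint, the number of step sequences of length $M$ with the prescribed multiset is $\binom{M}{n-j}\binom{n-1}{j}$, in which $\binom{M}{n-j}$ picks the positions of the super-up steps and $\binom{n-1}{j}$ counts the compositions of $j+k(n-j)$ into $n-j$ parts each $\geq k$ (equivalently, weak compositions of $j$ into $n-j$ parts). I then append one additional $(1,-1)$ step at the end, producing a sequence of length $M+1$ whose vertical entries lie in $\{-1\}\cup\{v:v\geq k\}$ and sum to $-1$. Such sequences are automatically aperiodic, because any period $p$ dividing $M+1$ with $p<M+1$ would force the partial sum over one period to equal $-p/(M+1)$, which is not an integer. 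The cycle lemma then tells us that exactly one of the $M+1$ cyclic rotations of such a sequence is Dyck-like, and hence
\begin{align*}
|\mathcal{C}(n,k,j)|=\frac{1}{M+1}\binom{M+1}{n-j}\binom{n-1}{j}.
\end{align*}

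It remains to match this count with $A(n,k,j)=\tfrac{1}{n}\binom{n}{j}\binom{M}{n-1-j}$. A direct factorial expansion shows that both expressions coincide with
\begin{align*}
\frac{M!\,(n-1)!}{j!\,(n-1-j)!\,(n-j)!\,(k(n-j)+j+1)!},
\end{align*}
which finishes the proof. I expect the main obstacle to be the precise justification of the bijection $\Phi$: one must verify that no path in $\mathcal{A}(n,k,j)$ can terminate with a $u$-step and that, together with the forbidden pattern $ud$, this terminal constraint really forces every $u$ to occur as the initial segment of a unique block $u^{a_i}U$. Once $\Phi$ is established the remaining steps reduce to a routine cycle-lemma computation and an elementary factorial manipulation.
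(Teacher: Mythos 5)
Your proof is correct, and it takes a genuinely different route from the one in the paper. The paper works at the level of generating functions: it reads the formula for $A(n,k,j)$ backwards through the Lagrange inversion theorem to obtain the functional equation $\beta(r)=1+r\bigl(z(\beta(r)-1)\beta(r)+\beta(r)^{k+1}\bigr)$, and then verifies by a first-return decomposition (inserting subpaths into the minimal blocks $Ud^{k}$ and $ud$) that the paths in $\mathcal{A}(n,k,j)$ satisfy the same equation. You instead give a direct bijective count: the observation that the path must end in $d$ together with the forbidden pattern $ud$ forces every maximal $u$-run to terminate in a $U$, the collapse $u^{a}U\mapsto(1,a+k)$ is a height-preserving bijection onto sequences of $n-j$ up-steps of sizes $\ge k$ summing to $j+k(n-j)$ together with $j+k(n-j)$ down-steps, and the cycle lemma then yields $\frac{1}{M+1}\binom{M+1}{n-j}\binom{n-1}{j}$ with $M=n+k(n-j)$, which matches $A(n,k,j)$ by the factorial computation you display (I checked it; both sides equal $\frac{M!\,(n-1)!}{j!\,(n-1-j)!\,(n-j)!\,(k(n-j)+j+1)!}$, and the degenerate case $j=n$ is handled automatically by $\binom{n-1}{n}=0$). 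Your argument is more elementary and self-contained --- it avoids Lagrange inversion entirely and produces an alternative closed form for $A(n,k,j)$ as a by-product --- while the paper's argument has the advantage of connecting $\mathcal{A}(n,k,j)$ directly to the functional equation from which these numbers arose. One small presentational point: the cycle-lemma step should be phrased as counting orbits in the set of \emph{all} $\binom{M+1}{n-j}\binom{n-1}{j}$ sequences of length $M+1$ (each aperiodic orbit of size $M+1$ containing exactly one sequence whose proper prefix sums are nonnegative, necessarily ending in a down-step), rather than as appending a $d$ to each of the $\binom{M}{n-j}\binom{n-1}{j}$ length-$M$ sequences; your final formula uses the correct binomial $\binom{M+1}{n-j}$, so only the wording needs adjusting.
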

\begin{proof}
Let $\beta(r)$ be a formal power series defined by
\begin{align*}
\beta(r):=\sum_{i\ge0}\sum_{j=0}^{n}r^{i}z^{j}A(n,k,j).
\end{align*}
By applying the Lagrange inversion theorem to $A(n,k,j)$,
$\beta(r)$ should satisfy the following recurrence relation:
\begin{align}
\label{eq:beta}
\beta(r)=1+r(z(\beta(r)-1)\beta(r)+\beta(r)^{k+1}).
\end{align}
Recall $\mathcal{A}(n,k,j)$ consists of lattice paths satisfying 
the conditions (1) to (4).
A path in $\mathcal{A}(n,k,0)$ consists of $k$-Dyck paths 
consisting of $n$ $U$'s and $nk$ $d$'s.
From (2), we have $Ud^{k-1}=u=(k,1)$.
From (3), the step $u$ is followed by $U$ or $u$.
For $n=1$, we have a unique path: $Ud^{k}$. 
Note that the path $ud$ is eliminated by the condition (2).
For general $n\ge2$, we have two ways to obtain the lattice paths in $\mathcal{A}(n,k,j)$.
First, we insert $k+1$ paths into $Ud^{k}$ between $i$-th and $i+1$-th steps with $1\le i\le k-1$ and 
the right to the $k$-th step. 
Secondly, we insert two paths into $ud$ such that we insert a path of length $n\ge1$ between 
$u$ and $d$ and a path of length $n\ge0$ right to $d$.
This is because the pattern $ud$ is not admissible and we have to insert a path of length $n\ge1$
between $u$ and $d$.
Since the path $Ud^{k}$ (resp. $ud$) gives the factor $r$ (resp. $rz$),
the formal power series $\beta(r)$ satisfies Eq. (\ref{eq:beta}), which 
completes the proof. 
\end{proof}
For $k=1$, the integer sequence $A(n,1,j)$ appears in OEIS \cite{Slo} as A033282.
First few values of $A(n,k,j)$ are 
\begin{align*}
&\{1\}, \{3, 1\}, \{12, 7, 1\}, \{55, 45, 12, 1\}, \{273, 286, 110, 18, 1\}, \ldots \quad (k=2), \\
&\{1\}, \{4, 1\}, \{22, 9, 1\}, \{140, 78, 15, 1\}, \{969, 680, 182, 22, 1\}, \ldots \quad (k=3), \\
&\{1\}, \{5, 1\}, \{35, 11, 1\}, \{285, 120, 18, 1\}, \{2530, 1330, 272, 26, 1\}, \ldots \quad (k=4).
\end{align*}
For example, $\{12,7,1\}$ for $k=2$ means that $A(3,2,0)=12$, $A(3,2,1)=7$ and $A(3,2,2)=1$.

\subsection{Type III}
Fix an integer $k\ge2$.
We define a formal power series $\kappa(r):=\kappa(m,r,s,u)$ as follows.
\begin{defn}
\label{defn:kappa}
The formal power series $\kappa(r)$ satisfies 
the recurrence relation
\begin{align*}
\kappa(r)=1-rs(m-1)\prod_{j=0}^{k-1}\kappa(rs^{j})-rs(m(u-1)+1)\prod_{j=0}^{k}\kappa(rs^{j}).
\end{align*}
\end{defn}

The formal power series $\kappa(r)$ has an expression in terms of $k$-Dyck paths.
Given a $k$-Dyck path $\mu$, let $\mathrm{Peak}(\mu)$ be the number of 
peaks in $\mu$, {\it i.e.}, the number of the pattern $UD$ in $\mu$.
Then, we define the weight $\mathrm{wt}^{III}(\mu)$ for $\mu$ by
\begin{align}
\label{eq:wtIII}
\mathrm{wt}^{III}(\mu):=
(m'+1)^{\mathrm{Peak}(\mu)}(u'+1)^{\mathrm{Peak}(\mu)}(m'u'+u'+1)^{n-\mathrm{Peak}(\mu)}.
\end{align}

\begin{remark}
Note the definition of the weight (\ref{eq:wtIII}) is similar to the one fro type I (\ref{defn:wtI}).
The difference between type I and type III is that not only the pattern $UD^k$ but also the pattern $UD^{p}$ 
with $1\le p\le k-1$  are  counted as a peak in type III.
Given $\mu\in\mathtt{Dyck}(n;k)$, the difference $\mathrm{Peak}(\mu)-\mathrm{Peak}(k;\mu)$ 
is always non-negative.	
\end{remark}

\begin{prop}
The formal power series $\kappa(r)$ is given by
\begin{align}
\label{eq:nu}
\kappa(r)=\sum_{0\le n}(-rs)^{n}
\sum_{\mu\in\mathtt{Dyck}(n;k)}
s^{|Y(\mu_{0}/\mu)|}\mathrm{wt}^{III}(\mu).
\end{align}
\end{prop}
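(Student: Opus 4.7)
The plan is to set
\begin{align*}
\kappa_n(s):=\sum_{\mu\in\mathtt{Dyck}(n;k)}s^{|Y(\mu_{0}/\mu)|}\mathrm{wt}^{III}(\mu),
\qquad \kappa'(r):=\sum_{n\ge 0}(-rs)^n \kappa_n(s),
\end{align*}
and show that $\kappa'(r)$ satisfies the functional equation in Definition~\ref{defn:kappa}. Since $\kappa_0(s)=1$ and that equation determines every coefficient uniquely, this forces $\kappa'(r)=\kappa(r)$, which is the claim \eqref{eq:nu}.

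The combinatorial input is the unique decomposition
\begin{align*}
\mu=U\circ\lambda_1\circ D\circ\lambda_2\circ D\circ\cdots\circ\lambda_k\circ D\circ\mu',
\end{align*}
where $\lambda_1,\ldots,\lambda_k$ and $\mu'$ are $k$-Dyck paths. The area decomposes as
\begin{align*}
|Y(\mu_0/\mu)|=\sum_{i=1}^{k}(k+1-i)|\lambda_i|+\sum_{i=1}^{k}|Y(\mu_0/\lambda_i)|+|Y(\mu_0/\mu')|,
\end{align*}
since $\lambda_i$ inserted into slot $i$ is lifted $(k+1-i)$ rows above the zigzag baseline within its horizontal extent, while $\mu'$ is concatenated at the baseline. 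The feature specific to type III is that a peak is the pattern $UD$ (not $UD^k$), so the leading $U$ creates a new peak precisely when $\lambda_1=\emptyset$; hence
\begin{align*}
\mathrm{Peak}(\mu)=\mathrm{Peak}(\mu')+\sum_{i=1}^{k}\mathrm{Peak}(\lambda_i)+\delta(\lambda_1=\emptyset).
\end{align*}

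Abbreviating $A:=(m'+1)(u'+1)$ and $B:=m'u'+u'+1$ so that $A-B=m'$, multiplicativity of $\mathrm{wt}^{III}$ combined with this peak identity gives
\begin{align*}
\mathrm{wt}^{III}(\mu)
=B\prod_{i=1}^{k}\mathrm{wt}^{III}(\lambda_i)\,\mathrm{wt}^{III}(\mu')
+m'\,\delta(\lambda_1=\emptyset)\prod_{i=2}^{k}\mathrm{wt}^{III}(\lambda_i)\,\mathrm{wt}^{III}(\mu').
\end{align*}
Summing over all nonempty $\mu$ of size $n$ and indexing the free slots from the right (so $\mu'$ becomes $j=0$, $\lambda_k$ becomes $j=1$, and so on up to $\lambda_1$ at $j=k$), the area shift in each slot becomes $s^{nj}$. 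Using $\kappa'(rs^j)=\sum_n(-rs)^n s^{nj}\kappa_n(s)$, the $B$-term yields the coefficient of $(-rs)^n$ in $-rsB\prod_{j=0}^{k}\kappa'(rs^j)$, while the $m'$-term (with $\lambda_1=\emptyset$, removing the $j=k$ slot) yields the coefficient in $-rsm'\prod_{j=0}^{k-1}\kappa'(rs^j)$. Together with the empty path giving the constant $1$, this reproduces the recurrence of Definition~\ref{defn:kappa}. The main technical obstacle is justifying the area shift $(k+1-i)|\lambda_i|$, which is handled by a short induction on the nesting depth of the prime structure; once this is in hand, the remainder of the argument is bookkeeping that closely parallels the type~I proof.
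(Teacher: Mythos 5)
Your proposal is correct and follows essentially the same route as the paper: the first-return decomposition $U\circ\lambda_1\circ D\circ\cdots\circ\lambda_k\circ D\circ\mu'$, the observation that the leading $U$ contributes a peak exactly when $\lambda_1=\emptyset$ (so the weight splits as $f_3\prod_{j=0}^{k}+m'\prod_{j=0}^{k-1}$ via $f_4=f_3+m'$), and the area shifts $s^{jn_j}$ matching $\kappa(rs^j)$. The only (cosmetic) difference is that you keep the signed series $\sum(-rs)^n\kappa_n$ throughout instead of passing to $\kappa(-r)$ as the paper does, and you spell out the peak and area bookkeeping that the paper delegates to the proofs of the type I and type II propositions.
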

\begin{proof}
Let $\kappa'(r)$ be the generating function $\kappa'(r):=\kappa(-r)$ given by Eq. (\ref{eq:nu}).
We prove the proposition in a similar manner to the proof of Proposition \ref{prop:xi}.
Then, the generating function $\kappa'(r)$ satisfies 
\begin{align*}
\kappa'(r)&=1+rsf_{4}\prod_{j=0}^{k-1}\kappa'(rs^{j})+rsf_3(\kappa'(rs^{k})-1)\prod_{j=0}^{k-1}\kappa'(rs^{j}), \\
&=1+rsm'\prod_{j=0}^{k-1}\kappa'(rs^{j})+rsf_3\prod_{j=0}^{k}\kappa'(rs^{j}),
\end{align*}
where $f_3$ and $f_4$ are defined in Eq. (\ref{eq:f3f4}).
From this recurrence relation, it is obvious that $\kappa(r)$ satisfies the recurrence relation 
of type III.
\end{proof}

We define
\begin{align*} 
S^{III}(n; m',u'):=(-1)^{n}[r^{n}]\kappa(m',r,s=1,u').
\end{align*}
\begin{prop}
\label{prop:SIII}
The number $S^{III}(n;m',u')$ with $m',u'\in\{0,1\}$ enumerates the 
following generalized lattice paths.
\begin{enumerate}
\item $S^{III}(n;0,0)$ is the number of $k$-Dyck paths of size $n$.
\item $S^{III}(n;0,1)$ is the number of $k$-Dyck paths of size $n$ with two types of up steps.
Namely, $S^{III}(n;0,1)=2^{n}|\mathtt{Dyck}(n;k)|$.
\item $S^{III}(n;1,0)$ is the number of paths from $(0,0)$ to $((k+1)n,0)$ which stay 
in the first quadrant and where each step is $(1,k)$, $(2,k-1)$ or $(1,-1)$.
\item $S^{III}(n;1,1)$ is the weighted sum of $k$-Dyck paths $\mu$ of size $n$ where 
the path $\mu$ has a weight $4^{\mathrm{Peak}(\mu)}3^{n-\mathrm{Peak}(\mu)}$.
\end{enumerate}
\end{prop}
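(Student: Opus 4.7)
The plan is to use the explicit formula
\[
S^{III}(n;m',u')=\sum_{\mu\in\mathtt{Dyck}(n;k)}\mathrm{wt}^{III}(\mu)
\]
which follows from setting $s=1$ in \eqref{eq:nu} and collecting the coefficient of $(-r)^{n}$. Each claim then reduces to (i) evaluating the three factors in \eqref{eq:wtIII} at the chosen specialization and (ii) providing a combinatorial interpretation. Parts (1), (2) and (4) are essentially bookkeeping: at $(m',u')=(0,0)$ all three factors collapse to $1$; at $(m',u')=(0,1)$ the weight reduces to $2^{\mathrm{Peak}(\mu)}\cdot 2^{n-\mathrm{Peak}(\mu)}=2^{n}$, and a factor $2^{n}$ per $k$-Dyck path is the generating weight for assigning one of two colors to each of its $n$ up steps; at $(m',u')=(1,1)$ the weight is $4^{\mathrm{Peak}(\mu)}3^{n-\mathrm{Peak}(\mu)}$, which is exactly the claim.

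The substantive part is (3), for which $(m',u')=(1,0)$ gives $\mathrm{wt}^{III}(\mu)=2^{\mathrm{Peak}(\mu)}$. The plan is to set up a bijection between weighted $k$-Dyck paths (each peak carrying two choices) and three-step paths. First, encode a $k$-Dyck path $\mu$ by replacing each up step $U=(0,1)$ with $\widetilde U=(1,k)$ and each down step $D=(1,0)$ with $\widetilde d=(1,-1)$; the linear map $(a,b)\mapsto(a+b,\,kb-a)$ sends $(0,0)\to(0,0)$ and $(kn,n)\to((k+1)n,0)$, and converts the constraint $b\ge a/k$ into the first-quadrant condition, so the encoded path is a lattice path from $(0,0)$ to $((k+1)n,0)$ with steps $\widetilde U,\widetilde d$ staying in the first quadrant. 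A peak $UD$ in $\mu$ becomes a pattern $\widetilde U\widetilde d$; since $\widetilde U+\widetilde d=(2,k-1)$, each such pattern may be independently replaced by a single step $(2,k-1)$ or left alone, and the first-quadrant condition is preserved because the omitted intermediate vertex lies above the endpoint.

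The map is $2^{\mathrm{Peak}(\mu)}$-to-one when we forget which peaks were merged; conversely, given any path from $(0,0)$ to $((k+1)n,0)$ in the first quadrant with steps $(1,k),(2,k-1),(1,-1)$, expand every $(2,k-1)$ step into $\widetilde U\widetilde d$ to recover a $k$-Dyck path, and the set of merged positions is recorded by which $\widetilde U\widetilde d$'s in the expansion came from $(2,k-1)$. A brief step count confirms the arithmetic: if such a three-step path has $a$ steps $(1,k)$, $b$ steps $(2,k-1)$, $c$ steps $(1,-1)$, then $a+2b+c=(k+1)n$ and $ka+(k-1)b-c=0$ force $a+b=n$ and $c=kn$, matching the step counts of a $k$-Dyck path of size $n$.

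The main obstacle is verifying that the bijection in (3) behaves well with respect to the first-quadrant condition and that merging choices at distinct peaks never interact. The geometric check above handles this: merging a peak shortcuts a ``roof'' into a strictly lower segment, so no previously valid path leaves the first quadrant, and expansions of $(2,k-1)$ steps in the three-step path produce peaks that are pairwise disjoint in the encoded $k$-Dyck path, so the $2^{\mathrm{Peak}(\mu)}$ choices are genuinely independent. With this bijection in place, the four claims are immediate from the weight formula \eqref{eq:wtIII}.
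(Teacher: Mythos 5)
Your proof is correct and follows essentially the same route as the paper: parts (1), (2) and (4) by direct evaluation of the weight $\mathrm{wt}^{III}$, and part (3) via the bijection sending $U\mapsto(1,k)$, $D\mapsto(1,-1)$ and interpreting the factor $2^{\mathrm{Peak}(\mu)}$ as the independent choice of merging each peak $UD$ into a single $(2,k-1)$ step (your explicit linear map and first-quadrant check just make precise what the paper calls an ``obvious bijection''). One small arithmetic slip in your sanity check: the two step-count equations give $c=kn-b$ rather than $c=kn$ (each of the $b$ merged steps absorbs one down step), which is still consistent with the expansion recovering a $k$-Dyck path of size $n$, so the argument is unaffected.
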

\begin{proof}
(1), (2) and (4) are obvious from the definition of the weight given to a path $\mu$.

(3). We have an obvious bijection between a $k$-Dyck path and a lattice path from $(0,0)$ to $((k+1)n,0)$
such that it stays in the first quadrant and each step is either $(1,k)$ or $(1,-1)$.
The specialization $(m',u')=(1,0)$ gives the weight $2^{\mathrm{Peak}(\mu)}$ to a $k$-Dyck path.
Recall that a peak corresponds to the pattern $UD$ in $\mu$.
In the language of a lattice path form $(0,0)$ to $((k+1)n,0)$, a peak corresponds to 
the consecutive steps $(1,k)$ and $(1,-1)$.
We have the weight two on a peak in $k$-Dyck path. This corresponds to introduction of two local paths
with weight one.
The first one is a path $\mu$ with a peak $UD$ and the second one is a path obtained from 
$\mu$ by replacing $UD$ by a step $(2,k-1)$.
Note that the newly obtained paths stay in the first quadrant.
Thus, the statement in (3) holds.
\end{proof}

\begin{remark}
\label{remark:SIII}
Recall that $S^{III}(n;1,0)$ in (3) of Proposition \ref{prop:SIII} counts 
generalized lattice paths. 
We write each step as $U'=(1,k)$, $H'=(2,k-1)$ and $D'=(1,-1)$.
Then, the minimal prime paths are $U'D'^{k}$ and $H'D'^{k-1}$.
In the case of $k$-Schr\"oder paths of type $B$, 
we have two minimal prime paths $UD^{k}$ and $HD^{k-1}$. 
Therefore, the lattice paths appearing in (3) are equivalent 
to $k$-Schr\"oder paths of type $B$. 
\end{remark}

\section{Open problems}
\label{sec:OP}
In section \ref{sec:Gdist2}, we consider a formal power series 
$\zeta(m,r,s)$ with $m=1$.
For a general value of $m$, the formal power series $\zeta(m,-r,s)$ 
is a positive power series with respect to $m,r$ and $s$.
Further, the number of terms with $r^{n}$ is 
equal to the number of $2$-Dyck paths. 
Thus, $\zeta(m,r,s)$ can be regarded as a refinement of $\zeta(m=1,r,s)$.

\paragraph{\bf Problem 1}
Find a statistics on $2$-Dyck paths whose generating function is equal
to $\zeta(m,r,s)$.

From Eq. (\ref{eq:zetam}), the power series $\zeta(m,r,s)$ satisfies 
the recurrence equation of order four.
The generating function of $2$-Dyck paths has degree three.
Further, as already shown in Lemma \ref{lemma:Gcdis2}, the generating function
$G_{n}(m,r,s)$ is the sum of two generating functions, one of 
which satisfies a recurrence equation.
These observation implies that $\zeta(m,r,s)$ should be expressed 
by use of at least two generating functions.
These generating functions may have degree two or three, and 
satisfy a combined recurrence equations.

We expand $\zeta(m,r,s)$ as $\zeta(m,r,s)=\sum_{n\ge 0}\zeta_{n}(m,s)(-rs)^{n}$.
Then, the first few polynomials $\zeta_{n}:=\zeta_{n}(m,s)$ with $n\ge0$ are given by
\begin{align*}
&\zeta_{0}=1, \qquad \zeta_{1}=m, \qquad \zeta_{2}=m^{2}+m(s+s^2), \\
&\zeta_{3}=m^{3}+m^2(2s+3s^2+s^4)+m(2s^3+s^4+s^5+s^6), \\
&\zeta_{4}=m^{4}+m^{3}(3s+5s^2+s^3+2s^4+s^{6})+m^{2}(s^2+6s^3+6s^4+7s^5+2s^6+3s^7+3s^8+s^{10}) \\
&\quad+m(4s^6+2s^7+s^8+3s^9+s^{10}+s^{11}+s^{12}).
\end{align*}
Recall that we have a bijection between a $2$-Dyck path $\mu$ and a ternary tree $T$.
Below, we identify $\mu$ with $T$.
A node in a ternary tree has three edges below it.
Given a ternary tree $T$, we denote by $N(T)$ one plus the number of internal edges 
which are the right-most edge of a parent node. 
Then, $N(T)$ takes the value in $[1,n]$. 
Let us define a polynomial $\zeta'(m,s)$ by 
\begin{align*}
\zeta'_{n}(m,s):=\sum_{T\in\mathtt{Tree}(n;k)}m^{N(T)}s^{|Y(\mu_{0}/\mu)|},
\end{align*}
where we have identified $\mu$ with $T$.
Then, we have $\zeta_{n}(m,s)=\zeta'_{n}(m,s)$ for $1\le n\le 3$. 
However, we have $\zeta_{n}(m,s)\neq\zeta'_{n}(m,s)$ for $n\ge4$.
The reason for this discrepancy is that $\zeta'_{n}(m,s)$ satisfies 
the recurrence equation of degree three, not four.
To obtain a correct solution for Eq. (\ref{eq:zetam}), we need to define 
a new statistics on a tree or a $2$-Dyck path, which coincides with 
$N(T)$ for $n\le3$. 

\paragraph{\bf Problem 2}
Generalize the solution of Problem 1 to the case of $k$-Dyck paths with $k\ge3$.

A natural generalization of $\zeta(m,r,s)$ is to consider the 
generating function of dimer configurations such that the distance of two 
edges with the same color is more than $k$.

\bibliographystyle{amsplainhyper} 
\bibliography{biblio}

\end{document}